\documentclass[a4paper,12pt,reqno,twoside]{amsart}

\usepackage[utf8]{inputenc} 		
\usepackage[T1]{fontenc} 

\usepackage{amssymb}
\usepackage{amsmath}
\usepackage{graphicx}
\usepackage[colorlinks,citecolor=blue,urlcolor=blue]{hyperref}
\usepackage{cite}
\usepackage[hmargin=2.5cm,vmargin=2.5cm]{geometry}
\usepackage{mathrsfs} 
\usepackage{ellipsis}
\usepackage{graphicx}
\usepackage{svg}

\usepackage{dirtytalk}

\addtolength\parskip {2mm}

\newcommand{\be}{\begin{eqnarray}}
\newcommand{\ee}{\end{eqnarray}}

\newcommand{\beq}{\begin{equation}}
\newcommand{\eeq}{\end{equation}}
\newcommand{\beqn}{\begin{equation*}}
\newcommand{\eeqn}{\end{equation*}}

\newcommand{\round}[1]{\lfloor#1\rfloor}


\newtheorem{thm}{Theorem}[section]

\newtheorem{lem}[thm]{Lemma}
\newtheorem{defn}[thm]{Definition}

\theoremstyle{remark}
\newtheorem{claim}[thm]{Claim}
\newtheorem{remark}[thm]{Remark}


\newcommand\cB{{\mathcal B}}
\newcommand\cC{{\mathcal C}}

\newcommand\cL{{\mathcal L}}

\newcommand\cN{{\mathcal N}}

\newcommand\cP{{\mathcal P}}

\newcommand\cR{{\mathcal R}}

\newcommand\cT{{\mathcal T}}


\newcommand\bR{{\mathbb R}}



\newcommand\fD{{\mathfrak D}}

\newcommand{\ve}{\varepsilon}



\begin{document}

\title[Linear response for
intermittent maps with critical point]{
  Linear response for \\
intermittent maps with critical point}


\author[Juho Lepp\"anen]{Juho Lepp\"anen}

\thanks{\textsc{Department of Mathematics, 
   Tokai University, Kanagawa, 259-1292, Japan}}
\thanks{ \textit{E-mail address}: leppanen@tsc.u-tokai.ac.jp}
\thanks{\textit{Date}:  \today }
\thanks{2020  \textit{Mathematics 
Subject Classification.} 37A05, 37E05} 
\thanks{\textit{Key words and phrases.} 
linear response, critical point, intermittent map, 
transfer operator.} 
\thanks{This research was supported by 
JSPS via the project LEADER.
The author is grateful to Alexey Korepanov, 
Yushi Nakano, and 
Mikko Stenlund for useful 
comments and suggestions,
and to the anonymous referees whose comments enhanced the quality of the manuscript, leading to improvements in the presentation and the main result.}
 





\begin{abstract} We consider a two-parameter 
    family of maps $T_{\alpha, \beta} 
    : [0,1] \to [0,1]$ with a neutral fixed 
    point and a non-flat critical point.
Building on a cone technique due to Baladi and Todd, 
we show that for a class of $L^q$ observables $\phi
: [0,1] \to \bR$
the bivariate map $(\alpha, \beta) \mapsto \int_0^1 
\phi \, d\mu_{\alpha,\beta}$, where 
$\mu_{\alpha, \beta}$ denotes the invariant
SRB measure, is differentiable 
in a certain parameter region, and establish 
a formula for its directional derivative. 
\end{abstract}

\maketitle



\section{Introduction}\label{sec:intro}

For $0 < \alpha 
< \tfrac{1}{\beta} \le 1$, 
the following two-parameter map 
$T_{\alpha, \beta} : [0,1] \to [0,1]$ was
considered by Cui \cite{cui2021invariant}:
\begin{align}\label{eq:cui_map}
    T_{\alpha, \beta}(x)
    = \begin{cases}
      x( 1 + 2^{\alpha} x^{\alpha} ) \quad x \in [0, \tfrac12), \\
      2^{\beta}( x - \tfrac12 )^{\beta}, 
      \quad x \in [\tfrac12, 1].
    \end{cases}
  \end{align}
The map has a neutral (or indifferent) 
fixed point at $x = 0$ 
and a non-flat critical point at $x = \tfrac12$, 
which strengthens the effect of the neutral 
fixed point on essential properties of the dynamics.
By \cite{cui2021invariant,inoue1992}, piecewise 
convex 
interval maps such as $T_{\alpha, \beta}$ 
have a unique absolutely continuous 
invariant probability measure $\mu_{\alpha, \beta}$
with density $h_{\alpha, \beta}(x)
\lesssim x^{ -1 + \frac{1}{\beta} - \alpha }$. 
We are going to study the problem of linear 
response for the family \eqref{eq:cui_map}, 
i.e. 
the differentiability of the map 
$(\alpha, \beta) \mapsto \int_0^1 \phi \, d \mu_{
    \alpha, \beta
}$, where $\phi \in L^q$ with $q$ sufficiently 
large.

Invariant measures play an important role
in describing the long-term 
statistical behavior
of dynamical systems, and understanding 
the robustness of these measures to 
perturbations is of both theoretical and practical interest.
Ruelle \cite{ruelle1998differentiation, 
ruelle2009review} 
showed that perturbations of uniformly hyperbolic Axiom A dynamical systems result in smooth changes to the associated SRB measure.
In this case, the 
derivative of the SRB measure can be 
expressed by a formula -- 
the linear response formula -- which 
explicitly 
describes the rate of change of the 
measure in response to the perturbation
based on statistical information 
from the unperturbed system.
Such ideas have found
applications outside mathematics, 
for example in the areas of climate 
science 
\cite{ragone2016new,
lucarini2011statistical
}
and neurophysiology
\cite{herfurth2017linear}. On the other hand, 
mathematical investigations of linear response have resulted in extensions of Ruelle's results to 
various scenarios where the system and its perturbations are sufficiently smooth and hyperbolic.
In 
\cite{dolgopyat2004differentiability, 
zhang2018smooth}, 
linear response 
formulas were established 
for perturbations of 
partially hyperbolic systems.
There has been deep investigation about 
response  
in the setting of unimodal maps 
\cite{baladi2021fractional,
baladi2007susceptibility,
sedro2021prethreshold,
aspenberg2022fractional,
ruelle2009structure
}, 
providing examples of systems where the  
SRB measure does not vary smoothly 
under perturbations.
Recently, linear response 
has also been actively studied in 
the context of random and extended dynamical 
systems; see, for instance  
\cite{crimmins2021spectral,
selley2021linear, koltai2019frechet, 
galatolo2019linear, 
dragicevic2023quenched}. For extensive background information on linear response, 
we recommend the survey \cite{baladi2014linear}.

Returning to the example \eqref{eq:cui_map}, 
this map can be viewed as 
an extension of the intermittent map 
popularized by Liverani, Saussol and Vaienti 
\cite{liverani1999probabilistic}, 
which is 
obtained 
from \eqref{eq:cui_map} by 
taking $\beta = 1$. 
Similar maps were studied numerically by 
Pomeau and Manneville \cite{pomeau1980intermittent} 
as models of 
intermittent
phenomena, such as
transitions to turbulence in convective fluids.
The map $T_{\alpha, 1}$ 
is uniformly expanding except at 
the neutral fixed point at the origin.
In this 
case, robustness of statistical properties 
have been investigated for both deterministic
\cite{korepanov2016linear,
baladi2016linear,selley2022differentiability,
bahsoun2016linear, freitas2009statistical, 
nisoli2022rigorous}
and random perturbations \cite{bahsoun2020linear}.
Independently, Baladi and Todd \cite{baladi2016linear} and Korepanov \cite{korepanov2016linear} analyzed the regularity properties of the map
$
\cR_{\phi}(\alpha) := \int_0^1 \phi \, d \mu_{\alpha,1}
$ for $\phi \in L^q$ with $q > 1$ sufficiently large, 
employing different methods.
In 
\cite{baladi2016linear}, a cone method was 
used to derive 
a linear response formula for the derivative of 
$
\cR_{\phi}(\alpha)$, 
while  \cite{korepanov2016linear} developed a 
a mechanism through inducing and coupling  
to show that $\cR_\phi$ is $C^2$ on compact subsets 
of $(0,1)$. The linear response 
formula of \cite{baladi2016linear} 
describes the derivative of $\cR_\phi(\alpha)$ 
in terms of the unperturbed system 
$T_{\alpha, 1}$ and $\phi$, together with $\partial_{\alpha}
T_{\alpha, 1}
$ and the SRB measure $\mu_{\alpha,1}$. In 
the present paper, we give a similar description 
of the derivative in the case of the two-parameter 
family \eqref{eq:cui_map}.

\begin{figure}[h!]
  \centering
      \includegraphics[width=1.0\textwidth]{
        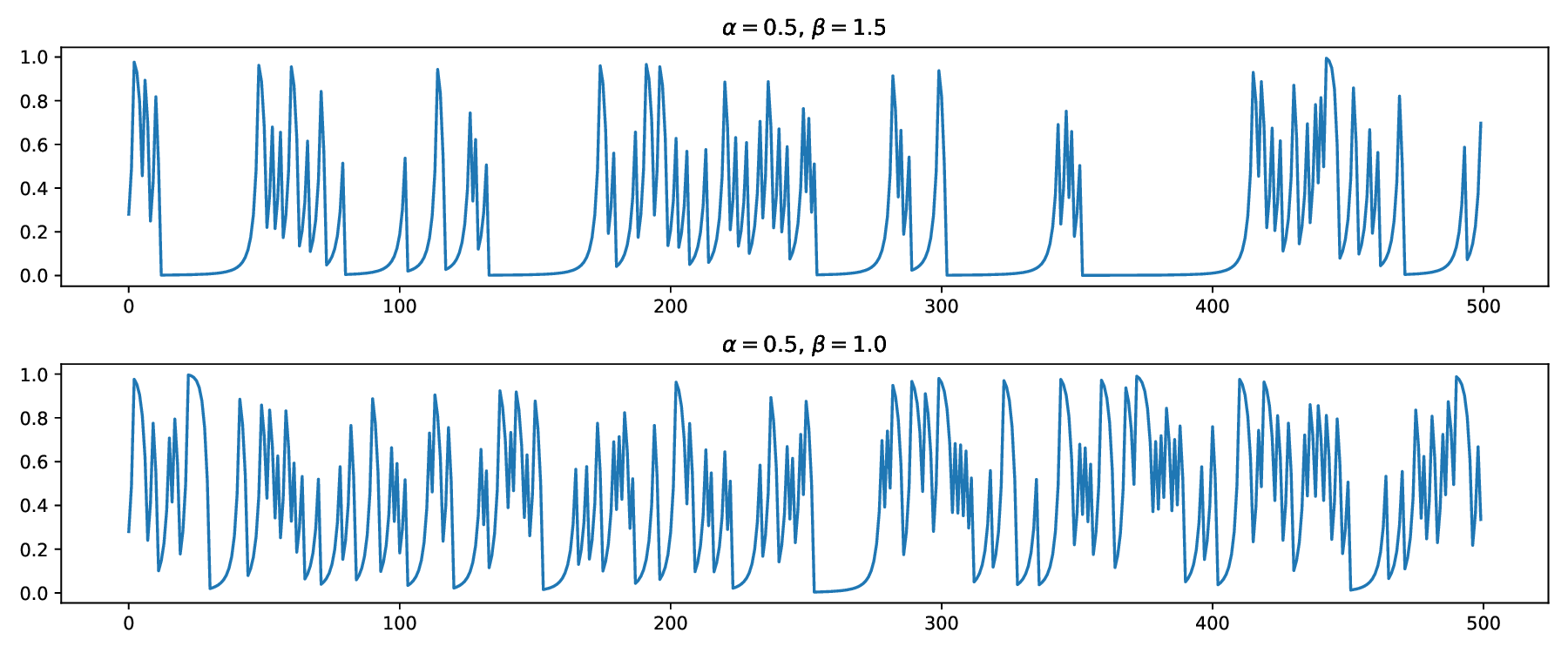
      }
  \caption{Intermittent behavior of 
  trajectories 
  $T_{\alpha,\beta}^n(x)$  
  for $(\alpha, \beta) = (0.5, 
  1.5)$ and $(\alpha, \beta) = (0.5, 
  1.0)$. The 
  latter case corresponds to 
  the standard Liverani--Saussol--Vaienti map 
  with parameter $\alpha = 0.5$. Once 
  the trajectory lands 
  near $\tfrac12$ in $(\tfrac12, 1]$, after 
  one iterate it will be in a small 
  neighborhood of $0$ and the larger 
  the 
  $\alpha\beta$ the longer it will take 
  for the trajectory to return to the 
  strongly chaotic region.}
\end{figure}

Statistical properties of piecewise convex 
interval maps, which admit both indifferent fixed points and critical points, have been previously 
investigated in \cite{cui2021invariant, 
coates2022doubly, inoue1992asymptotic,
inoue2023invariant, inoue1992
}. For the map \eqref{eq:cui_map}, 
Cui \cite{cui2021invariant} showed that 
correlations associated to $C^1$ observables 
decay at the rate $O(n^{ \frac{1}{\beta}( 
    1 - \frac{1}{\alpha\beta}
) } ( \log n )^{ 1 - \frac{1}{\beta}( 1 
- \frac{1}{\alpha\beta}) })$ based on 
the approach of Liverani, Saussol, and 
Vaienti \cite{liverani1999probabilistic}.
A sharper 
estimate of order $O(n^{  
    1 - \frac{1}{\alpha\beta}
 })$ for H\"{o}lder observables follows 
 by a recent result 
 due to Coates, Luzzatto, and 
 Muhammad \cite[Theorem D]{coates2022doubly}. 
 Their approach, based on Young's tower method 
 \cite{young1999recurrence}, applies to a wide range of full-branch 
 maps, and also yields advanced statistical 
 properties such as stable limit laws associated 
 to appropriately scaled Birkhoff sums. As 
 part of our proof of the linear response formula, 
 we derive the same rate $O(n^{  
    1 - \frac{1}{\alpha\beta}
 })$ for the map \eqref{eq:cui_map} by applying 
 a result due to Korepanov and 
 Lepp\"{a}nen \cite{korepanov2021loss}. 
 This application could be extended to 
 treat (random or deterministic) 
 time-dependent compositions 
 $T_{n} \circ 
 \cdots \circ T_{1}$ 
 of maps $T_{i}$ such as  
 \eqref{eq:cui_map}. In the recent work \cite{bahsoun2023mean}, the same 
 approach was used to establish decorrelation bounds in the context of 
 mean field coupled intermittent maps.

\subsection*{Notation and conventions}
In what follows, $C, 
C_0, C_1,\ldots$ denote 
positive constants whose 
values may chance from one 
line to the next. We use $C(a,b,\ldots)$ 
to indicate that the constant 
$C$ depends only on the parameters 
$a, b, \ldots$. Some other commonly used notation 
is listed below.

\begin{itemize}
  \item $L^q(\mu) := L^q([0,1], \cB, 
  \mu)$, where $\cB$ is the Borel sigma algebra 
  of $[0,1]$;\smallskip
  \item $m$ denotes the Lebesgue measure on $[0,1]$;\smallskip
  \item $\mu(f) := \int_0^1 f \, d\mu$ 
  for a measure $\mu$ 
  on $\cB$ and an integrable 
  function $f: [0,1] \to \bR$;\smallskip 
  \item $()'$ denotes differentiation with 
  respect to a spatial variable $x \in [0,1]$, 
  while $\partial_{i}$ for $i=1,2$ 
  denotes differentiation 
  with respect to the $i$th component of the 
  parameter vector $\gamma = (\alpha, \beta)$; \smallskip 
  \item $T_{\gamma} := T_{\alpha, \beta}$ for $\gamma = (\alpha, \beta)$ where 
  $0 < \alpha 
  < \tfrac{1}{\beta} \le 1$;
  \item $\cR_\phi(\gamma) = \cR_\phi(\alpha, \beta) := \int_0^1 \phi \, d \mu_\gamma$ where 
  $\phi \in L^1(\mu_\gamma)$, 
  $0 < \alpha 
  < \tfrac{1}{\beta} \le 1$, and $\mu_\gamma = \mu_{\alpha, \beta}$ denotes 
  the absolutely continuous invariant probability measure of $T_\gamma$. For $\beta < 1$ 
  we set $\cR_\phi(\alpha, \beta) = \cR_\phi(\alpha, 1)$.
\end{itemize}


\subsection{Setting and main result} 
For
$$ 
\gamma = (\alpha, \beta) \in 
\mathfrak{D} := \{  (\alpha, \beta) \in 
(0, 1) \times [1, \infty) \: : \: \alpha \beta < 1 \},
$$
let $f_{\gamma, 1} : [0, \tfrac12] \to [0,1]$
and $f_{\gamma, 
2} : [\tfrac12, 1] \to [0,1]$ denote 
respectively the left and 
right branch  
of $T_{\gamma}$:
\begin{align*}
  f_{\gamma, 1}(x) = x( 1 + 2^{\alpha} x^{\alpha} ),
  \qquad f_{\gamma, 2}(x) = 2^{\beta}( x - \tfrac12 )^{\beta}.
\end{align*}
Let
$g_{\gamma, 1} : [0, 1] \to [0,1/2]$
and $g_{\gamma, 2} : [0, 1] \to [1/2,1]$ denote their inverses: 
\begin{align*}
  g_{\gamma, 1} = f_{\gamma, 1}^{-1} \quad \text{and}
  \quad g_{\gamma, 2} = f_{\gamma, 2}^{-1}.
\end{align*} 
Note that 
\begin{align*}
  g_{\gamma, 2}(x) = \tfrac{1}{2} ( x^{\frac{1}{\beta}} + 1 ).
\end{align*}
Moreover, $g_{\gamma, 1}(x) \in [\tfrac{x}{2}, x]$ and 
\begin{align*}
  | g_{\gamma, 1}(x) - x(1 - 2^{\alpha} x^{\alpha} ) |
  \le 2^{2\alpha} x^{2\alpha + 1} \quad \forall x \in [0,1].
\end{align*}
Next, for $i = 1,2$ and $x \in [0,1]$, define $v_{\gamma, i}(x) = 
\partial_i T_{\gamma}(x)$ where $\partial_i T_{\gamma}(x)$ 
denotes the partial derivative of $T_\gamma(x)$ with respect to the  $i$th component of 
the parameter vector 
$\gamma = (\alpha, \beta)$. Then,
\begin{align*}
  v_{\gamma, 1}(x) = 2^{\alpha}x^{1 + \alpha} (\log 2 + \log x) \quad \forall x \in (0,\tfrac12]
\end{align*}
and
\begin{align*}
  v_{\gamma, 2}(x) = (\log 2 + \log( x - \tfrac12 )) g_{\beta,2}(x) \quad \forall x \in (\tfrac12, 1].
\end{align*} 
Finally, set $X_{\gamma,i} = 
v_{\gamma, i} \circ g_{\gamma, i}$.
Then, 
\begin{align*}
  X_{\gamma, 1}(x)
  &= 2^{\alpha}g_{\gamma, 1} (x)^{1 + \alpha} 
  (\log 2 + \log g_{\gamma, 1} (x))  \quad \forall x \in (0,1]
\end{align*}
and 
\begin{align*}
  X_{\gamma, 2}(x) = (\log 2 + \log( \tfrac12 x^{\frac{1}{\beta}} 
  ) ) x
  = \tfrac{1}{\beta} \log( x ) x \quad \forall x \in (0, 1].
\end{align*}

For $\varphi \in L^1( m)$ and $i \in \{1,2\}$, denote 
\begin{align*}
  \cN_{\gamma, i}(\varphi)(x) = g_{\gamma, i}'(x) 
  \cdot \varphi(g_{\gamma, 1}(x)),
\end{align*}
so that the transfer operator $\cL_{\gamma}$
associated to $T_{\gamma}$ and $m$ 
satisfies 
\begin{align*}
  \cL_{\gamma}(\varphi)  = \cN_{\gamma, 1}(\varphi)
  + \cN_{\gamma, 2}(\varphi).
\end{align*}

By \cite[Theorem 1.1]{cui2021invariant}, 
there exists a unique absolutely continuous 
$T_{\gamma}$-invariant probability 
measure $\mu_{\gamma} = \mu_{\alpha,\beta}$ whenever 
$\gamma = (\alpha, \beta) \in \mathfrak{D}$.
For $\phi \in L^1(\mu_\gamma)$ 
define $\cR_\phi : \mathfrak{D} \to \bR$ by 
$$
\cR_\phi(\gamma) = \int_0^1 \phi \,
d\mu_{\gamma}.
$$
We adopt 
the 
convention $\cR_\phi(\alpha, \beta) 
= \cR_\phi(\alpha, 1)$ if 
$\beta < 1$. 

With the above preparations, the main result in this 
paper can be stated as follows:

\begin{thm}[Linear response]\label{thm:main} Let
  $
  \gamma = (\alpha, \beta) \in \mathfrak{D}.
  $
    Suppose that 
    $\phi : [0,1] \to \bR$ is a measurable 
    function such that 
    \begin{align}\label{eq:lq_cond}
    \int_0^1 | \phi(x) |^q x^{\frac{1}{\beta} - 
    \alpha
    - 1} \, dx < \infty
    \end{align}
    holds for some $q > 
    \tfrac{1 }{
      1 - \alpha \beta}$. Then, for $i \in \{1,2\}$,
      \begin{align}\label{eq:claim_well_defined}
      \sum_{k=0}^{\infty} \int_0^1
      \phi \cdot \cL_{\gamma}^k[ ( 
        X_{\gamma, i} \cN_{\gamma,i}
      (h_{\gamma})  )'  ] \, dm
      \end{align}
      is absolutely summable. Moreover, 
$\cR_\phi$
  is differentiable at $\gamma$
  with directional derivative  
  \begin{align*}
    \nabla_{-v} \cR_\phi(\gamma) &:= \lim_{\delta\to 0}\frac{ 
      \int_0^1 \phi \, d\mu_{\gamma - \delta v}
      - \int_0^1 \phi \, d\mu_{\gamma } }{\delta}\\
    &=
    \sum_{k=0}^{\infty} v_1 \int_0^1
    \phi \cdot \cL_{\gamma}^k[ ( X_{\gamma, 1} 
    \cN_{\gamma,1}
    (h_{\gamma})  )'  ] \, dm\\
    &+ \sum_{k=0}^{\infty} v_2 \int_0^1
    \phi \cdot \cL_{\gamma}^k[ ( 
      X_{\gamma, 2} \cN_{\gamma,2}
    (h_{\gamma})  )'  ] \, dm,
    \end{align*}
    for any unit vector $v = (v_1, v_2) \in \bR^2$.
\end{thm}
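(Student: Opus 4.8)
The plan is to adapt the cone-contraction method of Baladi and Todd \cite{baladi2016linear} to the two-parameter family \eqref{eq:cui_map} and to the critical point. Throughout write $a = a(\gamma) = 1 - \tfrac1\beta + \alpha$, so that $0 < a < 1$ on $\mathfrak{D}$ and $h_\gamma(x) \lesssim x^{-a}$; note that the weight appearing in \eqref{eq:lq_cond} equals $x^{-a_u}$, where $a_u = 1 - \tfrac1{\beta_u} + \alpha_u$. First I would introduce a convex cone $\cC$ of non-negative, non-increasing densities on $[0,1]$ with a controlled singularity at the origin --- requiring, say, that $x \mapsto x^{a+1}\varphi(x)$ be non-decreasing together with a Lipschitz- or H\"older-type regularity condition away from $0$, in the spirit of the piecewise-convex framework of Inoue and of Liverani--Saussol--Vaienti --- and establish: (i) $\cL_\gamma \cC \subseteq \cC$ and $h_\gamma \in \cC$; (ii) Lasota--Yorke-type and bounded-distortion estimates that are uniform for $\gamma$ in compact subsets of $A$, together with uniform continuity of $\gamma \mapsto \cL_\gamma$ in the relevant operator topology; (iii) polynomial loss of memory at the rate $O(k^{\,1-\frac1{\alpha\beta}})$, which for the family \eqref{eq:cui_map} follows from the loss-of-memory result of Korepanov and Lepp\"anen \cite{korepanov2021loss}, as announced in the introduction.

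Next I would differentiate the transfer operator. Using $\partial_i g_{\gamma,i} = -X_{\gamma,i}\, g_{\gamma,i}'$, which one checks directly from the explicit formulas for $g_{\gamma,i}$ and $X_{\gamma,i}$, a short computation gives the twisted cohomological identity
\begin{align*}
  \partial_i \cN_{\gamma,i}(\varphi) = -\bigl( X_{\gamma,i}\,\cN_{\gamma,i}(\varphi) \bigr)', \qquad i \in \{1,2\},
\end{align*}
valid for sufficiently regular $\varphi$; since $\cN_{\gamma,1}$ depends only on $\alpha$ and $\cN_{\gamma,2}$ only on $\beta$, this yields $\partial_v \cL_\gamma \varphi = -\sum_{i=1}^{2} v_i \bigl( X_{\gamma,i}\cN_{\gamma,i}(\varphi) \bigr)'$, and in particular $m(\partial_v \cL_\gamma \varphi) = 0$. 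Differentiating formally the fixed-point relation $h_\gamma = \cL_\gamma h_\gamma$ then identifies the candidate tangent vector
\begin{align*}
  \eta_{\gamma,v} := -\sum_{k=0}^{\infty} \cL_\gamma^k\bigl[ \partial_v \cL_\gamma h_\gamma \bigr] = \sum_{i=1}^{2} v_i \sum_{k=0}^{\infty} \cL_\gamma^k\bigl[ (X_{\gamma,i}\cN_{\gamma,i}(h_\gamma))' \bigr].
\end{align*}
To make sense of this, and to prove the absolute summability \eqref{eq:claim_well_defined}, I would check that $(X_{\gamma,i}\cN_{\gamma,i}(h_\gamma))'$ is the derivative of a function that vanishes at the origin and lies in (or is suitably approximated within) the tangent space of $\cC$; the pairing of $\cL_\gamma^k$ applied to this zero-mean object with $\phi$ then decays polynomially by (iii), and the threshold $q > (1-\alpha_u\beta_u)^{-1}$ in the hypothesis is precisely what makes the resulting series converge absolutely once that decay estimate is combined, via H\"older's inequality, with the weighted integrability \eqref{eq:lq_cond} near $x=0$. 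The logarithmic factors in $X_{\gamma,1}, X_{\gamma,2}$ and the critical-point singularity at $x = \tfrac12$ must be accommodated here, but they only affect constants and borderline integrability exponents.

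Finally, for the differentiability itself I would first deduce from the uniform cone contraction of Steps (ii)--(iii) and the continuity of $\gamma \mapsto \cL_\gamma$ that $\gamma \mapsto h_\gamma$ is continuous --- indeed locally H\"older --- in the weak topology dual to the weighted $L^q$-space of \eqref{eq:lq_cond}, uniformly on compact subsets of $A$. Then, for $\gamma' = \gamma - \delta v$ with $\delta > 0$ small, I would use the resolvent identity
\begin{align*}
  h_{\gamma'} - h_\gamma = \sum_{k=0}^{\infty} \cL_{\gamma'}^k\bigl[ (\cL_{\gamma'} - \cL_\gamma) h_\gamma \bigr],
\end{align*}
whose right-hand side converges in the weak norm by the above (both $h_{\gamma'}-h_\gamma$ and $(\cL_{\gamma'}-\cL_\gamma)h_\gamma$ having zero mean), insert the expansions $\cL_{\gamma'} - \cL_\gamma = -\delta\,\partial_v \cL_\gamma + o(\delta)$ and $\cL_{\gamma'}^k = \cL_\gamma^k + O(\delta)$, and bound the remainder in the weak norm \emph{uniformly in $k$}, so that the $k$-sum of the remainders is $o(\delta)$. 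Dividing by $\delta$, letting $\delta \to 0$, and pairing with $\phi$, which by \eqref{eq:lq_cond} belongs to the predual of the weak space, gives $\nabla_{-v}\cR(\gamma) = m(\phi\,\eta_{\gamma,v})$, which is the stated formula. I expect this last step to be the main obstacle: one must control the difference quotient uniformly in the number of iterates in the simultaneous presence of the neutral fixed point --- which precludes a spectral gap and leaves only the polynomial decay of Step (iii) --- and of the critical point, which sharpens the singularity at $0$ and degrades regularity near $x = \tfrac12$. Concretely, the delicate points are (a) making the Lasota--Yorke constants and the decay rate uniform over a compact parameter neighborhood of $\gamma$; (b) showing that the tail $\sum_{k \geq N}$ of the difference-quotient series is small uniformly in $\delta$, which requires quantitative control of the $\gamma$-dependence of $\cL_\gamma^k[(X_{\gamma,i}\cN_{\gamma,i}(h_\gamma))']$; and (c) verifying that the source term $(X_{\gamma,i}\cN_{\gamma,i}(h_\gamma))'$ genuinely meets the tangent-space requirements despite the logarithmic and critical-point singularities, so that the weighted integrability hypothesis on $\phi$ closes every pairing in the argument.
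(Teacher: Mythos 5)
Your overall architecture matches the paper's: cone invariance, loss of memory via \cite{korepanov2021loss}, the identity $\partial_i\cN_{\gamma,i}(\varphi)=-(X_{\gamma,i}\cN_{\gamma,i}(\varphi))'$, and a perturbative expansion of $\cL_{\gamma'}-\cL_\gamma$ summed against iterates. However, there is a genuine gap in your summability argument for \eqref{eq:claim_well_defined}. You propose to deduce absolute convergence from the rate $O(k^{1-\frac{1}{\alpha\beta}})$ of step (iii) combined, via H\"older, with the hypothesis $q>(1-\alpha_u\beta_u)^{-1}$. This cannot work: $k^{1-\frac{1}{\alpha\beta}}$ is summable only when $\alpha\beta<\tfrac12$, and the H\"older/change-of-measure step \emph{costs} a power (the exponent gets divided by the conjugate $p>1$), it does not gain one; with $p<\tfrac{1}{\alpha_u\beta_u}$ one gets $\tfrac1p(\tfrac{1}{\alpha\beta}-1)<1$, so the series still diverges for most of $A$. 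The missing idea is that the source term $(X_{\gamma,i}\cN_{\gamma,i}(h_\gamma))'$ has a strictly \emph{milder} singularity at the origin than $h_\gamma$: it is $O(x^{\frac1\beta-1}\log(x^{-1}))$ rather than $O(x^{\frac1\beta-\alpha-1})$, so (after adding a multiple of $x^{\frac1\beta-\widetilde\delta-1}$) it decomposes as a difference of two elements of the cone $\cC_a(\widetilde\delta,\beta)$ with $\widetilde\delta$ arbitrarily small. Applying the loss-of-memory theorem to \emph{those} initial densities yields the faster rate $k^{-\gamma_*}$ with $\gamma_*=\min\{\tfrac{1}{\alpha_u}(\tfrac{1}{\beta_u}-\widetilde\delta),\tfrac{1}{\alpha_u\beta_u}\}>1$; the role of $q>(1-\alpha_u\beta_u)^{-1}$ is then only to ensure $\tfrac1p\cdot\tfrac{1}{\alpha_u\beta_u}>1$ so that summability survives the $1/p$ loss from interpolation. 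Without this observation the theorem would only be provable for $\alpha\beta<\tfrac12$.

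A second, related shortfall: you correctly flag the uniform-in-$k$ control of the difference quotient as ``the main obstacle,'' but you leave it unresolved, and this is where the paper's real work lies. In the $Q_k^{(i)}$ terms one must show $(X_{\gamma,i}\cN_{\gamma,i}(\cL_\gamma^{n}\mathbf{1}))'\to(X_{\gamma,i}\cN_{\gamma,i}(h_\gamma))'$ in $L^1(m)$, which requires bounding $\int_{b_{\ell+1}}^1|[\cL_\gamma^m(u)]'|\,dm$ for $u=\cL_\gamma^{k-j-m}(\mathbf{1}-h_\gamma)$. For $\beta>1$ the direct Baladi--Todd argument fails (the density's derivative grows like $\ell^{1+\frac1\alpha(1-\frac1\beta)}$ on $[b_\ell,1]$), and the paper needs the sharpened return-time distortion estimates of Lemma \ref{lem:distortion_returns} and the case analysis of Claims \ref{claim:control_e1}--\ref{claim:control_e2}. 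Your proposal contains no mechanism for this step, so as written it does not close.
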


\begin{remark}
If $\phi \in L^\infty(m)$, then 
	\eqref{eq:lq_cond} holds for all $q \ge 1$ 
	so 
	that $\cR_\phi$ is 
	differentiable on $\mathfrak{D}$.
	The first partial derivative 
	$\partial_1 \cR(\gamma)$ coincides with the 
	linear response formula established in 
	\cite{baladi2016linear}.
\end{remark}

\begin{remark} For a wide class of regular observables 
$\varphi_1, \varphi_2$ with $m(\varphi_1) = m(\varphi_2)$ we have 
$$
\Vert \cL_\gamma^n ( \varphi_1 - \varphi_2 ) \Vert_{L^1(m)} = O(  n^{1 - \frac{1}{\alpha\beta}} ) 
\quad \text{as $n \to \infty$},
$$
as established, for instance, by Lemma \ref{lem:memory_loss} in this paper. For $\varphi_1 = \mathbf{1}$ 
and $\varphi_2 = h_\gamma$ the rate is optimal if $\beta = 1$, and we guess 
its optimality for $\beta > 1$ as well. The rate is summable only if $\alpha\beta < 2$. However, the 
function $( X_{\gamma, i} 
\cN_{\gamma,i}
(h_{\gamma})  )'(x)$ appearing in \eqref{eq:claim_well_defined} grows
more slowly than $h_\gamma(x)$ as $x$ tends to $0$, 
which will be used 
to establish $ \sum_{k=0}^\infty \Vert \cL_{\gamma}^k[ ( 
  X_{\gamma, i} \cN_{\gamma,2}
(h_{\gamma})  )' \Vert_{L^1(m)} < \infty$ through  
an application of \cite[Theorem 3.8]{korepanov2021loss}.
\end{remark}

\begin{remark}
If $\gamma = (0, \beta)$ with $\beta \ge 1$, it can be seen by the techniques of \cite{cui2021invariant} that the map $T_{\gamma} = T_{0, \beta}$ 
still has an invariant measure $\mu_\gamma$ with density $h_\gamma$ belonging to the cone 
$\cC_a(\gamma) = \cC_a(0, \beta)$, defined in Section 
\ref{sec:memory_loss}.  
In this scenario, the linear response formula continues to hold, but the proof in Section \ref{sec:proof_main} necessitates modifications. We provide a concise outline of these necessary adjustments.
For
$\gamma = (0, \beta)$, the tail bounds in Lemmas 2.4 and 2.8 decay exponentially, 
implying an exponential decay rate of memory loss in \eqref{eq:ml_weak} through \cite[Theorem 3.8]{korepanov2021loss}.
Moreover, for such $\gamma$ the upper bound \eqref{eq:dist_first} decays exponentially, and the results in Section 
\ref{sec:cone} continue to hold.
Then, 
the main issue in repeating the steps in the proof of Theorem \ref{thm:main} is 
that,
for $\alpha > 0$, the constant in \eqref{eq:ml_strong} becomes large as $\alpha \searrow 0$. 
However, similar to \cite{baladi2016linear}, this issue can be circumvented  
by writing 
$\sum_{j=0}^{k-1} \cL_{\gamma+\delta}^j ( 
\cL_{\gamma + \delta} - \cL_\gamma )
\cL_{\gamma}^{k-j-1} ( \mathbf{1} ) = - \sum_{j=0}^{k-1} \cL_{\gamma}^j ( 
 \cL_\gamma  - \cL_{\gamma + \delta} )
\cL_{\gamma + \delta}^{k-j-1} ( \mathbf{1} )$ in \eqref{eq:decomp_diff_int}. This decomposition 
together with the previously mentioned properties can be employed to derive the linear response formula.
\end{remark}

The proof of Theorem \ref{thm:main} is based 
on the approach of \cite{baladi2016linear}, 
but there are a number of 
subtle differences, 
three 
of which are 
highlighted below:

\begin{itemize}
  \item[(1)] We rely on results from 
  \cite{korepanov2021loss} to obtain 
  polynomially decaying 
  upper bounds on 
  $\Vert \cL_{\gamma}^n( \phi - \psi ) 
  \Vert_{L^1(m)}$ that are uniform in 
  $\gamma \in K$ for compact $K \subset \mathfrak{D}$, 
  provided that 
  $\phi$ and 
  $\psi$ are sufficiently regular densities. 
  We extend these $L^1$ bounds to $L^q$ bounds 
  using a change of measure combined with interpolation, employing techniques in the spirit of 
   \cite{nicol2021large, bunimovich2023maximal, su2022vector}.
   The $L^q$ bounds are used to obtain the linear response formula for  
  $\phi$ that satisfies \eqref{eq:lq_cond}. 
  \smallskip 
  \item[(2)] We avoid  
  using the cone $\cC_{*, 1}$ constructed in 
  \cite{baladi2016linear} and lower 
  bounds on $h_\gamma$ 
  by decomposing regular functions 
  into linear combinations of functions 
  in the cone from \cite{cui2021invariant} 
  (see the definition of $\cC_a(\gamma)$ in 
  Section \ref{sec:memory_loss}).
 \smallskip
 \item[(3)] For $\beta = 1$, the linear 
 growth of  
 $\int_{b_\ell}^1  | h'_{\alpha, 1}(x) | \, dx 
 \lesssim \ell$ combined with 
 distortion estimates were used in 
 \cite{baladi2016linear} 
 to establish 
 \begin{align}\label{eq:issue_estimate}
  \lim_{k\to\infty} \Vert \mathbf{1}_{ 
    \{x > b_\ell\}} 
 [ \cL_{\alpha,1}^k( \mathbf{1} - h_{\alpha, 1} ) ]'
 \Vert_{L^1(m)} = 0,  
 \end{align}
 where $b_\ell$ 
 denotes the preimage of $\tfrac12$ under
 $(T_{\alpha, \beta} |_{[0, \frac12]})^\ell$.
 For $\beta > 1$, we have 
 $\int_{b_\ell}^1 | h'_{\alpha, \beta}(x) | \, dx 
 \lesssim \ell^{1 + \frac{1}{\alpha}(1 - 
 \frac{1}{\beta})}$, which produces a 
 large term if $\alpha$ is small, and 
 because of this we 
 are unable to recover \eqref{eq:issue_estimate} 
 by a direct application of the argument from 
 \cite{baladi2016linear}, but instead we have  
 to develop some additional technical machinery.
\end{itemize}

Regarding (1), the method of \cite{cui2021invariant, 
liverani1999probabilistic} could be used 
instead to control $\Vert \cL_{\gamma}^n( \phi - \psi ) 
\Vert_{L^1(m)}$, but this is sufficient for 
the linear response 
formula only for parameters $(\alpha,\beta) \in \mathfrak{D}$ with 
$\alpha\beta < \tfrac{1}{1 + \beta}$
as is seen from \cite[Theorem 4.4]{
  cui2021invariant}. For parameters $\alpha\beta < 2$, 
  another option would 
  be to use the method from \cite{coates2022doubly} 
instead of \cite{korepanov2021loss}, but 
this would require a careful analysis 
of the dependence of the constants 
with respect to $\gamma$
in the upper bounds established in 
\cite{coates2022doubly}. Yet another 
plausible approach in the case $\alpha\beta < 2$ would proceed 
by an implementation of the explicit 
coupling method from 
\cite{korepanov2019explicit}. For 
$\alpha\beta < 1$, we believe that 
the approach from \cite{sarig2002subexponential,
gouezel2004sharp}
based on 
operator renewal theory could also be 
succesfully adapted.

We submit (1) and (2) above to have 
some merit if one is interested in 
deriving response formulas for non-autonomous 
or random dynamical systems. While we believe 
that the approach in this paper could be 
extended to obtain annealed or quenched 
linear response formulas associated with  
random compositions of maps 
drawn from the family \eqref{eq:cui_map}, 
we leave such extensions for future exploration.

Finally, we mention that, while we only consider 
the concrete example \eqref{eq:cui_map}, it 
can be seen from the proof that some 
generalizations are possible. For example, 
the method would apply in the setting of  
interval maps having finitely many 
full branches of the form described in  
\cite[p. 405]{cui2021invariant}.

\subsection*{Organization of the paper}
In Section \ref{sec:memory_loss} we establish 
results on the rate of memory loss and 
other technical estimates that are 
needed in the subsequent 
sections. In Section \ref{sec:cone} we 
discuss properties of the convex cones 
introduced in \cite{baladi2016linear,
cui2021invariant} and show that the 
partial derivatives given in 
Theorem \ref{thm:main} are well-defined. 
In Section \ref{sec:regularity} we study the parameter 
dependence of the transfer operator $\cL_\gamma$.
Section \ref{sec:proof_main} 
is devoted to the proof of the main 
result, Theorem 
\ref{thm:main}, for observables 
$\phi \in L^\infty$. There are two appendices: 
Appendix \ref{appendix:a} 
contains the proof of Lemma \ref{lem:cone_in_c3} 
about the invariance 
of a cone involving higher order derivatives.
In Appendix \ref{appendix:b} we explain how 
the the proof in Section \ref{sec:proof_main}
can be modified to obtain the main result 
in the case of observables $\phi \in L^q$.

\section{Loss of memory and distortion bounds}\label{sec:memory_loss}

Recall that  
$$
\mathfrak{D} = \{ 
(\alpha, \beta) \in 
(0, 1) \times [1, \infty)
\: : \: 
\alpha \beta < 1  
\}.
$$
Throughout this section we consider a fixed 
subset 
$$
B = [\alpha_\ell, \alpha_u] \times 
[1, \beta_u] \subset \mathfrak{D}.
$$
On $\mathfrak{D}$
we define the natural partial order 
$$
(\alpha_1, \beta_2) \le (\alpha_2, \beta_2) 
\iff \alpha_1 \le \alpha_2 \quad \text{and}
\quad 
\beta_1 \le \beta_2.
$$
Following \cite{cui2021invariant}, define 
the convex cone 
\beqn
\begin{split}
\cC_a(\gamma) 
= \{f\in L^1([0, 1]) \,:\, & \text{$f\ge 0$, $f$ decreasing,} 
\\
& \text{$x^{\alpha+1}f$ increasing, 
$ \int_0^x f(t) \, dt 
\le a x^{\frac{1}{\beta} - \alpha } m(f)$}\},
\end{split}
\eeqn
whenever $\gamma = (\alpha, \beta) \in \mathfrak{D}$. 
Denote 
by $h_\gamma$ the density of the invariant 
measure $\mu_\gamma$. It follows 
from \cite{cui2021invariant} that if 
$$
a \ge a_0(\gamma) := 
\frac{2^{\beta + 1 } ( 1 + 2^\alpha)^{ 1 + \alpha - \frac{1}{\beta} } }{
    \frac{1}{\beta} - \alpha
}, 
$$
then 
there exists $b > 0$ determined 
by $a$ and $\gamma$ such that 
\begin{align}\label{eq:cone_lb}
\cL_\gamma \cC_a(\gamma) \subset \cC_a(\gamma)  \quad 
\text{and} \quad 
\varphi \ge b  \quad \forall \varphi \in \cC_a(\gamma).
\end{align}
In particular, $h_\gamma \in \cC_a(\gamma)$ (see
\cite[p. 409]{cui2021invariant}).
Note that the cone $\cC_a(\gamma)$ is increasing in $a$, and that 
$$
\gamma_1 \le \gamma_2 \implies 
a_0(\gamma_1) \le a_0(\gamma_2) \quad \text{and} \quad 
\cC_a(\gamma_1) \subset \cC_a(\gamma_2).
$$

We can slightly generalize the invariance property of 
$\cC_a(\gamma)$ stated above:

\begin{lem}\label{lem:inv_of_lsv_cone} For all 
    $\gamma = (\alpha,\beta) 
    \in \mathfrak{D}$ with $\gamma \le 
\gamma_u = (\alpha_u, \beta_u)$, 
$$
\cL_\gamma \cC_a(\gamma_u) \subset 
\cC_a(\gamma_u)
$$
holds if $a \ge a_0(\gamma_u)$.
\end{lem}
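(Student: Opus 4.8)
The plan is to reduce the claimed inclusion $\cL_\gamma \cC_a(\gamma_u) \subset \cC_a(\gamma_u)$ for $\gamma \le \gamma_u$ to the already-quoted invariance $\cL_{\gamma_u}\cC_a(\gamma_u) \subset \cC_a(\gamma_u)$ together with the monotonicity remarks recorded just before the lemma. The key observation is that $\cC_a(\gamma_u)$ is, by definition, a set of functions whose defining constraints (nonnegativity, monotonicity of $f$, monotonicity of $x^{\alpha_u+1}f$, and the integral bound with exponent $\tfrac1{\beta_u}-\alpha_u$) depend only on the parameter $\gamma_u$, not on $\gamma$. So all that must be checked is that the image $\cL_\gamma f$ of an $f \in \cC_a(\gamma_u)$ still satisfies these four constraints.

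Concretely, I would argue as follows. Fix $f \in \cC_a(\gamma_u)$. First, since $\alpha \le \alpha_u$, an $f$ for which $x^{\alpha_u+1}f$ is increasing also has $x^{\alpha+1}f$ increasing (write $x^{\alpha_u+1}f = x^{\alpha_u - \alpha}\cdot x^{\alpha+1}f$ and note that a product of two nonnegative increasing functions is increasing, so... — actually the implication goes the other way, so one must be slightly careful). The cleaner route: observe that $f \in \cC_a(\gamma_u)$ implies $f \in \cC_a(\gamma)$ is \emph{not} automatic, so instead I directly verify that $\cL_\gamma$ preserves each of the four conditions \emph{evaluated at the parameter $\gamma_u$}. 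Nonnegativity and the decreasing property of $\cL_\gamma f$ follow exactly as in \cite{cui2021invariant}, since $\cL_\gamma$ is built from the same two inverse branches $g_{\gamma,1}, g_{\gamma,2}$ and the proof of decrease there uses only convexity of the branches, which holds for every $\gamma \in \mathfrak{D}$. The condition that $x^{\alpha_u+1}(\cL_\gamma f)$ is increasing, and the integral bound $\int_0^x \cL_\gamma f \le a x^{\frac1{\beta_u}-\alpha_u} m(\cL_\gamma f) = a x^{\frac1{\beta_u}-\alpha_u} m(f)$, are the two substantive points; here I would follow the computation in \cite{cui2021invariant} verbatim, but at each place where that argument invokes a parameter I would track whether the relevant inequality is monotone in $\alpha$ and in $\beta$, so that the bound obtained at $\gamma \le \gamma_u$ is dominated by the corresponding bound at $\gamma_u$. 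The constant $a_0(\gamma_u)$ is designed precisely so that the worst case among $\gamma \le \gamma_u$ is covered, which is why the hypothesis is $a \ge a_0(\gamma_u)$ rather than $a \ge a_0(\gamma)$; this is consistent with the displayed monotonicity $\gamma_1 \le \gamma_2 \implies a_0(\gamma_1) \le a_0(\gamma_2)$.

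The main obstacle I anticipate is the integral-bound condition. In \cite{cui2021invariant} the estimate $\int_0^x \cL_\gamma f \le a_0(\gamma) x^{\frac1\beta - \alpha} m(f)$ is obtained by splitting $\cL_\gamma f = \cN_{\gamma,1}f + \cN_{\gamma,2}f$, changing variables under each branch, and using that $g_{\gamma,1}(x) \in [x/2, x]$ together with the explicit form of $g_{\gamma,2}$; the exponent $\tfrac1\beta - \alpha$ and the specific constant emerge from matching powers of $x$ near the neutral fixed point. To get the version with exponent $\tfrac1{\beta_u}-\alpha_u$ one must compare $x^{\frac1\beta-\alpha}$ with $x^{\frac1{\beta_u}-\alpha_u}$ for $x \in [0,1]$: since $\tfrac1{\beta_u} - \alpha_u \le \tfrac1\beta - \alpha$ whenever $\gamma \le \gamma_u$, and $x \le 1$, one has $x^{\frac1\beta - \alpha} \le x^{\frac1{\beta_u} - \alpha_u}$, so the bound with the smaller exponent is the \emph{weaker} one and hence follows. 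Combining this with $a_0(\gamma) \le a_0(\gamma_u) \le a$ closes the estimate. The increasing-ness of $x^{\alpha_u+1}\cL_\gamma f$ should follow from the corresponding argument in \cite{cui2021invariant} applied with exponent $\alpha_u$ in place of $\alpha$, using that $f \in \cC_a(\gamma_u)$ already supplies $x^{\alpha_u+1}f$ increasing (not merely $x^{\alpha+1}f$), together with $\alpha \le \alpha_u$ wherever the branch derivatives enter. Once all four conditions are verified, $\cL_\gamma f \in \cC_a(\gamma_u)$, which is the claim.

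Finally, for bookkeeping I would record explicitly the two monotonicity facts used: $x \mapsto x^s$ is decreasing in $s$ on $(0,1]$, and the map $\gamma \mapsto a_0(\gamma)$ is increasing; both were already noted in the excerpt, so the proof of Lemma \ref{lem:inv_of_lsv_cone} amounts to little more than rereading the invariance proof in \cite{cui2021invariant} with these substitutions in mind. I expect the write-up to be short, essentially ``repeat the argument of \cite[p.~409]{cui2021invariant}, replacing $\alpha$ by $\alpha_u$ and $\beta$ by $\beta_u$ in the cone conditions and using $\gamma \le \gamma_u$ to dominate the intermediate estimates,'' with the integral bound being the one place where the direction of the inequality genuinely needs checking.
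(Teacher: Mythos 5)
Your treatment of the first three cone conditions is fine and matches the paper's (which simply records them as easy). The issue is the integral bound, where your plan as written is circular. You correctly observe early on that $f \in \cC_a(\gamma_u)$ does \emph{not} imply $f \in \cC_a(\gamma)$, yet your argument for the integral condition is to first obtain $\int_0^x \cL_\gamma f \le a\, x^{\frac1\beta - \alpha} m(f)$ ``by following Cui verbatim'' and then weaken the exponent using $x^{\frac1\beta-\alpha} \le x^{\frac1{\beta_u}-\alpha_u}$. But Cui's computation of that bound feeds the hypothesis $\int_0^y f \le a\, y^{\frac1\beta-\alpha} m(f)$ back into the branch-by-branch estimate, and your $f$ only satisfies the weaker bound with exponent $\frac1{\beta_u}-\alpha_u$. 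So the bound with exponent $\frac1\beta-\alpha$ is not available to you, and the ``weaken the exponent afterwards'' step never gets off the ground. One could in principle redo Cui's computation with the $\gamma_u$-cone hypotheses as input and the operator $\cL_\gamma$, checking that the gain from the first branch still absorbs the second-branch contribution when $a \ge a_0(\gamma_u)$; but you have not done this, and your explicit monotonicity check points at the wrong inequality.

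The paper avoids all of this with a one-line comparison that is the idea missing from your proposal: since $\gamma \le \gamma_u$ gives $T_\gamma \ge T_{\gamma_u}$ pointwise (both $(2x)^\alpha$ on $[0,\tfrac12]$ and $(2(x-\tfrac12))^\beta$ on $[\tfrac12,1]$ are decreasing in the exponent because their bases lie in $[0,1]$), one has $T_\gamma^{-1}[0,x] \subset T_{\gamma_u}^{-1}[0,x]$, hence
\begin{equation*}
\int_0^x \cL_\gamma \varphi \, dt
= \int_{T_\gamma^{-1}[0,x]} \varphi \, dt
\le \int_{T_{\gamma_u}^{-1}[0,x]} \varphi \, dt
= \int_0^x \cL_{\gamma_u}\varphi \, dt
\le a\, m(\varphi)\, x^{\frac{1}{\beta_u}-\alpha_u},
\end{equation*}
the last step being exactly the already-known invariance of $\cC_a(\gamma_u)$ under $\cL_{\gamma_u}$ for $a \ge a_0(\gamma_u)$. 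This reduces the only substantive condition to the $\gamma = \gamma_u$ case and requires no re-examination of Cui's constants. You should replace the integral-bound portion of your argument with this comparison (or else actually carry out the re-derivation you sketch, with the $\gamma_u$-cone hypotheses as input throughout).
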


\begin{proof} Let 
  $\varphi \in \cC_a(\gamma_u)$. 
  It is easy to see that  
  $\cL_\gamma \varphi \ge 0$,  
  $\cL_\gamma \varphi$ is decreasing, 
  and 
  $x^{\alpha_u + 1} \cL_\gamma \varphi(x) $ 
  is increasing. Next, note that 
  $T_{\gamma}^{-1}[0,x] \subset 
  T_{\gamma_u}^{-1}[0,x]$ 
  holds for all $x \in [0,1]$. Hence, 
  $$
  \int_0^x \cL_\gamma \varphi(t) \, dt
  = \int_{T^{-1}_\gamma[0,x]} \, \varphi(t) \, dt
  \le \int_{ T_{ \gamma_{{}_u} }^{-1} [0,x]  } \, 
  \varphi(t) \, dt
  = \int_{0}^x \, 
  \cL_{\gamma_u} \varphi(t) \, dt 
  \le a m(\varphi) x^{ \frac{1}{\beta_u} - \alpha_u },
  $$
  provided that 
  $a \ge a_0(\gamma_u)$.
\end{proof}

For all $n \ge 0$, define 
$$
b_n = f_{\gamma, 1}^{-n}( \tfrac12 ).
$$
Moreover, let 
$\hat{b}_0 = 1$ and for $n \ge 1$ let $\hat{b}_n 
\in [\tfrac12, 1]$ be such that 
$$
T_\gamma ( \hat{b}_n ) = f_{\gamma, 1}(b_n).
$$
Then, for every $n \ge 0$, 
$T_\gamma^{n+1}$ maps both $(b_{n+1}, b_n)$ 
and $(\hat{b}_{n+1}, \hat{b}_n)$ bijectively 
onto $(\tfrac12,1)$, and 
\begin{align}\label{eq:b_n_rel}
\hat{b}_{n+1} - \tfrac12 = \tfrac{1}{2} b_{n}^{
  \frac{1}{\beta}
}.
\end{align}


The following bounds on $b_n$ and $\hat{b}_n$
are immediate consequences of Lemma 5.2 
in \cite{korepanov2016linear}
combined with \eqref{eq:b_n_rel}:

\begin{lem}\label{lem:bn_bound} For all $n \ge 1$,
    \begin{align*}
    \biggl[ \frac{
      1
    }{2^\alpha + n \alpha
    2^{ \alpha - 1 }
    } \biggr]^{ \frac{1}{\alpha} }
    \le 
    b_n \le \biggl[ \frac{
      1
    }{2^\alpha + n \alpha (1 - \alpha )
    2^{ \alpha - 1 }
    } \biggr]^{ \frac{1}{\alpha} }
    \end{align*}
    and
    \begin{align*}
    \tfrac12 \biggl[ \frac{
      1
    }{2^\alpha + n \alpha
    2^{ \alpha - 1 }
    } \biggr]^{  \frac{1}{\alpha\beta} }
    \le 
    \hat{b}_{n+1} - \tfrac12 \le 
    \tfrac12 \biggl[ \frac{
      1
    }{2^\alpha + n \alpha (1 - \alpha )
    2^{ \alpha - 1 }
    } \biggr]^{ \frac{1}{\alpha\beta}}.
    \end{align*}
    Moreover, there exist $C_i = C_i(\alpha, \beta) 
    > 0$ such that 
    \begin{align*}
      C_1 n^{- \frac{1}{\alpha} - 1}  \le 
       b_{n} - b_{n+1} \le C_2 n^{- \frac{1}{\alpha} 
    - 1 } \quad \text{and} \quad 
    C_3 
    n^{ - \frac{1}{\alpha\beta} - 1   } \le 
    \hat{b}_{n} - \hat{b}_{n+1}  \le C_4 
    n^{ - \frac{1}{\alpha\beta} - 1   }.
    \end{align*}
\end{lem}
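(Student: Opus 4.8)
The plan is to treat the sequence $(b_n)$ first, since all other estimates follow from it via the algebraic relation \eqref{eq:b_n_rel}. For $(b_n)$, I would invoke Lemma 5.2 of \cite{korepanov2016linear} directly: the left branch $f_{\gamma,1}(x) = x(1 + 2^\alpha x^\alpha)$ has exactly the form to which that lemma applies, with the neutral fixed point at the origin and leading behaviour $f_{\gamma,1}(x) = x + 2^\alpha x^{1+\alpha} + \cdots$ near $0$. The backward orbit $b_n = f_{\gamma,1}^{-n}(\tfrac12)$ decreases monotonically to $0$, and the cited lemma gives precisely the two-sided bound
\begin{align*}
  \Bigl[ \tfrac{1}{2^\alpha + n\alpha 2^{\alpha-1}} \Bigr]^{1/\alpha}
  \le b_n \le
  \Bigl[ \tfrac{1}{2^\alpha + n\alpha(1-\alpha)2^{\alpha-1}} \Bigr]^{1/\alpha};
\end{align*}
here the numerator normalisation comes from $b_1 = f_{\gamma,1}^{-1}(\tfrac12)$ together with the coefficient $2^\alpha$ of the nonlinear term. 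I would then read off the second pair of displayed inequalities for $\hat b_{n+1} - \tfrac12$ by substituting the bounds for $b_n$ into \eqref{eq:b_n_rel}, which says $\hat b_{n+1} - \tfrac12 = \tfrac12 b_n^{1/\beta}$; raising the bracketed expressions to the power $\tfrac{1}{\alpha\beta}$ is immediate.

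For the increment estimates, I would use the mean value theorem on $f_{\gamma,1}^{-1}$, or equivalently write $b_n - b_{n+1} = b_n - f_{\gamma,1}^{-1}(b_n)$ and use $f_{\gamma,1}(b_{n+1}) = b_n$ to get $b_n - b_{n+1} = f_{\gamma,1}(b_{n+1}) - b_{n+1} = 2^\alpha b_{n+1}^{1+\alpha}$ exactly. Plugging in the two-sided bound $b_{n+1} \asymp n^{-1/\alpha}$ just obtained yields $b_n - b_{n+1} \asymp (n^{-1/\alpha})^{1+\alpha} = n^{-1/\alpha - 1}$, with constants $C_1, C_2$ depending only on $\alpha$ (and, through the normalisation, harmlessly on the compact parameter range — but in fact only on $\alpha$). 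For the $\hat b_n$ increments I would again apply \eqref{eq:b_n_rel}: $\hat b_n - \hat b_{n+1} = \tfrac12(b_{n-1}^{1/\beta} - b_n^{1/\beta})$, and then use the elementary inequality that for $0 < y < x \le 1$ and $p = \tfrac1\beta \in (0,1]$ one has $p\, x^{p-1}(x-y) \le x^p - y^p \le p\, y^{p-1}(x-y)$, combined with $b_n \asymp n^{-1/\alpha}$ and $b_{n-1} - b_n \asymp n^{-1/\alpha-1}$. This gives $\hat b_n - \hat b_{n+1} \asymp n^{-(1/\alpha)(1/\beta - 1) \cdot (-1)} \cdot n^{-1/\alpha-1}$... more carefully, $x^{p-1} \asymp (n^{-1/\alpha})^{1/\beta - 1} = n^{(1/\alpha)(1 - 1/\beta)}$, so the product is $n^{(1/\alpha)(1-1/\beta) - 1/\alpha - 1} = n^{-1/(\alpha\beta) - 1}$, as claimed.

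The only mild subtlety — and the thing I would be most careful about — is the uniformity and explicit form of the constants in the first displayed bound. Lemma 5.2 of \cite{korepanov2016linear} is stated for the LSV-type left branch, and one must check that its hypotheses are met by $f_{\gamma,1}$ for every $\gamma \in B$ and that the constants $2^\alpha$, $\alpha$, $1-\alpha$ appearing in the brackets are exactly those produced by that lemma's normalisation rather than only up to an unspecified multiplicative factor; this is what makes the statement quotable verbatim here. Everything downstream is then purely elementary manipulation of the relation \eqref{eq:b_n_rel} and the convexity/concavity of $t \mapsto t^{1/\beta}$, so no further obstacle arises.
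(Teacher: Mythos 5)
Your proposal is correct and follows essentially the same route as the paper, which simply declares all four bounds to be immediate consequences of Lemma 5.2 in \cite{korepanov2016linear} combined with \eqref{eq:b_n_rel}. Your explicit derivations of the increment estimates (the exact identity $b_n - b_{n+1} = 2^{\alpha} b_{n+1}^{1+\alpha}$ and the concavity bound for $t \mapsto t^{1/\beta}$ applied to $\hat b_n - \hat b_{n+1} = \tfrac12 (b_{n-1}^{1/\beta} - b_n^{1/\beta})$) are valid and in fact supply details the paper leaves implicit.
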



\begin{lem}\label{lem:distortion} 
    There exist $C_i > 0$ such that, 
    for all 
    $\gamma \in \mathfrak{D}$ and all $n \ge 1$,
    \begin{align}\label{eq:distortion_1}
    | \log (T^n_\gamma)'y - 
    \log (T^n_\gamma)'x |
    \le C_1  |  T^n_\gamma(y) 
    - T^n_\gamma(x) |  \quad 
    \forall x,y \in (b_{n}, b_{n-1})
    \end{align}
    and 
    \begin{align}\label{eq:distortion_2}
        | \log (T^n_\gamma)'y - 
        \log (T^n_\gamma)'x |
        \le C_2  |  T^n_\gamma(y) 
        - T^n_\gamma(x) | \quad 
    \forall x,y \in (\hat{b}_{n}, \hat{b}_{n-1}).
    \end{align}

\end{lem}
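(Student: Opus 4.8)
The plan is to establish both estimates by one telescoping argument along the orbit of $T_\gamma$, handling the left-branch iterates uniformly and isolating the single right-branch step that occurs in \eqref{eq:distortion_2}.

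For \eqref{eq:distortion_1}, fix $x,y\in(b_n,b_{n-1})$; there $T^n_\gamma=f_{\gamma,1}^n$ and $T^i_\gamma$ maps the interval onto $(b_{n-i},b_{n-i-1})$, so writing $\log(T^n_\gamma)'y-\log(T^n_\gamma)'x=\sum_{l=0}^{n-1}\bigl[\log f'_{\gamma,1}(T^l_\gamma y)-\log f'_{\gamma,1}(T^l_\gamma x)\bigr]$ and re-indexing by $m=n-l$ puts $T^l_\gamma x,T^l_\gamma y$ into the cylinder $(b_m,b_{m-1})$. I would bound the $m$-th term in three moves. (i) Because $f'_{\gamma,1}(u)=1+(1+\alpha)2^\alpha u^\alpha\ge1$, the inequality $|\log a-\log b|\le|a-b|/\min(a,b)$ together with the mean value theorem bounds the term by $(1+\alpha)2^\alpha\alpha\,b_m^{\alpha-1}|T^l_\gamma y-T^l_\gamma x|$, using $\zeta^{\alpha-1}\le b_m^{\alpha-1}$ for $\zeta\in(b_m,b_{m-1})$. (ii) The distortion of $T^m_\gamma$ on the cylinder $(b_m,b_{m-1})$ is at most $2+\alpha$: $(T^m_\gamma)'=\prod_{i<m}f'_{\gamma,1}\circ f_{\gamma,1}^i$ is a product of increasing functions, hence monotone there, and the ratio of its endpoint values telescopes to $f'_{\gamma,1}(b_0)/f'_{\gamma,1}(b_m)=(2+\alpha)/(1+(1+\alpha)2^\alpha b_m^\alpha)\le2+\alpha$; combined with $\int_{b_m}^{b_{m-1}}(T^m_\gamma)'=\tfrac12$ this gives $(T^m_\gamma)'\ge[6(b_{m-1}-b_m)]^{-1}$ on the cylinder, and since $T^n_\gamma=T^m_\gamma\circ T^l_\gamma$, the mean value theorem yields $|T^l_\gamma y-T^l_\gamma x|\le6(b_{m-1}-b_m)\,|T^n_\gamma y-T^n_\gamma x|$. (iii) Since $b_{m-1}-b_m=2^\alpha b_m^{1+\alpha}$, the $m$-th term is $\le6(1+\alpha)\alpha2^{2\alpha}b_m^{2\alpha}\,|T^n_\gamma y-T^n_\gamma x|$, so it remains to bound $\alpha\sum_{m\ge1}b_m^{2\alpha}$ independently of $\gamma$. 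This follows by telescoping: from $b_{m-1}^\alpha=b_m^\alpha(1+2^\alpha b_m^\alpha)^\alpha\ge b_m^\alpha+\tfrac\alpha2 b_m^{2\alpha}$ (valid because $2^\alpha b_m^\alpha\le1$), one gets $\tfrac\alpha2\sum_m b_m^{2\alpha}\le b_0^\alpha\le1$, i.e. $\sum_m b_m^{2\alpha}\le2/\alpha$. This proves \eqref{eq:distortion_1} with an absolute constant.

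For \eqref{eq:distortion_2}, on $(\hat b_n,\hat b_{n-1})$ we have $T^n_\gamma=T^{n-1}_\gamma\circ f_{\gamma,2}$ with $f_{\gamma,2}$ mapping the interval onto the left-branch cylinder $(b_{n-1},b_{n-2})$. The contribution of the $n-1$ left-branch steps is exactly the estimate above (applied with $n-1$ in place of $n$), bounded by an absolute constant times $|T^n_\gamma y-T^n_\gamma x|$. The remaining term is $|\log f'_{\gamma,2}(y)-\log f'_{\gamma,2}(x)|=(\beta-1)\,|\log(y-\tfrac12)-\log(x-\tfrac12)|\le(\beta-1)\,|y-x|/(\hat b_n-\tfrac12)$, and I would bound $|y-x|$ by $|T^n_\gamma y-T^n_\gamma x|$ divided by a lower bound for $(T^n_\gamma)'=\bigl((T^{n-1}_\gamma)'\circ f_{\gamma,2}\bigr)\cdot f'_{\gamma,2}$: here $(T^{n-1}_\gamma)'\ge[6(b_{n-2}-b_{n-1})]^{-1}$ by step (ii) and $f'_{\gamma,2}\ge\beta2^\beta(\hat b_n-\tfrac12)^{\beta-1}$ since $\beta\ge1$. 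Substituting $\hat b_n-\tfrac12=\tfrac12 b_{n-1}^{1/\beta}$ from \eqref{eq:b_n_rel} and $b_{n-2}-b_{n-1}=2^\alpha b_{n-1}^{1+\alpha}$, the powers of $2^\beta$ cancel and this term collapses to $\le6\tfrac{\beta-1}{\beta}2^\alpha b_{n-1}^\alpha\,|T^n_\gamma y-T^n_\gamma x|$, again an absolute constant times $|T^n_\gamma y-T^n_\gamma x|$.

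The point where real care is needed is uniformity over the non-compact region $\mathfrak{D}$ — as $\alpha\to0,1$ and $\beta\to\infty$ — and this is what forces the two slightly non-routine moves above: one must use the cylinder distortion identity of step (ii) instead of a naive pointwise lower bound on $(T^m_\gamma)'$, since inserting the bounds of Lemma \ref{lem:bn_bound} pointwise only produces the divergent series $\sum_m m^{-1}$; and one must derive $\alpha\sum_m b_m^{2\alpha}\le2$ directly from the recursion $f_{\gamma,1}(b_m)=b_{m-1}$ rather than from the (for this purpose too lossy) two-sided bounds of Lemma \ref{lem:bn_bound}. Everything else — the re-indexing, the mean value theorem applications, and the small-$n$ cases where some cylinders degenerate to $(b_0,b_{-1})=(\tfrac12,1)$ with $T^0_\gamma=\mathrm{id}$ — is routine.
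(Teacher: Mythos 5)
Your proof is correct, and it takes a genuinely different route from the paper. The paper disposes of Lemma \ref{lem:distortion} in one sentence: both branches of $T_\gamma$ have negative Schwarzian derivative, so the bounds follow from the Koebe Principle, with pointers to \cite{bahsoun2014decay}, \cite{aimino2015polynomial} and \cite{cui2021invariant} for the standard implementation on these LSV-type cylinders. You instead give a self-contained telescoping computation along the orbit, and the two non-routine ingredients you isolate are exactly what make it work: the cylinder identity $\int_{b_m}^{b_{m-1}}(T^m_\gamma)'=\tfrac12$ combined with the endpoint distortion ratio $f'_{\gamma,1}(b_0)/f'_{\gamma,1}(b_m)\le 2+\alpha$ (avoiding the divergent $\sum_m m^{-1}$ that a pointwise use of Lemma \ref{lem:bn_bound} would produce), and the telescoped bound $\tfrac{\alpha}{2}\sum_m b_m^{2\alpha}\le b_0^\alpha$ coming directly from the recursion $f_{\gamma,1}(b_m)=b_{m-1}$. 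The treatment of the single right-branch step, where the factors $2^\beta$ cancel against $(\hat b_n-\tfrac12)^\beta=2^{-\beta}b_{n-1}$ via \eqref{eq:b_n_rel}, is also correct and yields the clean bound $6\tfrac{\beta-1}{\beta}2^\alpha b_{n-1}^\alpha\le 12$. What the paper's route buys is brevity and immediate applicability to any full branch with negative Schwarzian; what yours buys is explicit absolute constants and a transparent verification that $C_1,C_2$ are uniform over the non-compact parameter region $\mathfrak{D}$ (as $\alpha\to 0,1$ and $\beta\to\infty$), a point the paper delegates entirely to the cited references.
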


\begin{proof} Since both branches of 
$T_{\gamma}$ have negative Schwarzian derivative, 
both upper bounds follow 
easily by applying the 
Koebe Principle \cite[Theorem IV.1.2]{melo2012one}: see 
 \cite[Lemma 4.8]{bahsoun2014decay},  
  \cite[Section 2]{aimino2015polynomial}, or
\cite[p. 416]{cui2021invariant}.    
\end{proof}

\subsection{Loss of memory}\label{sec:loss} In this section we will show show 
a key lemma by applying a result from 
\cite{korepanov2021loss}. We start by recalling 
some definitions from \cite{korepanov2021loss}.

Let $Y = [\tfrac12, 1]$ and $\gamma \in B$. 
We consider the 
measurable 
partition 
$\cP = \cP(\gamma)$ of $X = [0,1]$ consisting of all the intervals 
$I_n = (b_{n+1}, b_{n})$ and 
$J_n = (\hat{b}_{n+1}, \hat{b}_{n})$ for 
$n \ge 0$. Then the first return time 
$$
\tau(x) = \inf \{ n \ge 1 \: : \: T^n_\gamma(x)
\in Y \}
$$
is constant on each $a \in \cP$: 
$\tau(a) := \tau |_a(x) = n + 1$ for 
$x \in a \in \{ I_n, J_n \}$. Define 
the first return map  
$$
F_a = T^{\tau(a)}_\gamma : a \to Y, 
$$
and denote by $m_1$ the Lebesgue measure 
on $Y$ normalized to probability. 
Note that $F_a$ sends $a$ diffeomorphically onto 
$Y$.
For a 
non-negative function $\rho : Y \to \bR_+$, 
denote by $| \rho |_{\text{LL}}$ 
the Lipschitz seminorm 
of the logarithm of $\rho$:
$$
|\rho|_{\text{LL}}
= \sup_{y \neq y' \in Y}
\frac{| \log \rho(y) -  \log \rho(y') |}{|y - y'|},
$$
where we use the conventions $\log 0 = -\infty$ 
 and $\log 0 - \log 0 = 0$. 

 \begin{lem}\label{lem:ml_conditions} There 
    exist $C, K,K', \delta > 0$ and 
$\lambda > 1$ depending only on $B$, such that 
the following hold for all $\gamma \in B$:
\begin{itemize}
    \item[(i)] 
    For all 
    $y, y' \in a$ and $a \in 
    \{ J_n \}_{n=0}^\infty$,
    $| F_a(y) - F_a(y') | \ge 
    \lambda | y - y' |$.
    \smallskip
    \item[(ii)] The map $F_a$ is non-singular 
    with Log-Lipschitz Jacobian:
    $$
    \zeta_a = \frac{d(F_a)_*(m_1|_a)}{dm_1} 
    \quad \text{satisfies} \quad |\zeta_a|_{
        \textnormal{LL}
    } \le K.
    $$
    \item[(iii)] For all $x, x' \in a \in \cP$,
    $$
    \max_{0\le j \le \tau(a)}
    | T^j_\gamma(x) - T^j_\gamma( x') |
    \le K' | F_a(x) - F_a ( x' )  |.
    $$
    \smallskip
    \item[(iv)] For all $n \ge 0$, $m_1(T^{-n}_\gamma Y) \ge \delta$. \smallskip
    \item[(v)] For all $n \ge 1$, 
    $$
    m_1(\tau \ge n) \le C 
    n^{ - \frac{1}{\alpha_u \beta_u} }.
    $$
\end{itemize}
\end{lem}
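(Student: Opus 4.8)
The plan is to verify conditions~(i)--(v) in turn. First some bookkeeping. Since $f_{\gamma,1}$ maps $I_n=(b_{n+1},b_n)$ onto $I_{n-1}$, since $f_{\gamma,2}$ maps $J_n=(\hat b_{n+1},\hat b_n)$ onto $I_{n-1}$ (which is exactly \eqref{eq:b_n_rel}), and since $f_{\gamma,1}$ maps $I_0=(b_1,\tfrac12)$ onto $Y=(\tfrac12,1)$, one has $\tau(a)=n+1$ for $a\in\{I_n,J_n\}$ and
\[
  F_{I_n}=f_{\gamma,1}^{\,n+1}\big|_{I_n},\qquad
  F_{J_n}=f_{\gamma,1}^{\,n}\circ f_{\gamma,2}\big|_{J_n},
\]
each a diffeomorphism onto $Y$. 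Using $b_{n-1}-b_n=2^{\alpha}b_n^{1+\alpha}$ together with Lemma~\ref{lem:bn_bound} and \eqref{eq:b_n_rel}, one records the comparisons $|I_{n-1}|\asymp b_n^{1+\alpha}$ and $|J_n|\asymp b_n^{\alpha+1/\beta}$, with constants uniform in $\gamma\in B$, as well as the trivial fact $|a|<|Y|=\tfrac12$ for every $a\in\cP$.

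Conditions~(ii) and~(iii) will both follow from Lemma~\ref{lem:distortion}. For $a\in\cP$ the map $F_a=T^{\tau(a)}_\gamma$ is of the form to which \eqref{eq:distortion_1}--\eqref{eq:distortion_2} apply, so $|\log F_a'(y)-\log F_a'(y')|\le C\,|F_a(y)-F_a(y')|$ with $C$ uniform in $\gamma\in B$; transporting this estimate along the inverse branch $(F_a)^{-1}\colon Y\to a$ (whose derivative is, up to the normalising constant appearing in $m_1$, the density $\zeta_a$) gives $|\zeta_a|_{\mathrm{LL}}\le C=:K$, which is~(ii). For~(iii), fix $a\in\cP$ and $0\le j\le\tau(a)$. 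Then $T^j_\gamma(a)$ is contained in a single element of $\cP$ (it equals $I_{n-j}$ when $a\in\{I_n,J_n\}$ and $1\le j\le n$, it equals $a$ when $j=0$, while $T^{\tau(a)}_\gamma(a)=Y$), and $T^{\tau(a)-j}_\gamma$ maps that element diffeomorphically onto $Y$ with distortion $\le C|Y|=C/2$, again by Lemma~\ref{lem:distortion}. Writing $F_a=T^{\tau(a)-j}_\gamma\circ T^j_\gamma$ and combining the mean value theorem with this distortion bound and the inequality $|T^j_\gamma(a)|\le|Y|$ yields $|T^j_\gamma x-T^j_\gamma x'|\le e^{C/2}\,|F_a x-F_a x'|$ for all $x,x'\in a$, which is~(iii) with $K'=e^{C/2}$.

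For~(v), observe that $\{\tau\ge n\}\cap Y=\bigcup_{m\ge n-1}J_m=(\tfrac12,\hat b_{n-1})$, so by \eqref{eq:b_n_rel} and Lemma~\ref{lem:bn_bound},
\[
  m_1(\tau\ge n)=2\bigl(\hat b_{n-1}-\tfrac12\bigr)=b_{n-2}^{1/\beta}
  \ \le\ C\,n^{-1/(\alpha\beta)}\ \le\ C\,n^{-1/(\alpha_u\beta_u)}
\]
for $n\ge3$ (the cases $n\le2$ being trivial), using $\alpha\beta\le\alpha_u\beta_u$ on $B$, with $C$ uniform in $\gamma\in B$. For~(iv) I would use that $m_1(J_0)=1-2^{-1/\beta}$ is bounded below on $B$, together with $\sum_n n\,m_1(J_n)<\infty$ (which holds because $|J_n|\asymp n^{-1-1/(\alpha\beta)}$ and $\alpha\beta<1$): a renewal argument, made quantitative and uniform in $\gamma$ via the distortion bound of Lemma~\ref{lem:distortion}, then gives $\inf_{n\ge0}m_1(T^{-n}_\gamma Y)\ge\delta>0$ with $\delta$ depending only on $B$.

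I expect~(i), the uniform expansion of the return maps $F_{J_n}$, to be the main obstacle. Both branches of $T_\gamma$ have negative Schwarzian derivative (as recalled in the proof of Lemma~\ref{lem:distortion}), hence so does $F_{J_n}=f_{\gamma,1}^{\,n}\circ f_{\gamma,2}$, and therefore $|F_{J_n}'|$ has no interior local minimum and attains $\inf_{\overline{J_n}}|F_{J_n}'|$ at an endpoint of $J_n$. Since $F_{J_n}$ maps $J_n$ onto $Y$ with distortion $\le C|Y|$ (Lemma~\ref{lem:distortion}), the mean value theorem gives
\[
  \inf_{\overline{J_n}}|F_{J_n}'|\ \ge\ e^{-C/2}\,\frac{|Y|}{|J_n|}\ \gtrsim\ b_n^{-(\alpha+1/\beta)}\ \ge\ 2^{\alpha+1/\beta},
\]
the last step using $b_n\le b_0=\tfrac12$; in particular this infimum tends to $\infty$ as $n\to\infty$, uniformly in $\gamma\in B$, and is bounded below by a positive constant for every $n$. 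The remaining work is to arrange the implied constants so that the bound exceeds $1$ for \emph{every} $n$: for $n$ outside a bounded range this is automatic from the display, while for the finitely many remaining cells $J_n$ (where $J_n$ keeps a definite distance from the critical point $\tfrac12$) I would use instead the chain-rule factorisation $F_{J_n}'=f_{\gamma,2}'\cdot\bigl((f_{\gamma,1}^{\,n})'\circ f_{\gamma,2}\bigr)$, bounding $f_{\gamma,2}'|_{J_n}$ below directly and using that $(f_{\gamma,1}^{\,n})'\ge f_{\gamma,1}'|_{I_0}\ge 1+2^{\alpha}(1+\alpha)b_1^{\alpha}>1$ with $b_1\ge\tfrac14$ uniformly (from $b_1(1+2^{\alpha}b_1^{\alpha})=\tfrac12$); alternatively one tracks the Koebe distortion constant of Lemma~\ref{lem:distortion} explicitly, which its proof does furnish. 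Everything else in the lemma is routine given Lemmas~\ref{lem:bn_bound} and~\ref{lem:distortion}.
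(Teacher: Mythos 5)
Parts (ii), (iii) and (v) are correct and essentially the paper's own arguments (the paper takes $K'=1$ in (iii) as a direct consequence of (i), but your distortion-based $K'=e^{C/2}$ serves just as well). For (iv) you only gesture at a renewal argument and defer the uniformity in $\gamma\in B$ to an unspecified ``quantitative'' version; this is much more work than needed. The paper gets (iv) in one line from the cone property \eqref{eq:cone_lb}: $\cL_\gamma^n(\mathbf{1})$ stays in $\cC_a(\gamma_u)$ and is therefore bounded below by a constant $b=b(B)>0$ uniformly in $n$ and $\gamma$, which immediately gives the lower bound on $m_1(T_\gamma^{-n}Y)$. Use that instead.

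The genuine gap is in (i), exactly where you predicted trouble. Your Koebe/mean-value estimate gives $\inf_{J_n}|F_{J_n}'|\ge e^{-C/2}\,|Y|/|J_n|$, and while $|Y|/|J_n|\ge 2$ for every $n$, the distortion constant $C$ of Lemma \ref{lem:distortion} is never quantified, so this yields only a \emph{positive} lower bound; it exceeds $1$ only for $n\ge N_0$, with $N_0$ depending on $C$ and hence unknown. Your fallback for the remaining $n<N_0$ does not close the gap: on $J_n$ one has $f_{\gamma,2}'\ge 2\beta b_n^{1-1/\beta}$ by \eqref{eq:b_n_rel}, which for $\beta>1$ drops far below $1$ already at moderate $n$ (when $\alpha$ is small, $b_n$ decays essentially exponentially in $n$), and the single retained factor $1+2^{\alpha}(1+\alpha)b_1^{\alpha}\le 3$ of $(f_{\gamma,1}^{\,n})'$ cannot compensate; to compensate one must keep all $n$ factors, and then one is again chasing unquantified constants. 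The paper avoids all of this with the induction of \cite[p.~417]{cui2021invariant}: by convexity $(T_\gamma^{n+1})'$ attains its minimum on $J_n$ at the left endpoint $\hat b_{n+1}$, and the chain rule gives the exact ratio
\[
\frac{(T_\gamma^{n+2})'(\hat b_{n+2})}{(T_\gamma^{n+1})'(\hat b_{n+1})}
=\Bigl(\tfrac{b_{n+1}}{b_n}\Bigr)^{\frac{\beta-1}{\beta}}\bigl(1+2^{\alpha}(1+\alpha)b_{n+1}^{\alpha}\bigr)
=\frac{1+(1+\alpha)t}{(1+t)^{\frac{\beta-1}{\beta}}}>1,
\qquad t=(2b_{n+1})^{\alpha},
\]
so the contraction of $f_{\gamma,2}'$ accrued by moving one cell closer to the critical point is exactly beaten by the extra left-branch factor, and the base case $T_\gamma'(\hat b_1)=\beta 2^{1/\beta}>\tfrac32$ propagates to all $n$, giving $\lambda=\tfrac32$ uniformly on $B$. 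Replace your argument for (i) by this induction (or else produce an explicit admissible distortion constant, which neither your sketch nor the paper supplies).
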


\begin{proof} For brevity, denote 
  $T = T_\gamma$. 

  \noindent \textbf{(i)} 
  We need to show that there exists 
  $\lambda > 1$ such that 
  for all $n \ge 0$,
  \begin{align}\label{eq:ind_lb}
  (T^{n+1})'(x) > \lambda \quad \forall x \in J_n.
  \end{align}
  The inductive proof on p. 417 in 
  \cite{cui2021invariant} shows 
  that \eqref{eq:ind_lb} holds with $\lambda = \tfrac32$.
  For completeness, let us recall the computation:
  for 
  $n=0$, using piecewise 
  convexity of $T$ together with \eqref{eq:b_n_rel},
  $$
  T'(x) \ge T'(\hat{b}_1) 
  \ge \beta 2^{\frac{1}{\beta}} > \tfrac32 
  \quad \forall \beta > 0.
  $$
  For $n=k+1$, assuming \eqref{eq:ind_lb}
  for $n = k$,
  $$
  (T^{k+2})'(x) 
  \ge (T^{k+2})'( \hat{b}_{k+2} )
  \ge \frac{T'(\hat{b}_{n+2})
  T'(b_{n+1})}{ T'(\hat{b}_{n+1}) } \lambda  
  = \biggl( \frac{b_{n+1}}{b_n} \biggr)^{
    \frac{\beta - 1 }{\beta}
  } 
  (  1 + 2^{\alpha} (1 + \alpha)
  b_{n+1}^\alpha ) \lambda,
  $$
  where the inductive hypothesis and 
  \eqref{eq:b_n_rel} were used. Hence, 
  for $t = 2^{\alpha}b_{n+1}^\alpha$, 
  $$
  (T^{k+2})'(x) \ge \biggl(\frac{1}{1+t} 
  \biggr)^{ \frac{\beta - 1 }{\beta}  }
  ( 1 + (1+\alpha) t ) \lambda 
  > \frac{1 + (1 + \alpha)t}{1 + t} \lambda > \lambda.
  $$

\noindent\textbf{(ii)} For $a = J_n$ 
        and $z \in Y$,   
        $$
        \zeta_a(z) = 
        \frac{1}{
          (T^{n+1})' ( z_n ),
        }
        $$
        where $z_n = ( T^{n+1} |_{J_n}  )^{-1} z$. By 
        \eqref{eq:distortion_2}, 
        $$
        | \log (T^{n+1})' ( z_n ) 
        - \log (T^{n+1})' ( z_n' ) |
        \le C |  z - z' | \quad 
        \forall z,z' \in Y. 
        $$

\noindent \textbf{(iii)} Using (i), it is easy to 
see that this holds with 
$K' = 1$. \smallskip 

\noindent \textbf{(iv)} Recalling \eqref{eq:cone_lb}, this follows from $\cL^n( \mathbf{1} ) \ge b > 0$ 
for all $n \ge 0$. The lower bound 
$b$ depends only on $B$ since 
$\mathbf{1} \in \cC_a(\gamma_u)$ for $a \ge 1$.

\noindent \textbf{(v)} By Lemma \ref{lem:bn_bound}, 
for $n \ge 2$, 
\begin{align*}
  m_1( \tau \ge n ) 
  = 2 m ( [  \tfrac12, \hat{b}_{n-1}  ]   )
  \le 2 (
  \hat{b}_n - \tfrac12 ) \le 
        2^{ \frac{1}{\beta}( \frac{1}{\alpha^2} + 
        \frac{2}{\alpha} ) }   
    n^{ - \frac{1}{\alpha \beta} }
    \le 
    C(B)  
    n^{ - \frac{1}{\alpha_u \beta_u} }.
\end{align*}
\end{proof}


Now, if $\gamma \in B$ and $\mu$ is a non-negative measure supported 
on 
$Y$, it follows that for $\lambda > 1$ and $K$ 
as in the previous lemma,
$$
\biggl|  \frac{d (F_a)_* (\mu |_a)}{dm_1} 
\biggr|_{\text{LL}} 
\le K + \lambda^{-1} | \mu |_{\text{LL}}.
$$
For a proof of this fact, see 
\cite[Proposition 3.1]{korepanov2019explicit}.
An immediate consequence of this 
is the following 
result, where we may choose 
any $K_2 > (1 - \lambda^{-1})^{-1}$
and $K_1 = K + \lambda^{-1}K_2$:

\begin{lem}\label{lem:k1_k2} There exist constants $0 < K_1 < K_2$ 
depending only on $B$ such that for each 
$\gamma \in B$, for each non-negative measure 
$\mu$ on $Y$ with $|\mu|_{\textnormal{LL}} \le K_2$, 
and each $a \in \cP(\gamma)$ with $a \subset Y$, 
$$
\biggl|  \frac{d (F_a)_* (\mu |_a)}{dm_1} 
\biggr|_{\textnormal{LL}}  \le K_1.
$$
The constants $K_1$ and $K_2$ can be chosen 
arbitrarily large.
\end{lem}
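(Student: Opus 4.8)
The plan is to obtain Lemma~\ref{lem:k1_k2} as a purely arithmetic consequence of the Log-Lipschitz contraction estimate recalled immediately before its statement: for every $\gamma \in B$, every $a \in \cP(\gamma)$ with $a \subset Y$, and every non-negative measure $\mu$ on $Y$,
\[
\left| \frac{d(F_a)_*(\mu|_a)}{dm_1} \right|_{\textnormal{LL}} \le K + \lambda^{-1} |\mu|_{\textnormal{LL}},
\]
where $\lambda > 1$ and $K > 0$ are the constants supplied by Lemma~\ref{lem:ml_conditions}, both depending only on $B$. First I would fix these $\lambda$ and $K$. Since $\lambda > 1$, the number $1 - \lambda^{-1}$ lies in $(0,1)$, so $K(1 - \lambda^{-1})^{-1}$ is a finite positive constant depending only on $B$; I would then choose any $K_2 > K(1 - \lambda^{-1})^{-1}$ and put $K_1 := K + \lambda^{-1} K_2$. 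Both are determined by $B$ alone, one has $K_1 > 0$, and
\[
K_2 - K_1 = (1 - \lambda^{-1}) K_2 - K > 0,
\]
so that $0 < K_1 < K_2$.

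With these constants in hand the verification is immediate: if $\mu$ is a non-negative measure on $Y$ with $|\mu|_{\textnormal{LL}} \le K_2$ and $a \in \cP(\gamma)$ satisfies $a \subset Y$, then the contraction estimate above yields
\[
\left| \frac{d(F_a)_*(\mu|_a)}{dm_1} \right|_{\textnormal{LL}} \le K + \lambda^{-1} |\mu|_{\textnormal{LL}} \le K + \lambda^{-1} K_2 = K_1 .
\]
For the last sentence of the lemma I would observe that letting $K_2 \to \infty$ forces $K_1 = K + \lambda^{-1} K_2 \to \infty$ as well, while the gap $K_2 - K_1 = (1 - \lambda^{-1}) K_2 - K$ also tends to $+\infty$; hence for any prescribed threshold one can take $K_2$, and therefore $K_1$, larger than it while keeping $0 < K_1 < K_2$.

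I expect no real obstacle in the lemma itself, since once the contraction estimate is available it reduces to the choice of constants above; the only genuine content lies in that estimate, which is quoted from \cite[Proposition~3.1]{korepanov2019explicit}. Were one to make the argument self-contained, it would combine the uniform expansion $|F_a(y) - F_a(y')| \ge \lambda |y - y'|$ of Lemma~\ref{lem:ml_conditions}(i) with the Log-Lipschitz Jacobian bound $|\zeta_a|_{\textnormal{LL}} \le K$ of Lemma~\ref{lem:ml_conditions}(ii): writing the density of $(F_a)_*(\mu|_a)$ relative to $m_1$ as $\zeta_a \cdot \bigl( (d\mu/dm_1) \circ F_a^{-1} \bigr)$ on $Y$ and taking logarithms, the Lipschitz seminorm of the first factor is at most $K$, while that of the second is at most $\lambda^{-1} |\mu|_{\textnormal{LL}}$ because $F_a^{-1}$ is $\lambda^{-1}$-Lipschitz on $Y$.
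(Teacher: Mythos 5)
Your proof is correct and follows exactly the route the paper takes: the lemma is an immediate arithmetic consequence of the quoted estimate $\bigl| \frac{d(F_a)_*(\mu|_a)}{dm_1} \bigr|_{\textnormal{LL}} \le K + \lambda^{-1}|\mu|_{\textnormal{LL}}$ with the choices $K_2$ large and $K_1 = K + \lambda^{-1}K_2$. In fact your condition $K_2 > K(1-\lambda^{-1})^{-1}$ is the correct one ensuring $K_1 < K_2$; the paper writes $K_2 > (1-\lambda^{-1})^{-1}$, apparently dropping the factor $K$.
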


Fix  
$K_1$ and $K_2$ as in Lemma 
\ref{lem:k1_k2}. The following definition 
is an adaptation of 
\cite[Definition 3.5]{korepanov2021loss}.

\begin{defn} We say that a non-negative measure 
$\mu$ on $X$ is regular, if for every 
$\gamma \in B$ and every $a \in \cP(\gamma)$, 
$$
\biggl|  \frac{d (F_a)_* (\mu |_a)}{dm_1} 
\biggr|_{\textnormal{LL}}  \le K_1.
$$
\end{defn}

\begin{remark}
    Note that the measure 
    $(F_a)_* (\mu |_a)$ is supported on 
    $Y$.
\end{remark}

\begin{lem}\label{lem:regular} For sufficiently large $K_1$ 
    in Lemma \ref{lem:k1_k2}, the following 
    holds: If $\gamma_0 = (\alpha_0, \beta_0) 
    \in B$ and $\mu$ is a
    non-negative 
    measure on $X$ whose density  belongs to 
    $C_a(\gamma_0)$ with $a \ge a_0(\gamma_u)$, 
    then $\mu$ is regular. Moreover, there 
    exists $C = C(a, B) > 0$ such that for all 
    $\gamma = (\alpha, \beta)\in B$, 
    \begin{align}\label{eq:mu_tail}
\mu ( \{ x \in X \: : \: 
    T^k_{\gamma}(x) \notin Y \text{ for all 
    $1 \le k < n$} \}  ) \le 
    C n^{-\gamma_*},
    \end{align}
    where 
    $$
    \gamma_* = \tfrac{1}{\alpha_u}(
        \tfrac{1}{\beta_u} - \alpha_0 ).
    $$
\end{lem}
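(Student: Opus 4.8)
The plan is to verify the two assertions separately, using the material assembled earlier in the section. For the regularity claim, let $\mu$ be a measure whose density $f$ lies in $\cC_a(\gamma_0)$ with $a \ge a_0(\gamma_u)$. Since $\cC_a(\gamma_0) \subset \cC_a(\gamma_u)$ (monotonicity of the cone in $\gamma$, noted after the definition) and $\cL_\gamma \cC_a(\gamma_u) \subset \cC_a(\gamma_u)$ for every $\gamma \le \gamma_u$ by Lemma \ref{lem:inv_of_lsv_cone}, it follows that for any $\gamma \in B$ the iterated push-forwards of $\mu$ under $T_\gamma$ stay inside the cone. I would then show that a measure whose density lies in $\cC_a(\gamma_u)$ has $|\cdot|_{\text{LL}}$-seminorm bounded on $Y$ by a constant depending only on $B$: indeed, on $Y=[\tfrac12,1]$ the density is bounded below (by \eqref{eq:cone_lb}) and its log-Lipschitz constant is controlled because functions in the cone are decreasing with $x^{\alpha+1}f$ increasing, which pins the logarithmic derivative between $-(\alpha_u+1)/x$ and $0$ on $[\tfrac12,1]$. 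Hence $\mu|_Y$ has $|\mu|_{\text{LL}} \le K_2$ once $K_2$ is chosen large enough (allowed by Lemma \ref{lem:k1_k2}), and then Lemma \ref{lem:k1_k2} gives $|d(F_a)_*(\mu|_a)/dm_1|_{\text{LL}} \le K_1$ for every cell $a \subset Y$. For a cell $a = I_n$ not contained in $Y$, I write $F_a = F_{a'} \circ T_\gamma^{n}$ appropriately, or more simply observe that $(F_a)_*(\mu|_a) = (F_{a'})_*((T_\gamma^{j}\mu)|_{a'})$ for a suitable cell $a' \subset Y$ and iterate $j$, and since $T_\gamma^j \mu$ again has density in the cone (hence $|\cdot|_{\text{LL}}\le K_2$ on $Y$), Lemma \ref{lem:k1_k2} applies once more. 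This establishes regularity.

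For the tail bound \eqref{eq:mu_tail}, note that the event $\{x : T_\gamma^k(x) \notin Y \text{ for } 1 \le k < n\}$ is, by the structure of the partition $\cP$, the union of those cells $a \in \cP(\gamma)$ on which the first return time $\tau(a) \ge n$, i.e. $\bigcup_{\tau(a) \ge n} a$. This set is contained in $[0, b_{n-1}] \cup [\tfrac12, \hat b_{n-1}]$. The contribution of the right piece is handled exactly as in part (v) of Lemma \ref{lem:ml_conditions}: $\mu([\tfrac12,\hat b_{n-1}]) \le \|f\|_{L^\infty(Y)} \cdot 2m([\tfrac12,\hat b_{n-1}])$, and $\|f\|_{L^\infty(Y)}$ is bounded in terms of $B$ (again since $f$ decreasing forces $\sup_Y f \le f(\tfrac12) \lesssim m(f)$ via the cone conditions), so this contributes $O(n^{-1/(\alpha_u\beta_u)})$ by the $\hat b_n$ estimate in Lemma \ref{lem:bn_bound}. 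The left piece is where the exponent $\tfrac{1}{\alpha_u}(\tfrac{1}{\beta_u} - \alpha_0)$ enters: by the defining inequality of $\cC_a(\gamma_0)$,
\begin{align*}
\mu([0, b_{n-1}]) = \int_0^{b_{n-1}} f(t)\, dt \le a\, b_{n-1}^{\frac{1}{\beta_0} - \alpha_0}\, m(f),
\end{align*}
and $m(f) = \mu(X)$ is finite. Using $b_{n-1} \le C n^{-1/\alpha}$ from Lemma \ref{lem:bn_bound} and $\tfrac{1}{\beta_0} \ge \tfrac{1}{\beta_u}$, $\alpha \le \alpha_u$, one gets $\mu([0,b_{n-1}]) \le C(a,B)\, n^{-\frac{1}{\alpha_u}(\frac{1}{\beta_u} - \alpha_0)}$ (keeping track that $\tfrac{1}{\beta_0}-\alpha_0 > 0$ since $\gamma_0 \in \mathfrak D$). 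Combining the two pieces and taking the worse exponent gives $\mu(\cdot) \le C n^{-\gamma_*}$ with $\gamma_* = \min\{\tfrac{1}{\alpha_u}(\tfrac{1}{\beta_u}-\alpha_0),\ \tfrac{1}{\alpha_u\beta_u}\}$, as claimed.

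The main obstacle I anticipate is the uniformity of constants: one must be careful that the log-Lipschitz bound on $Y$ for densities in the cone, the lower bound $b$ in \eqref{eq:cone_lb}, and the constants $C_i$ in Lemma \ref{lem:bn_bound} all depend only on $B$ (equivalently on $\gamma_u$) and not on the individual $\gamma \in B$ — this is exactly what lets the single constant $K_1$ and the single exponent $\gamma_*$ work across all of $B$. The monotonicity $a_0(\gamma) \le a_0(\gamma_u)$ and $\cC_a(\gamma) \subset \cC_a(\gamma_u)$ for $\gamma \le \gamma_u$ is the lever that makes this uniformity possible, and the cone invariance of Lemma \ref{lem:inv_of_lsv_cone} is what allows reducing cells not in $Y$ to cells in $Y$ after finitely many iterates without leaving the cone. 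The only genuinely new point beyond bookkeeping is the reduction $(F_a)_*(\mu|_a) = (F_{a'})_*((T_\gamma^{j}\mu)|_{a'})$ for cells $a \not\subset Y$, which should follow directly from the definition of the first-return map and the fact that $T_\gamma^j$ maps $I_{n}$ onto a union of cells together with a piece of $Y$.
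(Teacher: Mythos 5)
Your treatment of the tail bound \eqref{eq:mu_tail} is correct and is exactly the paper's argument: split into $[0,b_{n-1}]$ and $[\tfrac12,\hat b_{n-1}]$, use the cone's integral condition for the first piece and the pointwise bound $\sup_Y f\le 2^{\alpha_0+1}m(f)$ (from monotonicity of $f$ and $x^{\alpha_0+1}f$) for the second, then invoke Lemma \ref{lem:bn_bound}. Your treatment of the regularity claim for cells $a=J_n\subset Y$ is also fine, though packaged differently: you bound $|\mu|_Y|_{\textnormal{LL}}\le 2(\alpha_u+1)$ from the two monotonicity conditions and then cite Lemma \ref{lem:k1_k2}, whereas the paper redoes the computation directly, writing $\tfrac{d(F_a)_*(\mu|_a)}{dm_1}(z)=\rho(z_n)/(T_\gamma^{n+1})'(z_n)$ and combining \eqref{eq:distortion_2} with $\rho(z_n')/\rho(z_n)\le (z_n/z_n')^{\alpha_0+1}$. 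These are equivalent for cells inside $Y$.

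The gap is in your handling of the cells $a=I_n\subset[0,\tfrac12]$, which the definition of regularity also requires. The identity $(F_a)_*(\mu|_a)=(F_{a'})_*((T_\gamma^j\mu)|_{a'})$ with $a'\subset Y$ cannot hold: a point of $I_n$ has orbit $I_{n-1},\dots,I_0$ before entering $Y$, so the only candidate intermediate cell is $I_0=(b_1,\tfrac12)\not\subset Y$, and Lemma \ref{lem:k1_k2} (and the contraction inequality preceding it) is stated only for measures supported on $Y$ and cells contained in $Y$. Moreover $(T_\gamma^j)_*\mu$ restricted to a cell is not $(T_\gamma^j)_*(\mu|_a)$, since other preimages contribute. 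More substantively, you cannot simply quote a log-Lipschitz bound for the density on $I_n$: for $\rho\in\cC_a(\gamma_0)$ the logarithmic derivative on $I_n=(b_{n+1},b_n)$ is only controlled by $(\alpha_0+1)/b_{n+1}\sim n^{1/\alpha}$, which is unbounded in $n$. The correct argument (the one the paper omits as ``similar'') is the direct computation: writing $z_n=(T_\gamma^{n+1}|_{I_n})^{-1}z$, one has
\begin{equation*}
|\log\rho(z_n)-\log\rho(z_n')|\le(\alpha_0+1)\frac{|z_n-z_n'|}{b_{n+1}}
\le C(\alpha_0+1)\frac{|I_n|}{b_{n+1}}\,|z-z'|,
\end{equation*}
where the second step uses the bounded distortion \eqref{eq:distortion_1}; the point is that $|I_n|/b_{n+1}\lesssim n^{-1/\alpha-1}\cdot n^{1/\alpha}=n^{-1}$ is bounded by Lemma \ref{lem:bn_bound}, so the strong contraction of $F_{I_n}^{-1}$ beats the blow-up of the log-derivative of $\rho$ near the origin. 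Without this cancellation the claim is not established for the $I_n$ cells.
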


\begin{proof} 
        To see that $\mu$ is 
        regular, let $\rho \in C_a(\gamma_0)$ 
        denote the 
        density of $\mu$. Let $\gamma \in B$. 
        We consider the 
        case $a \in \{J_n\}_{n=0}^\infty 
        \subset \cP(\gamma)$, 
        the proof for $a \in \{I_n\}_{n=0}^\infty$ being 
        similar, which we omit. For $a = J_n$ 
        and $z \in Y$,   
        $$
        \frac{d( F_a )_* ( \mu |_a )}{dm_1} (z) = 
        \frac{\rho(  z_n )}{
          (T^{n+1}_\gamma)' ( z_n ),
        }
        $$
        where $z_n = ( T^{n+1}_\gamma |_{J_n}  )^{-1} z$. By 
        \eqref{eq:distortion_2}, 
        $$
        | \log (T^{n+1}_\gamma)' ( z_n ) 
        - \log (T^{n+1}_\gamma)' ( z_n' ) |
        \le C |  z - z' | \quad 
        \forall z,z' \in Y. 
        $$
        It remains to show that 
        \begin{align*}
          | \log \rho(z_n) - 
          \log \rho(z_n') |
          \le C | z - z'|.
        \end{align*}
        Suppose that $z > z'$. Then
        $z_n > z_n'$. Since $\rho$ is 
        decreasing, 
        $$
        | \log \rho(z_n) - 
          \log \rho(z_n') |
        = \log \frac{\rho(z_n')}{\rho(z_n)} 
        $$
        and, since $x^{\alpha_0+1} \rho(x)$ is 
        increasing, 
        $$
        \frac{\rho(z_n')}{\rho(z_n)} 
        = \frac{ (z_n')^{\alpha_0 + 1} \rho(z_n') }{
          (z_n)^{\alpha_0+ 1} \rho(z_n)
        } \frac{ (z_n)^{\alpha_0+ 1} }{(z_n'
        )^{\alpha_0 + 1}}
        \le 
        \frac{ (z_n)^{\alpha_0 + 1} }{(z_n')^{
            \alpha_0 + 1}}.
        $$
        Hence, 
        $$
        | \log \rho(z_n) - 
          \log \rho(z_n') |
          \le 2 (  \log z_n - \log z_n' )
          \le 2 \tfrac{1}{z_n'} | z_n - z_n' |
          \le 4 | z_n - z_n' |
          \le C | z - z'|,
        $$
        where the last inequality follows 
        again by 
        \eqref{eq:distortion_2}. We have shown that 
        $\tfrac{d( F_a )_* ( \mu |_a )}{dm_1} (z)
        \le C$ for some $C = C(B) > 0$. The regularity 
        of $\mu$ follows by taking $K_1$ sufficiently 
        large in Lemma \ref{lem:k1_k2}.
        
        It remains to show the tail bound  
        \eqref{eq:mu_tail}.
        Since $\rho \in \cC_a(\gamma_0)$, 
        \begin{align*}
          &\mu ( \{ x \in X \: : \: 
            T^k_{\gamma}(x) 
            \notin Y \text{ for all 
            $1 \le k < n$} \}  )
            = \mu( [ 0, b_{n-1} ] ) + 
            \mu( [ \tfrac12, \hat{b}_{n-1} ] )
         \\ 
         &= \int_0^{b_{n-1}} \rho(x) \, dx 
          + \int_{\frac12}^{\hat{b}_{n-1}} 
           \rho(x) \, dx 
        \le a b_{n-1}^{  \frac{1}{\beta_0} - \alpha_0 }
        +   \rho(\tfrac12) (  \hat{b}_{n-1} - \tfrac12 ) \\
        &\le a C(B)  ( b_{n-1}^{  \frac{1}{\beta_0} 
        - \alpha_0}
        +   ( \hat{b}_{n-1} - \tfrac12 ) ).
        \end{align*}
        Now, by applying 
        Lemma \ref{lem:bn_bound} we obtain 
        \begin{align*}
          &\mu ( \{ x \in X \: : \: 
            T^k_\gamma (x) \notin Y \text{ for all 
            $1 \le k < n$} \}  ) \\
            &\le 
            a C(B) (  
                n^{  - \frac{1}{\alpha}(
                \frac{1}{\beta_0} - \alpha_0 ) } 
                + n^{ -\frac{1}{\alpha\beta} } )
                \le 
                a C(B) (  
                    n^{  - \frac{1}{\alpha_u}(
                    \frac{1}{\beta_u} - \alpha_0 ) } 
                    + n^{ -\frac{1}{\alpha_u\beta_u} 
                    } ).
        \end{align*}
\end{proof}

We are now ready to establish the key lemma 
mentioned at the beginning of this section:

\begin{lem}[Loss of memory]\label{lem:memory_loss}
    For all $\gamma_0 = (\alpha_0, \beta_0) 
    \in B$ there 
    exists a constant $C > 0$ determined by 
    $a, B, \gamma_0$ such that the 
    following holds: If  
$\phi, \psi \in 
  \cC_a(\gamma_0)$ are 
  such that $m(\phi) = m(\psi)$,
  where $a \ge a_0(\gamma_u)$, then for 
  all $\gamma \in B$, 
  \begin{align}\label{eq:ml_strong}
    \Vert \mathcal{L}_{\gamma}^n 
    ( \phi - \psi   ) \Vert_{L^1(m)}
    \le 
    C ( m(\psi) + 
    m(\phi) ) n^{-\gamma_*},
  \end{align} 
  where 
  $$
    \gamma_* = \tfrac{1}{\alpha_u}(
        \tfrac{1}{\beta_u} - \alpha_0 ) 
    $$
  Moreover, there exists a constant 
  $C' > 0$ determined by $a, B$ such that 
  for all $\gamma \in B$, and all 
  $\phi, \psi \in 
  \cC_a(\gamma_u)$ with  $m(\phi) = m(\psi)$,
  where $a \ge a_0(\gamma_u)$, 
  \begin{align}\label{eq:ml_weak}
    \Vert \mathcal{L}_{\gamma}^n 
    ( \phi - \psi   ) \Vert_{L^1(m)}
    \le 
    C' ( m(\psi) + 
    m(\phi) ) n^{ 1 - \frac{1}{\alpha_u\beta_u} }.
  \end{align}
\end{lem}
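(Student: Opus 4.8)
**

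The plan is to reduce the loss-of-memory estimate for $\cL_\gamma^n(\phi-\psi)$ to the abstract convergence result of \cite[Theorem 3.8]{korepanov2021loss} applied to the induced system $(Y, F, \cP(\gamma))$ defined above. The key observation is that the transfer operator $\cL_\gamma$ on $X = [0,1]$ can be studied via the return map $F_a = T_\gamma^{\tau(a)}$ to $Y = [\tfrac12,1]$, whose uniform hyperbolicity (Lemma \ref{lem:ml_conditions}(i)), bounded distortion (Lemma \ref{lem:ml_conditions}(ii)--(iii)), non-degeneracy (Lemma \ref{lem:ml_conditions}(iv)), and polynomial tail bound $m_1(\tau \ge n) \le C n^{-1/(\alpha_u\beta_u)}$ (Lemma \ref{lem:ml_conditions}(v)) are precisely the hypotheses required by \cite{korepanov2021loss}. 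The first step is to verify that $\phi\,dm$ and $\psi\,dm$ are regular measures in the sense of the definition preceding Lemma \ref{lem:regular}: since $\phi,\psi \in \cC_a(\gamma_0)$ (resp.\ $\cC_a(\gamma_u)$), this is exactly the content of Lemma \ref{lem:regular}, so the push-forwards $(F_a)_*(\phi|_a\,m)$ and $(F_a)_*(\psi|_a\,m)$ have Log-Lipschitz densities bounded by $K_1$.

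Next I would invoke \cite[Theorem 3.8]{korepanov2021loss}, whose conclusion bounds $\Vert \cL_\gamma^n(\phi-\psi)\Vert_{L^1(m)}$ in terms of the common mass, the regularity constant $K_1$, and the rate at which the tail of the return-time function decays together with the rate at which the measure escapes to deep levels of the tower. The tail of $\tau$ under $m_1$ decays like $n^{-1/(\alpha_u\beta_u)}$ by Lemma \ref{lem:ml_conditions}(v), while the escape rate of $\mu = \phi\,m$ (and of $\psi\,m$) is controlled by the tail bound \eqref{eq:mu_tail} from Lemma \ref{lem:regular}, namely $\mu(\{x : T^k_\gamma x \notin Y,\ 1 \le k < n\}) \le C n^{-\gamma_*}$ with $\gamma_* = \min\{\tfrac{1}{\alpha_u}(\tfrac{1}{\beta_u} - \alpha_0),\ \tfrac{1}{\alpha_u\beta_u}\}$. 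Feeding these two rates into the abstract estimate yields \eqref{eq:ml_strong}; the constant depends on $a$, $B$, and $\gamma_0$ precisely because $\gamma_*$ involves $\alpha_0$ through the cone-escape exponent $\tfrac{1}{\beta_0} - \alpha_0$, which enters via $\gamma_0$. For the weaker bound \eqref{eq:ml_weak}, the same argument is run with $\phi,\psi \in \cC_a(\gamma_u)$: now the cone membership only guarantees $\int_0^x \rho \le a x^{1/\beta_u - \alpha_u} m(\rho)$, so the escape rate degrades to $n^{-(1/\alpha_u)(1/\beta_u - \alpha_u)} = n^{-1/(\alpha_u\beta_u) + 1}$, and since $\alpha_u\beta_u < 1$ this dominates $n^{-1/(\alpha_u\beta_u)}$; combining with the tail of $\tau$ gives the stated rate $n^{1 - 1/(\alpha_u\beta_u)}$, with a constant now uniform in $\gamma_0$ (depending only on $a$ and $B$).

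The main obstacle will be checking the precise hypotheses of \cite[Theorem 3.8]{korepanov2021loss} regarding uniformity in the parameter $\gamma$ as it ranges over $B$: the theorem is stated for a fixed system, so one must observe that all the constants produced in Lemma \ref{lem:ml_conditions} ($\lambda$, $K$, $K'$, $\delta$, $C$) and in Lemma \ref{lem:k1_k2} ($K_1$, $K_2$) depend only on $B$ and not on the individual $\gamma \in B$, and that the regularity constant $K_1$ can be fixed once and for all. A secondary subtlety is matching the normalization: \cite{korepanov2021loss} works with probability measures on $Y$ while we work with $m$ on $X = [0,1]$, so one must track the proportionality between $m|_Y$ and $m_1$ and between $\int_X \phi\,dm$ and the mass that the induced dynamics sees. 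Once the translation of hypotheses is done carefully, the two displayed estimates follow by directly reading off the conclusion of the abstract theorem with the two different escape exponents $\gamma_*$ coming from the two cone assumptions.
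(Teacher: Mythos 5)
Your proposal matches the paper's proof essentially step for step: verify the hypotheses (NU:1)--(NU:7) of \cite[Theorem 3.8]{korepanov2021loss} via Lemma \ref{lem:ml_conditions} with constants uniform over $B$, use Lemma \ref{lem:regular} to get regularity and the escape-rate tail $n^{-\gamma_*}$ for $\cC_a(\gamma_0)$ densities (after normalizing by $m(\phi)$), and rerun the argument with the weaker tail $n^{1-\frac{1}{\alpha_u\beta_u}}$ coming from $\cC_a(\gamma_u)$ membership to get \eqref{eq:ml_weak}. This is exactly the paper's argument, so nothing further is needed.
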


\begin{remark}
    If $\alpha_0 \ll \alpha_u$, \eqref{eq:ml_strong} 
    gives a much better estimate than 
    \eqref{eq:ml_weak}.  
    This
    will be instrumental in the proof of 
    the linear response formula for 
    $\alpha\beta \ge 2$. 
\end{remark}


\begin{proof}[Proof of Lemma 
    \ref{lem:memory_loss}] 
    Both upper bounds follow by applying  
    \cite[Theorem 3.8]{korepanov2021loss} in 
    the case of the family $\cT =  \{ T_\gamma \}$ 
    consisting only of the single map $T_\gamma$ with $\gamma \in B$.
    We provide more details below.

    By Lemma \ref{lem:ml_conditions}, the 
    family $\cT$ satisfies the conditions 
    (NU:1)-(NU:7) 
    from \cite[Section 3.1]{korepanov2021loss} 
    with constants depending only on $B$.  
    Then, suppose that $\mu$ and $\mu'$ are two 
    probability measures on $X$ with densities 
    $\rho, \rho' \in \cC_a(\gamma_0)$, where 
    $a \ge a_0(\gamma_u)$. By Lemma 
    \ref{lem:regular}, $\mu$ and $\mu'$ are 
    regular. By Lemmas \ref{lem:ml_conditions} 
    and \ref{lem:regular}, for some 
    $C_i = C_i(B)$ > 0, 
    $$
    m_1( \tau \ge n) \le C_1 n^{-\frac{1}{\alpha_u 
    \beta_u}},
    $$
    and for $\nu \in \{ \mu, \mu' \}$,
    \begin{align}\label{eq:tail_nu}
    \nu ( \{ x \in X \: : \: 
            T^k_\gamma (x) \notin Y \text{ for all 
            $1 \le k < n$} \}  ) \le 
            C_2n^{-\gamma_*}.
    \end{align}
    In the terminology of \cite{korepanov2021loss}, 
    this means that $m_1$ has tail bound 
    $h(n) = C_1(B) n^{-\frac{1}{\alpha_u 
    \beta_u}}$ and $\nu$ has 
    tail bound $r(n) = C_2(B) n^{-\gamma_*}$, 
    where $\gamma_* \le \tfrac{1}{\alpha_u 
    \beta_u}$. We have verified the 
    assumptions of 
    \cite[Theorem 3.8]{korepanov2021loss}, 
    which implies the existence of 
    a constant $C = C(\gamma_*, B) > 0$ 
    such that 
$$
\sup_{A \in \cB([0,1]) } 
\biggl| 
  \int_A \cL^n_\gamma ( \rho - \rho'  ) \, dm 
\biggr| = 
\sup_{A \in \cB([0,1]) } | (T^n_\gamma)_* \mu(A) -  
(T^n_\gamma)_* \nu (A)  | 
\le C n^{   - \gamma_*  }
$$
holds for all $n \ge 1$ and all 
$\gamma \in B$. In particular, 
\begin{align}\label{eq:density_ml}
\Vert \cL^n_\gamma ( \rho - \rho'  ) \Vert_{L^1(m)}
\le 2C n^{ -  \gamma_* }.
\end{align}
Now, \eqref{eq:ml_strong} follows
from \eqref{eq:density_ml} by scaling $\phi$ and $\psi$
with $m(\phi)$, noting that 
$\phi / m(\phi) \in \cC_a(\gamma_u)$.

Finally, \eqref{eq:ml_weak} also follows by 
an application of Theorem 
\cite[Theorem 3.8]{korepanov2021loss}, because 
we can replace the right hand side of 
\eqref{eq:tail_nu}  with  
$C_2n^{1 - \frac{1}{\alpha_u\beta_u}}$ for 
densities $\rho, \rho' \in \cC_a(\gamma_u)$.
In this case, \cite[Theorem 3.8]{korepanov2021loss} 
guarantees that $\Vert \cL^n_\gamma ( \rho - \rho'  ) \Vert_{L^1(m)}
\le C' n^{1 - \frac{1}{\alpha_u\beta_u}}$ 
holds for $C' > 0$ determined by $B$ 
and $a$.

\end{proof}


\subsection{Technical estimates} We assume 
that $\gamma = (\alpha,\beta) \in \mathfrak{D}$ 
and denote $T = T_\gamma$.

\begin{lem}\label{lem:distortion_returns}  
    For all $\ell \ge 1$, 
  there exist $C_i = C_i(\gamma, \ell) > 0$
  such that for all $m \ge 1$ the following bounds hold:
\begin{align}
  &\sup_{ x \in (0,1) : T^m( x ) \ge b_\ell } \frac{1}{(T^m)' x} 
  \le C_1
  m^{  - 1 - \frac{1}{\alpha\beta} }, \label{eq:dist_first} \\
  &\sup_{ x \in (0,1) : T^m
  ( x ) \ge b_\ell } 
  \frac{  (T^m)'' x  }{ ((T^m)' x)^2 } \le  
  C_2. \label{eq:dist_second}
\end{align}
\end{lem}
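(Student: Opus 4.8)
I would reduce \eqref{eq:dist_first} to a lower bound $(T^m)'(x)\gtrsim m^{1+\frac1{\alpha\beta}}$ and \eqref{eq:dist_second} to an upper bound on a distortion sum, and then analyse the orbit of a point $x$ with $T^m(x)\ge b_\ell$ excursion by excursion using the partition $\cP(\gamma)$. Write $\theta=1+\frac1{\alpha\beta}$ and $T=T_\gamma$. As building blocks I would record, for the first-return branches, $(T^{n})'|_{I_{n-1}}\asymp|I_{n-1}|^{-1}\asymp n^{1+\frac1\alpha}$ and $(T^{n+1})'|_{J_n}\asymp|J_n|^{-1}\asymp n^{\theta}$ (using Lemma~\ref{lem:distortion} and Lemma~\ref{lem:bn_bound}), together with $|f_{\gamma,1}''/f_{\gamma,1}'|\asymp x^{\alpha-1}\asymp n^{\frac1\alpha-1}$ on $I_n$ and $|f_{\gamma,2}''/f_{\gamma,2}'|=(\beta-1)/(x-\tfrac12)\asymp n^{\frac1{\alpha\beta}}$ on $J_n$ (via \eqref{eq:b_n_rel}); all constants depend only on $\gamma$. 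I would also need uniform-in-$n$ distortion bounds for a \emph{partial} left run $f_{\gamma,1}^{k}|_{I_n}$ with $k\le n$, and for $f_{\gamma,1}^{k}$ restricted to a block of $\ell+1$ consecutive fundamental domains $I_n$; these follow from Lemma~\ref{lem:distortion} by writing such an iterate as the initial segment of a return branch, the complementary segment acting on a set bounded away from $0$ by $b_\ell$.

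Next, the orbit decomposition. Two degenerate situations are handled directly: if $x\in I_k$ with $k\ge m$ then in fact $x\in[b_{\ell+m},b_{m-1})$ and $T^m=f_{\gamma,1}^m$ there, so the distortion bound on that block of $\ell+1$ fundamental domains gives $(T^m)'(x)\asymp m^{1+\frac1\alpha}$ and $(T^m)''/((T^m)')^2\lesssim\sum_{s<m}(m-s)^{-2}\lesssim1$ (both with the sharper exponent $1+\tfrac1\alpha$); if $x\in J_n$ with $n>m$ then necessarily $n\le m+\ell$ and a one-right-step-then-left-run computation gives $(T^m)'(x)\gtrsim f_{\gamma,2}'|_{J_n}\cdot|I_{n-m}|^{-1}\gtrsim m^{\theta}$ with distortion bounded as before. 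Otherwise the orbit meets $Y=[\tfrac12,1]$, at times $v_0<\cdots<v_p\le m$; put $j=m-v_p$. This partitions $\{0,\dots,m-1\}$ into a transient block $T^{v_0}|_{I_{v_0-1}}$ (absent when $x\in Y$); full-excursion blocks, the $i$-th one carrying $T^{\,v_{i+1}-v_i}|_{J_{n_i}}$ with $n_i=v_{i+1}-v_i-1$ and orbit $J_{n_i}\to I_{n_i-1}\to\cdots\to I_0\to Y$; and, when $j\ge1$, a final partial block $T^j$ with orbit $J_{k_p+1}\to I_{k_p}\to\cdots\to I_{k_p-j+1}$, where $j-1\le k_p\le\ell+j-2$ (the last inclusion being exactly where the hypothesis $T^m(x)\ge b_\ell$ enters).

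For \eqref{eq:dist_first} I would note that each transient or full-excursion block expands both by $\ge\lambda_*>1$ (Lemma~\ref{lem:ml_conditions}(i) for full excursions, and $f_{\gamma,1}'>1+2^\alpha(1+\alpha)b_1^\alpha$ on $I_0$ for the transient) and by $\ge c_\gamma(\text{block length})^{\theta}$, and then apply the elementary fact that if $\mu_i\ge\max(\lambda_*,c_* d_i^{\theta})$ and $\sum_i d_i=M$, then $\prod_i\mu_i\ge c_* M^{\theta}\,\inf_{P\ge1}(\lambda_*^{\,P-1}P^{-\theta})$, the infimum being a positive constant since the expression tends to $\infty$ with $P$. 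This yields $\prod_{\text{transient},\,\text{full}}\mu_i\gtrsim(m-j)^{\theta}$. The final partial block may be \emph{contracting}, but its expansion is $\gtrsim f_{\gamma,2}'|_{J_{k_p+1}}\cdot|I_{k_p}|^{-1}\gtrsim (k_p+1)^{\frac1{\alpha\beta}-\frac1\alpha}(k_p+1)^{1+\frac1\alpha}=(k_p+1)^{\theta}\gtrsim j^{\theta}$; since $j(m-j)\ge m-1$ for $1\le j\le m-1$ (and the case $j=0$ only needs the first product), multiplying the two estimates gives $(T^m)'(x)\gtrsim m^{\theta}$, which is \eqref{eq:dist_first}.

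For \eqref{eq:dist_second} I would expand $\frac{(T^m)''(x)}{((T^m)'(x))^2}=\sum_{s=0}^{m-1}\frac{T''(T^sx)}{T'(T^sx)}\cdot\frac1{(T^{m-s})'(T^sx)}$ and estimate it block by block. A summand at a point of ``level $p$'' (in $I_p$, resp.\ $J_p$) has $|T''/T'|\asymp p^{\frac1\alpha-1}$, resp.\ $p^{\frac1{\alpha\beta}}$, while $(T^{m-s})'(T^sx)$ already contains the expansion accumulated before the current block is completed, which is $\gtrsim(p+1)^{1+\frac1\alpha}$, resp.\ $\gtrsim p^{\theta}$; hence the summand is $\lesssim(p+1)^{-2}$, resp.\ $p^{-1}$, and each single block contributes $O(1)$. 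To sum these over the possibly $\sim m$ many full-excursion blocks I would factor $(T^{m-s})'(T^sx)=(T^{m-v_{i+1}})'(T^{v_{i+1}}x)\cdot(T^{v_{i+1}-s})'(T^sx)$ and apply \eqref{eq:dist_first} to the first factor — legitimate since $T^{m-v_{i+1}}(T^{v_{i+1}}x)=T^m(x)\ge b_\ell$ — which gains an extra $(m-v_{i+1})^{-\theta}$; as the $m-v_{i+1}$ are distinct positive integers, $\sum_i(m-v_{i+1})^{-\theta}\le\zeta(\theta)<\infty$, and adding the $O(1)$ contributions of the transient, the last full excursion and the final partial block gives \eqref{eq:dist_second} with $C_2=C_2(\gamma,\ell)$. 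The main obstacle is not any individual estimate but the organisation: proving the uniform distortion bounds for the non-return iterates in the first step, and, for \eqref{eq:dist_second}, arranging the per-block bounds so that they sum over all excursions — which is exactly what the gain $(m-v_{i+1})^{-\theta}$ coming from \eqref{eq:dist_first} makes possible.
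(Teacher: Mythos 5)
Your proposal is correct and follows essentially the same route as the paper's proof: the same decomposition of the orbit into excursions between visits to $Y$, with each block's derivative bounded below by $\max\{\lambda, c\,d_i^{\,1+1/(\alpha\beta)}\}$ and a pigeonhole over block lengths for \eqref{eq:dist_first}, and for \eqref{eq:dist_second} the same factorization $(T^{m-s})' = (T^{m-v_{i+1}})'(T^{v_{i+1}}x)\cdot (T^{v_{i+1}-s})'(T^{s}x)$ in which \eqref{eq:dist_first}, applied to the tail, supplies the summable gain $(m-v_{i+1})^{-1-1/(\alpha\beta)}$ over excursions. The only slip is the factor $|I_{n-m}|^{-1}$ in the degenerate case $x\in J_n$ with $n\ge m$, which should be the reciprocal length $|I_{n-1}|^{-1}$ of the domain of the left run (as you in fact write correctly in the analogous computation for the final partial block).
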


\begin{proof} 
  Suppose that $m, \ell \ge 1$ and $x \in [0, 1]$ are such that $T^m x \ge b_\ell$. Let 
  $0 \le \ell_1 < \ldots < \ell_k \le m$ be such that for all $0 \le j \le m$, 
  $T^jx \in Y = [\tfrac12, 1]$ if and only if 
  $j \in \{ \ell_1, \ldots, \ell_k\}$. 
  We may assume $k \ge 1$, for otherwise 
  the trajectory remains in $[0, \tfrac12]$ and we 
  are in the situation of the usual 
  intermittent 
  map $T_{\alpha, \beta}$ with $\beta = 1$,
   in which case both upper bounds 
  (are well-known and) follow easily from  
  \eqref{eq:distortion_1}.
  
  We have 
  $$
  (T^m)'x = (T^{m - \ell_k})' T^{\ell_k} x \cdot \prod_{j=2}^k ( T^{\ell_j - \ell_{j-1}} )' T^{\ell_{j-1}} x
  \cdot (T^{\ell_1})' x, 
  $$
  where we use the convention $T^0 = \text{id}_{[0,1]}$. Suppose $\ell_1 > 0$ 
  i.e. $x \in [0, \tfrac12]$. Since $T^j x \in [0, \tfrac12]$ for $j < \ell_1$
  and $T^{\ell_1} x \ge \tfrac12$, we have the lower bound 
  \begin{align}\label{eq:return_lb_1}
  (T^{\ell_1})' x \ge (f^{\ell_1}_{\gamma, 1} )'
  b_{\ell_1} \ge C(\alpha) \ell_1^{ 
    1 + \frac{1}{\alpha} }, 
  \end{align}
  where the last inequality follows from 
  Lemma 5.3 in 
  \cite{korepanov2016linear}. Next, note that 
  for each $2 \le j \le k$ 
  we must have $T^{\ell_{j-1}}x \in [ \hat{b}_{p_j}, \hat{b}_{p_j - 1} ]$, where $p_j = \ell_j - \ell_{j-1}$. 
  Hence, $( T^{\ell_j - \ell_{j-1}} )' T^{\ell_{j-1}} x > \lambda > 1$ holds 
  by \eqref{eq:ind_lb}. On the other hand, it follows by an application of Lemma \ref{lem:distortion} that 
  \begin{align}\label{eq:lb_deriv_kobe}
  ( T^{\ell_j - \ell_{j-1}} )' T^{\ell_{j-1}} x \ge C 
   \frac{T^{p_j} ( \hat{b}_{p_j - 1}) - T^{p_j} ( \hat{b}_{p_j })}{
     \hat{b}_{p_j - 1} - \hat{b}_{p_j} 
    }
  = \frac{C}{2} \frac{1}{ \hat{b}_{p_j - 1} - \hat{b}_{p_j}   }.
  \end{align}
  Using \eqref{eq:b_n_rel}, the 
  H\"{o}lder continuity of 
  $t \mapsto t^{\frac{1}{\beta}}$, 
  and Lemma \ref{lem:bn_bound} we see that $  \hat{b}_{p_j - 1} - \hat{b}_{p_j}  \le 
     C(\gamma) ( \ell_{j} - \ell_{j-1} )^{ -  1 - \frac{1}{\alpha\beta}  } $, 
     so that 
  $$
  ( T^{\ell_j - \ell_{j-1}} )' T^{\ell_{j-1}} x  \ge C(\gamma) 
  ( \ell_{j} - \ell_{j-1} )^{   1 + \frac{1}{\alpha\beta}  }.
  $$
  Hence, 
  \begin{align}\label{eq:return_lb_2}
    ( T^{\ell_j - \ell_{j-1}} )' T^{\ell_{j-1}} x  \ge 
    \max \{ \lambda , 
    C(\gamma) 
  ( \ell_{j} - \ell_{j-1}  )^{    1 + \frac{1}{\alpha\beta}  } \}.
  \end{align}
  Finally, to control $(T^{m - \ell_k})' T^{\ell_k} x$, note that if $\ell_k < m$
  we must have 
  $ \hat{b}_{m - \ell_k + \ell } 
  \le T^{\ell_k}x < \hat{b}_{m - \ell_k} $. Hence, 
  there exists $0 \le p < \ell$ such that $T^{\ell_k} x \in J_{ m - \ell_k + p  } =
   [ \hat{b}_{m - \ell_k + p + 1  }, 
  \hat{b}_{m - \ell_k + p }  ]$. Since $T^{m- \ell_k}$ maps $J_{m-\ell_k + p}$ diffeomorphically 
  onto $[b_{p+1}, b_{p}]$ and both branches of $T$ 
  have negative Schwarzian derivative, it 
  follows by an application of 
  the Koebe Principle 
  \cite[Theorem IV.1.2]{melo2012one} that
  \begin{align}\label{eq:return_lb_3}
  ( T^{m - \ell_k} )' T^{\ell_k}x \ge C(\ell) 
  \frac{ b_{  p  } - b_{ p + 1 } }{ \hat{b}_{m - \ell_k + p   } - 
  \hat{b}_{m - \ell_k + p + 1  } }
  \ge C(\gamma, \ell) 2 b_\ell  ( m - \ell_k)^{   1 + \frac{1}{\alpha\beta} 
   }.
  \end{align}

  Gathering the estimates \eqref{eq:return_lb_1}, \eqref{eq:return_lb_2}, and 
  \eqref{eq:return_lb_3}, we see that 
  for some $C_i = C_i(\gamma, \ell) > 0$ and $\lambda > 1$,
  \begin{align}\label{eq:dist_sharp}
   (T^m)'x &\ge  
      C_1   ( m - \ell_k + 1)^{   1 + \frac{1}{\alpha\beta} 
    }  (\ell_1 + 1)^{  1 + \frac{1}{\alpha} } 
   \cdot \prod_{j=2}^k \max \{ \lambda , 
   C_2 
 ( \ell_{j} - \ell_{j-1}  )^{  1 + \frac{1}{\alpha\beta }  } \}. 
\end{align}
One of the integers $\ell_1, \ell_2 - \ell_1, \ldots, \ell_k - \ell_{k-1}, m - \ell_k$ must be 
at least $ \tfrac{m}{k+1}$, so that 
$$
(T^m)'x \ge C(\gamma, \ell)   \biggl( \frac{m}{k+1}  \biggr)^{ 
   1 + \frac{1}{\alpha\beta}    } \lambda^{k} 
  \ge C(\gamma, \ell)  m^{  1 + \frac{1}{\alpha\beta}   }.
$$
This proves \eqref{eq:dist_first}. 

For \eqref{eq:dist_second}, we write 
$$
\frac{(T^m)''x}{  ((T^m)'x)^2 } 
= \sum_{j=0}^{m-1} \frac{T''(T^j x )}{ T'( T^j x ) } 
\cdot \frac{ 1 }{ (T^{m - j })' T^j x }
$$
and consider elements in the sum case by case.

\noindent\textbf{Case} 
$1^{\circ}$: $j < \ell_1$. Since 
$T^j x \ge b_{\ell_1 - j}$ and 
$T^j x \in [0, \tfrac12]$,
\begin{align*}
\frac{T''(T^j x )}{ T'( T^j x ) } 
\cdot \frac{ 1 }{ (T^{m - j })' T^j x } 
&\le C(\alpha)
b_{\ell_1 - j}^{\alpha - 1} 
\frac{1}{(T^{m - \ell_1} )' T^{\ell_1} x } 
\cdot \frac{1}{ (T^{\ell_1 - j})' T^j } \\
&\le 
C(\alpha)
( \ell_1 - j)^{ \frac{1}{\alpha} - 1 } 
\frac{1}{(T^{m - \ell_1} )' T^{\ell_1} x } 
\cdot \frac{1}{ (T^{\ell_1 - j})' T^jx },
\end{align*}
Using 
\eqref{eq:dist_first} and \eqref{eq:return_lb_1}, 
we see that 
$$
\frac{1}{(T^{m - \ell_1} )' T^{\ell_1} x }  
\le C(\gamma, \ell)
(m - \ell_1)^{- 1  - \frac{1}{\alpha \beta }  } 
\quad \text{and} \quad 
\frac{1}{ (T^{\ell_1 - j})' T^jx } 
\le C(\gamma) ( \ell_1 - j )^{ - 1 - \frac{1}{\alpha} }.
$$
Hence, 
$$
\frac{T''(T^j x )}{ T'( T^j x ) } 
\cdot \frac{ 1 }{ (T^{m - j })' T^j x } 
\le C(\gamma, \ell) 
(\ell_1 - j)^{-2} (m - \ell_1)^{-
   1 - \frac{1}{\alpha\beta}  }.
$$

\noindent\textbf{Case} 
$2^{\circ}$: $j = \ell_i$ for some 
$1 \le i < k$. Now, 
$$
\frac{T''(T^j x )}{ T'( T^j x ) } 
\cdot \frac{ 1 }{ (T^{m - j })' T^j x } 
= \frac{\beta - 1}{T^{\ell_i} x - \frac12} 
 \frac{1}{ ( T^{\ell_{i+1} 
- \ell_i} )' T^{\ell_i} x }
\cdot \frac{1}{(T^{m - \ell_{i+1}} )' 
T^{\ell_{i+1}} x}.
$$
Since $T^{\ell_i} x \in [ 
\hat{b}_{\ell_{i+1} - \ell_i }, 
\hat{b}_{\ell_{i+1} - \ell_i - 1}
]$,  
\eqref{eq:lb_deriv_kobe} 
combined with Lemma \ref{lem:bn_bound}
yields 
$$
\frac{\beta - 1}{T^{\ell_i} x - \frac12} 
 \frac{1}{ ( T^{\ell_{i+1} 
- \ell_i} )' T^{\ell_i} x }
\le C(\gamma) \frac{ 
\hat{b}_{\ell_{i+1} - \ell_i - 1}
- 
\hat{b}_{\ell_{i+1} - \ell_i } 
 }{ \hat{b}_{\ell_{i+1} - \ell_i } 
- \frac12 }
\le C(\gamma) 
( \ell_{i+1} - \ell_i )^{ -1 },
$$
while \eqref{eq:dist_first} implies 
$$
\frac{1}{(T^{m - \ell_{i+1}} )' 
T^{\ell_{i+1}} x} \le 
C(\gamma, \ell)
 (m - \ell_{i+1})^{ -  1 - 
 \frac{1}{\alpha\beta}  }.
$$
We conclude that 
$$
\frac{T''(T^j x )}{ T'( T^j x ) } 
\cdot \frac{ 1 }{ (T^{m - j })' T^j x } 
\le 
C(\gamma, \ell)
( \ell_{i+1} - \ell_i )^{ -1 }
(m - \ell_{i+1})^{ -  
 1 - \frac{1}{\alpha\beta}  }.
$$

\noindent{\textbf{Case $3^{\circ}$}}: 
$j = \ell_k$. By \eqref{eq:return_lb_3} and 
Lemma \ref{lem:bn_bound}, 
\begin{align*}
  \frac{T''(T^j x )}{ T'( T^j x ) } 
\cdot \frac{ 1 }{ (T^{m - j })' T^j x } 
\le C(\gamma, \ell)
(m - \ell_k)^{- 1}.
\end{align*} 
 
\noindent{\textbf{Case $4^{\circ}$}}: 
$\ell_i < j < \ell_{i+1}$. As in Case $1^{\circ}$ we 
see that
$$
\frac{T''(T^j x )}{ T'( T^j x ) } 
\cdot \frac{ 1 }{ (T^{m - j })' T^j x } 
\le C(\gamma, \ell)
(\ell_{i+1} - j )^{-2} 
(m - \ell_{i+1})^{ - 1 - \frac{1}{\alpha\beta} 
}.
$$

\noindent{\textbf{Case $5^{\circ}$}}: 
$j > \ell_k$.
Since $T^jx \ge b_{m-j+\ell}$, we obtain 
by Lemma \ref{lem:bn_bound} the upper bound 
$$
\frac{T''(T^j x )}{ T'( T^j x ) } 
\le C(\alpha, \ell) (m-j)^{ \frac{1}{\alpha} - 1 }.
$$
Moreover, 
$T^jx \in [b_{m-j+\ell}, b_{m+j}]$. Hence, 
we can use the argument from the proof 
of \eqref{eq:return_lb_3} together 
with Lemma \ref{lem:bn_bound} to obtain 
$$
\frac{ 1 }{ (T^{m - j })' T^j x } 
\le C(\alpha, \ell) (m-j)^{ -\frac{1}{\alpha} - 1 }.
$$
It follows that 
$$
\frac{T''(T^j x )}{ T'( T^j x ) } 
\cdot \frac{ 1 }{ (T^{m - j })' T^j x } 
\le C(\alpha, \ell)
(m - j )^{-2}.
$$

Now, \eqref{eq:dist_second} follows by summing 
over the upper bounds obtained in each case:
\begin{align}
  &\sum_{j=0}^{m-1} \frac{T''(T^j x )}{ T'( T^j x ) } 
  \cdot \frac{ 1 }{ (T^{m - j })' T^j x } \notag\\
  &\le 
  \sum_{j=0}^{\ell_1 - 1} 
  C_1 
(\ell_1 - j)^{-2} (m - \ell_1)^{-
   1 - \frac{1}{\alpha\beta}  }
  \tag{\text{Case $1^{\circ}$}} \\
  &+ 
  \sum_{i=1}^{k-1} C_2
  ( \ell_{i+1} - \ell_i )^{ - 1 }
  (m - \ell_{i+1})^{ - 1 - \frac{1}{\alpha\beta}  }
  \tag{\text{Case $2^{\circ}$}} \\ 
  &+ C_3
  (m - \ell_k)^{- 1}
  \tag{\text{Case $3^{\circ}$}} \\ 
  &+  
  \sum_{i = 1 }^{k-1} 
  \sum_{\ell_i < j < \ell_{i+1}}
  C_4 
  (\ell_{i+1} - j )^{-2} 
  (m - \ell_{i+1})^{ -  1 - 
  \frac{1}{\alpha\beta}  } 
  \tag{\text{Case $4^{\circ}$}} \\
&+ \sum_{j = \ell_{k} + 1}^{m -1 }
C_5 
(m - j )^{-2}
\tag{\text{Case $5^{\circ}$}} \\
&\le C(\gamma,\ell). \notag
\end{align}

\end{proof}

Besides the two bounds in Lemma \ref{lem:distortion_returns}, the 
bound
\eqref{eq:dist_sharp} will be an important tool 
in the proof of the linear response formula.

\section{About cones and invariant measures}\label{sec:cone}

Recall that 
$$
\mathfrak{D} = \{ 
(\alpha, \beta) \in 
(0, 1) \times [1, \infty)
\: : \: 
\alpha \beta < 1
\}.
$$
In this section we show that for 
$\phi \in L^\infty(m)$,
\begin{align*}
    D_i := - 
    \sum_{k=0}^{\infty} \int_0^1
    \phi \cdot 
    \cL_{\gamma}^k[ ( X_{\gamma, i} 
    \cN_{\gamma,i}
    (h_{\gamma})  )'  ] \, dm 
    \quad (i=1,2)
  \end{align*}
is well-defined if  
$\gamma \in \mathfrak{D}$. We start by noting that 
$\cN_{\gamma, i}$ preserves the cone 
constructed in \cite{cui2021invariant}.


\begin{lem}\label{lem:cone_inv} For 
  $\gamma \in \mathfrak{D}$ define 
  $$
  \cC_a^{(2)}(\gamma)
  = C^2(0, 1] \cap
  \cC_a(\gamma).
  $$
  Suppose $\gamma_u \in \mathfrak{D}$ and 
  $a \ge a_0(\gamma_u)$. Then, for all 
  $\gamma \in \mathfrak{D}$ such that
   $\gamma \le \gamma_u$,
  $h_{\gamma} 
\in \cC_a^{(2)}(\gamma_u)$ together with the following 
inclusions hold:
\begin{align*}
  \cL_{\gamma} 
  (\cC_a^{(2)}(\gamma_u) )
  \subset 
  \cC_a^{(2)}(\gamma_u), \quad 
  \cN_{\gamma, i}
  ( \cC_a^{(2)}(\gamma_u) )
    \subset 
    \cC_{a}^{(2)}(\gamma_u), \quad i \in \{1,2\}.
\end{align*}
\end{lem}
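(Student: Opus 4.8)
The plan is to split the statement into pieces and reuse what is already available. The membership $h_\gamma\in\cC_a(\gamma_u)$ needs nothing new: by \cite{cui2021invariant} one has $h_\gamma\in\cC_a(\gamma)$ whenever $a\ge a_0(\gamma)$, and since $\gamma\le\gamma_u$ forces $a_0(\gamma)\le a_0(\gamma_u)$ and $\cC_a(\gamma)\subset\cC_a(\gamma_u)$, the standing hypothesis $a\ge a_0(\gamma_u)$ yields $h_\gamma\in\cC_a(\gamma_u)$; likewise $\cL_\gamma(\cC_a(\gamma_u))\subset\cC_a(\gamma_u)$ is exactly Lemma~\ref{lem:inv_of_lsv_cone}. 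So only two things remain to be checked: (a) the $C^2$-regularity statements, and (b) invariance of $\cC_a(\gamma_u)$ under the single-branch operators $\cN_{\gamma,i}$.

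For (a) the point is that both inverse branches are smooth away from the neutral fixed point: $g_{\gamma,2}(x)=\tfrac12(x^{1/\beta}+1)$ is $C^\infty$ on $(0,1]$, and $g_{\gamma,1}=f_{\gamma,1}^{-1}$ is $C^\infty$ on $(0,1]$ because $f_{\gamma,1}$ is a $C^\infty$ diffeomorphism of $(0,\tfrac12]$ onto $(0,1]$ with nonvanishing derivative; moreover $g_{\gamma,i}$ maps $(0,1]$ into $(0,1]$. Hence $\cN_{\gamma,i}\varphi=g_{\gamma,i}'\cdot(\varphi\circ g_{\gamma,i})\in C^2((0,1])$ whenever $\varphi\in C^2((0,1])$, and $\cL_\gamma\varphi=\cN_{\gamma,1}\varphi+\cN_{\gamma,2}\varphi$ is too, which takes care of the $C^2$ part of the two inclusions. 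The regularity $h_\gamma\in C^2((0,1])$ is the delicate point, since $\cL_\gamma$ only composes with diffeomorphisms and so a naive bootstrap of $\cL_\gamma h_\gamma=h_\gamma$ does not gain smoothness. I would obtain it as follows: using the pointwise bound $h_\gamma(x)\lesssim x^{1/\beta-\alpha-1}m(h_\gamma)$ coming from the integral condition in $\cC_a(\gamma)$ together with Lemma~\ref{lem:bn_bound}, one checks that $\cN_{\gamma,1}^n h_\gamma\to0$ uniformly on compact subsets of $(0,1]$, so that $h_\gamma=\sum_{j\ge0}\cN_{\gamma,1}^j\cN_{\gamma,2}h_\gamma$; restricting to $Y=[\tfrac12,1]$ this exhibits $h_\gamma|_Y$ as the invariant density of the transfer operator of the first-return map $F$, a uniformly expanding full-branch map with countably many $C^\infty$ branches whose distortion is controlled by Lemma~\ref{lem:ml_conditions} and \eqref{eq:dist_first}--\eqref{eq:dist_second} (or \eqref{eq:dist_sharp}), and the latter give summability of the $C^2$-norms of the inverse branches, whence $h_\gamma|_Y\in C^2(Y)$; feeding this back into $h_\gamma=\sum_{j\ge0}\cN_{\gamma,1}^j\cN_{\gamma,2}h_\gamma$ and estimating termwise with Lemma~\ref{lem:distortion_returns} shows the series converges in $C^2([c,1])$ for every $c>0$. (Alternatively, $h_\gamma\in C^2((0,1])$ can be read off from the $\cL_\gamma$-invariant higher-order cone of Appendix~\ref{appendix:a}, Lemma~\ref{lem:cone_in_c3}, which contains a smooth seed and lies inside $C^2((0,1])$.) This $C^2$-smoothness of $h_\gamma$ near the neutral fixed point is the main obstacle; everything else is routine.

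For (b), fix $\varphi\in\cC_a^{(2)}(\gamma_u)$, $\gamma\le\gamma_u$, $a\ge a_0(\gamma_u)$, $i\in\{1,2\}$. Positivity of $\cN_{\gamma,i}\varphi$ is clear, and it is decreasing as a product of two nonnegative decreasing functions ($\varphi\circ g_{\gamma,i}$ because $g_{\gamma,i}$ is increasing; $g_{\gamma,i}'$ because $g_{\gamma,2}'(x)=\tfrac1{2\beta}x^{1/\beta-1}$ with $\beta\ge1$ and $g_{\gamma,1}'=1/(f_{\gamma,1}'\circ g_{\gamma,1})$ with $f_{\gamma,1}'$ and $g_{\gamma,1}$ increasing). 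For the monotonicity of $x\mapsto x^{\alpha_u+1}\cN_{\gamma,i}\varphi(x)$, factor it as $(g_{\gamma,i}(x)^{\alpha_u+1}\varphi(g_{\gamma,i}(x)))\cdot((x/g_{\gamma,i}(x))^{\alpha_u+1}g_{\gamma,i}'(x))$: the first factor is increasing since $\varphi\in\cC_a(\gamma_u)$ and $g_{\gamma,i}$ is increasing, while the second reduces, for $i=2$, to $\tfrac{2^{\alpha_u+1}}{2\beta}x^{\alpha_u+1/\beta}(x^{1/\beta}+1)^{-(\alpha_u+1)}$ and, for $i=1$ (using $x/g_{\gamma,1}(x)=1+2^\alpha g_{\gamma,1}(x)^\alpha$ and $g_{\gamma,1}'(x)=(1+2^\alpha(1+\alpha)g_{\gamma,1}(x)^\alpha)^{-1}$), to $(1+s)^{\alpha_u+1}/(1+(1+\alpha)s)$ with $s=2^\alpha g_{\gamma,1}(x)^\alpha$ increasing in $x$; a one-line logarithmic-derivative computation shows each is increasing, the relevant signs being governed by $\beta\ge1$ (for $i=2$) and by $(\alpha_u-\alpha)+(1+\alpha)\alpha_u s\ge0$, i.e. $\alpha\le\alpha_u$ (for $i=1$). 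Finally, for the integral condition, the substitution $u=g_{\gamma,i}(t)$ gives $\int_0^x\cN_{\gamma,i}\varphi=\int_{g_{\gamma,i}(0)}^{g_{\gamma,i}(x)}\varphi$, and one bounds this by the cone inequality for $\varphi$ using $g_{\gamma,1}(x)\le x$ (for $i=1$) and $g_{\gamma,2}(x)-\tfrac12=\tfrac12 x^{1/\beta}$ with $1/\beta\ge1/\beta_u>1/\beta_u-\alpha_u$ (for $i=2$), after an elementary comparison — via monotonicity of $\varphi$ and of $x^{\alpha_u+1}\varphi$ — of the normalizing integrals $m(\cN_{\gamma,i}\varphi)$ and $m(\varphi)$. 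This last step is the only place one must track a multiplicative constant, which is absorbed into the standing hypothesis $a\ge a_0(\gamma_u)$. Combining, $\cN_{\gamma,i}(\cC_a^{(2)}(\gamma_u))\subset\cC_a^{(2)}(\gamma_u)$ and $\cL_\gamma(\cC_a^{(2)}(\gamma_u))\subset\cC_a^{(2)}(\gamma_u)$, completing the proof.
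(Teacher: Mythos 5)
Your proposal is correct in substance and, for the core cone-invariance computations, essentially reproduces the paper's argument: the factorization of $x^{\alpha_u+1}\cN_{\gamma,i}(\varphi)(x)$ into the increasing factor $y^{\alpha_u+1}\varphi(y)$ times an explicit ratio is exactly the paper's computation (with $\xi=(2y_1)^{\alpha}$ for $i=1$), and the sign conditions $\alpha\le\alpha_u$ and $\beta\ge1$ you isolate are the right ones. You diverge in two places. First, for the integral condition the paper simply dominates $\int_0^x\cN_{\gamma,i}(\varphi)\,dm\le\int_0^x\cL_\gamma(\varphi)\,dm$ and invokes Lemma~\ref{lem:inv_of_lsv_cone}, obtaining the bound $a\,m(\varphi)x^{1/\beta_u-\alpha_u}$; your substitution gives the same bound. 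You are right, and more candid than the paper, that the cone condition is normalized by $m(\cN_{\gamma,i}\varphi)$ rather than $m(\varphi)$, but your proposed fix does not quite work for $i=1$: the comparison $m(\cN_{\gamma,1}\varphi)\ge\tfrac12 m(\varphi)$ only places $\cN_{\gamma,1}\varphi$ in $\cC_{2a}(\gamma_u)$, and since the target parameter scales with $a$ the factor $2$ cannot be absorbed by enlarging $a$ (note $(x/g_{\gamma,1}(x))^{1/\beta_u-\alpha_u}\to1$ as $x\to0$, so the loss is genuine near the origin). This imprecision is shared with the paper's own proof and is harmless for the way the lemma is used downstream (only bounds against $m(\varphi)$ are ever needed), but it should be acknowledged rather than claimed to vanish. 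Second, for $h_\gamma\in C^2((0,1])$ the paper simply cites the construction in the proof of Theorem 1.1 of \cite{cui2021invariant}, whereas you sketch an inducing argument; that sketch is plausible but incomplete (summability of $C^2$-norms of the induced inverse branches and $C^2$-regularity of the induced invariant density are nontrivial claims not contained in Lemma~\ref{lem:ml_conditions} or Lemma~\ref{lem:distortion_returns}), so your parenthetical alternative — reading $h_\gamma\in C^3\subset C^2$ off the invariant higher-order cone of Lemma~\ref{lem:cone_in_c3}, whose proof does not depend on the present lemma — is the cleaner route if you wish to avoid the citation.
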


\begin{proof}
  Let $\gamma = (\alpha,\beta) \in \mathfrak{D}$ 
and $\gamma_u = (\alpha_u,\beta_u) \in \mathfrak{D}$  
with $\gamma \le \gamma_u$.
By Lemma \ref{lem:inv_of_lsv_cone}, 
we have  $\cL_{\gamma} 
(\cC_a^{(2)}(\gamma_u) )
\subset 
\cC_a^{(2)}(\gamma_u)$ since 
$\cL_{\gamma} C^2(0,1] \subset C^2(0,1]$ 
clearly holds. Then 
$\cL_\gamma(h_\gamma) = 
h_\gamma \in \cC_a^{(2)}(\gamma_u)$ 
follows by the proof of Theorem 
1.1 in \cite{cui2021invariant}. 
Since $\cL_\gamma(\varphi) = 
\sum_{i=1,2} \cN_{\gamma, i}(\varphi)$, 
it remains to show that $\cN_{\gamma, i}
( \cC_a^{(2)}(\gamma_u) )
  \subset 
  \cC_{a}^{(2)}(\gamma_u)$.

  Let $\varphi \in \cC_a(\gamma_u)$. It is 
  easy to see that $\cN_{\gamma, i}(\varphi) 
  \ge 0$ is decreasing for $i \in \{1,2\}$. 
  Denoting $y_1 = g_{\gamma, 1}(x)$, we have 
  $$
  x^{ \alpha_u + 1 } \cN_{\gamma, 1}(\varphi)(x)
  = y_1^{\alpha_u + 1} \varphi(y_1) 
  \frac{(1 + \xi)^{\alpha_u + 1}}{1 + (\alpha+1)\xi}
  $$
  for $\xi = (2y_1)^\alpha$, from which we see that 
  $x \mapsto 
  x^{ \alpha_u + 1 } 
  \cN_{\gamma, 1}(\varphi)(x)$ 
  is increasing. Denoting  
  $y_2 = 
  g_{\gamma, 2}(x)$,
  we have 
  $$
  x^{ \alpha_u + 1 } \cN_{\gamma, 2}(\varphi)(x)
  = C(\alpha_u, \beta) y_2^{\alpha_u + 1} 
  \varphi(y_2) \cdot y_2^{-\alpha_u - 1} (y_2 - 
  \tfrac12)^{\beta\alpha_u + 1},
  $$
  from which we see that $x \mapsto 
  x^{ \alpha_u + 1 } \cN_{\gamma, 2}(\varphi)(x)$
  is increasing. Finally, using Lemma  
  \ref{lem:inv_of_lsv_cone} we see that 
  $\int_0^x \cN_{\gamma, i}(\varphi) \, dm
  \le \int_0^x \cL_\gamma(\varphi) \, dm \le 
  a m(\varphi) x^{ \frac{1}{\beta_u} 
  - \alpha_u }$.
\end{proof}

Following \cite{baladi2016linear}, we 
define cones involving higher 
order derivatives and establish their 
invariance:

\begin{lem}\label{lem:cone_in_c3} 
  For 
  $b_1 \ge \alpha + 1$, 
  $b_2 \ge b_1$ and $b_3 \ge b_1$, 
  define
  \beqn
\begin{split}
\cC^{(3)}_{b_1,b_2,b_3}
 = 
\{\varphi \in C^3((0,1]) \,:\, 
\varphi(x) \ge 0 , \, 
& |\varphi'(x)| 
\le \tfrac{b_1}{x}\varphi(x),
\, |\varphi''(x)| \le \tfrac{b_2}{x^2}
\varphi(x), \\
& |\varphi'''(x)| \le 
\tfrac{b_3}{x^3} 
\varphi(x), 
\, \forall x \in (0, 1]
\}.
\end{split}
\eeqn
Suppose 
$B = [ \alpha_\ell, \alpha_u ] 
\times [1, \beta_u] \subset \mathfrak{D}$.
There exist constants $\kappa_i > 1$ depending 
only on $B$ such that 
if $b_1 \ge \alpha_u + 1$, 
$b_2 \ge \kappa_1 b_1$ and $b_3 \ge \kappa_2 b_2$, 
then for all $\gamma \in B$, 
$h_\gamma \in \cC^{(3)}_{b_1,b_2,b_3} $ together 
with the following inclusions hold:
$$
\cL_{\gamma} (
\cC^{(3)}_{b_1,b_2,b_3} )
\subset \cC^{(3)}_{b_1,b_2,b_3}, 
\quad 
\cN_{\gamma, i} (
\cC^{(3)}_{b_1,b_2,b_3} )
\subset \cC^{(3)}_{b_1,b_2,b_3}, \quad i 
\in \{1,2\}.
$$
\end{lem}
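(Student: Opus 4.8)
The plan is to verify the defining inequalities of $\cC^{(3)}_{b_1,b_2,b_3}$ for $h_\gamma$ and for the images $\cL_\gamma\varphi$, $\cN_{\gamma,i}\varphi$ when $\varphi\in\cC^{(3)}_{b_1,b_2,b_3}$. Since $\cL_\gamma=\cN_{\gamma,1}+\cN_{\gamma,2}$ and the cone is closed under addition of functions (the bound $|\varphi'(x)|\le \tfrac{b_1}{x}\varphi(x)$ plus nonnegativity is preserved under sums, similarly for the higher derivatives, because $|(\varphi_1+\varphi_2)'|\le|\varphi_1'|+|\varphi_2'|\le\tfrac{b_1}{x}(\varphi_1+\varphi_2)$), it suffices to prove the two inclusions $\cN_{\gamma,i}(\cC^{(3)}_{b_1,b_2,b_3})\subset\cC^{(3)}_{b_1,b_2,b_3}$ for $i=1,2$; the statement for $\cL_\gamma$ follows immediately, and then $h_\gamma\in\cC^{(3)}_{b_1,b_2,b_3}$ follows by the usual argument: one shows the cone is closed under the natural bounded convergence (the bounds are preserved under $C^3$-limits of uniformly bounded families), picks any starting density in the cone — e.g.\ from Lemma \ref{lem:cone_inv} we already know $h_\gamma\in\cC^{(2)}_a(\gamma_u)$ — and observes that $\cL_\gamma^n(\mathbf 1)$, or rather the Cesàro averages, stay in $\cC^{(3)}_{b_1,b_2,b_3}$ and converge to $h_\gamma$. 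Alternatively one checks the explicit bounds for $h_\gamma$ directly from its known asymptotics, but routing it through invariance of the cone is cleaner.

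For the core computation I would fix $i$, write $y=g_{\gamma,i}(x)$ and differentiate $\cN_{\gamma,i}(\varphi)(x)=g_{\gamma,i}'(x)\varphi(g_{\gamma,i}(x))$ up to three times by the chain and product rules. This produces $\cN_{\gamma,i}(\varphi)'$, $\cN_{\gamma,i}(\varphi)''$, $\cN_{\gamma,i}(\varphi)'''$ as polynomial expressions in $\varphi(y)$, $\varphi'(y)$, $\varphi''(y)$, $\varphi'''(y)$ with coefficients built from $g_{\gamma,i}'$, $g_{\gamma,i}''$, $g_{\gamma,i}'''$, $g_{\gamma,i}''''$. Using $|\varphi^{(j)}(y)|\le b_j y^{-j}\varphi(y)$ ($j=1,2,3$) one bounds each term by a constant (depending on the $g$-derivatives, hence on $B$) times $x^{-(\text{order})}\cN_{\gamma,i}(\varphi)(x)$, provided one controls the ratios $|g_{\gamma,i}^{(k)}(x)|\,x^{k-1}/g_{\gamma,i}'(x)$ and $y/x$. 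These ratios are uniformly bounded over $x\in(0,1]$ and $\gamma\in B$: for $i=2$ everything is explicit since $g_{\gamma,2}(x)=\tfrac12(x^{1/\beta}+1)$, giving clean power-law bounds; for $i=1$ one uses the estimates on $g_{\gamma,1}$ already recorded in the excerpt ($g_{\gamma,1}(x)\in[x/2,x]$ and the expansion $|g_{\gamma,1}(x)-x(1-2^\alpha x^\alpha)|\le 2^{2\alpha}x^{2\alpha+1}$), plus differentiating the relation $f_{\gamma,1}(g_{\gamma,1}(x))=x$ to get the higher derivatives of $g_{\gamma,1}$ under control. The key structural point is that the $b_1$-bound for $\cN_{\gamma,i}\varphi$ will come out as $|(\cN_{\gamma,i}\varphi)'(x)|\le \tfrac{C(B)+C(B)b_1}{x}\cN_{\gamma,i}\varphi(x)$; since $b_1\ge\alpha_u+1$ this is $\le \tfrac{b_1'}{x}\cN_{\gamma,i}\varphi$ with $b_1'=b_1(1+o(1))$ — but one needs $b_1'\le b_1$, so in fact one must be slightly more careful: the coefficient of $b_1$ coming from the chain rule is exactly $g_{\gamma,i}'(x)/(g_{\gamma,i}'(x))\cdot(y/x)\le 1$ for $i=1$ (since $y\le x$), so the $b_1$-term is reproduced with coefficient $\le 1$ and the extra additive $C(B)/x$ is absorbed because — and here is the role of $b_2\ge\kappa_1 b_1$ and $b_3\ge\kappa_2 b_2$ — one iterates: the new $b_1$ stays $\le b_1$ once $b_1$ is large enough relative to $C(B)$, and similarly the second- and third-order bounds reproduce with the required constants once $\kappa_1,\kappa_2$ are chosen large depending on $B$.

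The main obstacle, and where I would spend the most care, is the bookkeeping of the third-derivative bound: $(\cN_{\gamma,i}\varphi)'''$ involves $\varphi'''(y)$ multiplied by $(g')^{?}$ but also lower-order terms like $\varphi''(y)\,g''$, $\varphi'(y)\,g'''$, and $\varphi(y)\,g''''$, each of which must be bounded by $\tfrac{b_3}{x^3}\cN_{\gamma,i}\varphi(x)$; the leading term gives $b_3\cdot(y/x)^3\le b_3$ for $i=1$, and the remaining terms contribute $C(B)(1+b_1+b_2)\le C(B)b_2\le\tfrac{b_2}{x^3}\cdot(\text{stuff})$-type contributions that are absorbed into $b_3$ precisely when $b_3\ge\kappa_2 b_2$ with $\kappa_2=\kappa_2(B)$ large. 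Because this is a lengthy but entirely mechanical computation — differentiate, substitute the inductive bounds, collect powers of $x$, and choose $\kappa_1,\kappa_2,b_1$ large enough depending on $B$ — I would relegate the full details to Appendix \ref{appendix:a}, as the paper indicates, and in the main text only record the structure of the argument and the choice of constants. One technical subtlety worth flagging inside the appendix: near $x=0$ the branch $g_{\gamma,1}$ has $g_{\gamma,1}(x)\sim x$ so the bounds are essentially those of the identity map up to the $2^\alpha x^\alpha$ correction, and the correction terms are exactly of the right order in $x$ to be harmless; near $x=1$ all quantities are bounded away from their singular values so uniformity over $\gamma\in B$ is automatic by compactness.
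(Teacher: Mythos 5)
Your proposal follows essentially the same route as the paper: reduce to the invariance of the cone under each $\cN_{\gamma,i}$ separately (the paper imports the $i=1$ case from Baladi--Todd, Proposition~2.4, and carries out the three-derivative bookkeeping for the critical branch $i=2$ in Appendix~\ref{appendix:a}, arriving at the explicit thresholds $b_1\ge 1$, $b_2\ge 12b_1$, $b_3\ge 103b_2$), with $h_\gamma\in\cC^{(3)}_{b_1,b_2,b_3}$ then obtained from the invariance exactly as you indicate. One detail to repair when writing out the $i=1$ computation: the factor multiplying $b_j$ in the leading term is $\bigl(g_{\gamma,1}'(x)\,x/g_{\gamma,1}(x)\bigr)^j$ rather than $(y/x)^j$, and it is $\le 1$ because $f_{\gamma,1}'(y)\ge f_{\gamma,1}(y)/y$ by convexity of $f_{\gamma,1}$ with $f_{\gamma,1}(0)=0$ --- not merely because $y\le x$.
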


\begin{proof} The statement concerning 
    $\cN_{\gamma, 1}$ follows from  
  \cite[Appendix B]{baladi2016linear}. 
  The other statements are proved in Appendix 
  \ref{appendix:a}.
\end{proof}

The following simple observation is 
useful when dealing with regular 
functions that do not belong to a cone 
$\cC_a(\gamma)$:

\begin{lem}\label{lem:belong_general} 
    Let $\gamma =(\alpha, \beta) \in \mathfrak{D}$.
   Suppose that $F \in C^2(0, 1]$ is a function 
   such that $m(F) = 0$, and for some $C_i \ge 1$ and $0 < \delta < 1 + \alpha - 
   \tfrac{1}{\beta}$, 
   $$
   | F(x)  | \le C_0 a x^{ - \delta  } 
   \quad \text{and} \quad 
   | F'(x)  | \le C_1 a x^{ - \delta  - 1 }
   $$
   hold for all $x \in (0,1]$.
   Then, $F(x)
   + \lambda x^{\frac{1}{\beta}
   - \alpha - 1}
   \in \cC_a(\gamma)$ if 
   \begin{align}\label{eq:cone_cond}
   a \ge 2 \quad \text{and} \quad 
   \lambda \ge a (C_0 + C_1) \max \left\{ 
    \tfrac{1}{1 + \alpha - 
  \frac{1}{\beta}},
   4 , 
  \tfrac{2( \frac{1}{\beta} - \alpha )}{
         1 - \delta
        }
   \right\}.
   \end{align}
 \end{lem}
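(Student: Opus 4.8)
The plan is to set $G(x) = F(x) + \lambda x^{\frac{1}{\beta}-\alpha-1}$ and verify, one at a time, the four defining properties of $\cC_a(\gamma)$: that $G\ge 0$, that $G$ is decreasing, that $x^{\alpha+1}G(x)$ is increasing, and that $\int_0^x G(t)\,dt \le a\,x^{\frac{1}{\beta}-\alpha}m(G)$. Each of these reduces to a single monotonicity-of-exponents comparison once one records the following elementary fact. The hypothesis $0<\delta<1+\alpha-\frac{1}{\beta}$ says precisely that $-\delta>\frac{1}{\beta}-\alpha-1$, and since $\alpha\beta<1$ forces $\frac{1}{\beta}-\alpha>0$ (so in particular $\delta<1$), multiplying this inequality of exponents by suitable powers and restricting to $x\in(0,1]$ yields
\begin{align*}
& x^{-\delta}\le x^{\frac{1}{\beta}-\alpha-1},\qquad x^{-\delta-1}\le x^{\frac{1}{\beta}-\alpha-2},\\
& x^{\alpha-\delta}\le x^{\frac{1}{\beta}-1},\qquad x^{1-\delta}\le x^{\frac{1}{\beta}-\alpha}.
\end{align*}
These are exactly the four comparisons needed to control, respectively, $F$, $F'$, $(x^{\alpha+1}F)'$, and $\int_0^x F$ against the term contributed to each cone condition by the reference function $\lambda x^{\frac{1}{\beta}-\alpha-1}$.

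With this in hand, nonnegativity is immediate: $G(x)\ge -C_0 a\,x^{-\delta}+\lambda x^{\frac{1}{\beta}-\alpha-1}\ge(\lambda-C_0 a)x^{\frac{1}{\beta}-\alpha-1}\ge 0$, using $\lambda\ge C_0 a$, which is covered by the entry $4$ in \eqref{eq:cone_cond}. For monotonicity I would differentiate, $G'(x)=F'(x)-\lambda(1+\alpha-\tfrac{1}{\beta})x^{\frac{1}{\beta}-\alpha-2}$, bound $|F'|$ and apply the second comparison to get $G'(x)\le\bigl(C_1 a-\lambda(1+\alpha-\tfrac{1}{\beta})\bigr)x^{\frac{1}{\beta}-\alpha-2}\le 0$ as soon as $\lambda\ge C_1 a/(1+\alpha-\tfrac{1}{\beta})$; this is exactly what the first entry $\frac{1}{1+\alpha-\frac{1}{\beta}}$ of the maximum provides.

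The condition that $x^{\alpha+1}G(x)=x^{\alpha+1}F(x)+\lambda x^{\frac{1}{\beta}}$ be increasing is where I expect the main difficulty: here the positive term contributes only the relatively slowly growing $\lambda\frac{1}{\beta}x^{\frac{1}{\beta}-1}$, and the relevant comparison $x^{\alpha-\delta}\le x^{\frac{1}{\beta}-1}$ is saturated at $x=1$, so there is the least slack. The plan is to bound $\bigl|(\alpha+1)x^\alpha F(x)+x^{\alpha+1}F'(x)\bigr|\le[(\alpha+1)C_0+C_1]a\,x^{\alpha-\delta}$, whence
\begin{align*}
\frac{d}{dx}\bigl(x^{\alpha+1}G(x)\bigr)\ge\Bigl(\lambda\tfrac{1}{\beta}-[(\alpha+1)C_0+C_1]a\Bigr)x^{\frac{1}{\beta}-1},
\end{align*}
which is nonnegative once $\lambda$ is taken large enough; using $(\alpha+1)C_0+C_1\le 2(C_0+C_1)$ this is the contribution governed by the entry $4$.

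Finally, for the integral bound one computes $m(G)=\lambda\int_0^1 x^{\frac{1}{\beta}-\alpha-1}\,dx=\lambda/(\tfrac{1}{\beta}-\alpha)$ (finite because $\tfrac{1}{\beta}-\alpha>0$), so that $\int_0^x G(t)\,dt=\int_0^x F(t)\,dt+m(G)\,x^{\frac{1}{\beta}-\alpha}$; bounding $\bigl|\int_0^x F(t)\,dt\bigr|\le\frac{C_0 a}{1-\delta}x^{1-\delta}\le\frac{C_0 a}{1-\delta}x^{\frac{1}{\beta}-\alpha}$ via the fourth comparison, the desired inequality $\int_0^x G\le a\,x^{\frac{1}{\beta}-\alpha}m(G)$ reduces to $\frac{C_0 a}{1-\delta}\le(a-1)\,\lambda/(\tfrac{1}{\beta}-\alpha)$, and since $a\ge 2$ gives $\frac{a}{a-1}\le 2$ this follows from $\lambda\ge\frac{2C_0(\frac{1}{\beta}-\alpha)}{1-\delta}$, i.e.\ from the third entry of the maximum. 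Combining the four resulting lower bounds on $\lambda$ recovers exactly the condition in \eqref{eq:cone_cond}, while $G\in C^2(0,1]\cap L^1([0,1])$ is clear from $\delta<1$ and $\tfrac{1}{\beta}-\alpha>0$; this would complete the proof.
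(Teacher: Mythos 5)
Your proposal follows the paper's proof step for step: both arguments verify the four defining conditions of $\cC_a(\gamma)$ for $G(x)=F(x)+\lambda x^{\frac{1}{\beta}-\alpha-1}$ (the paper's $\psi$) one at a time and extract from each a sufficient lower bound on $\lambda$. Your Steps 1, 2 and 4 --- nonnegativity, monotonicity, and the integral condition, including the computation $m(G)=\lambda(\tfrac1\beta-\alpha)^{-1}$ and the use of $a\ge 2$ to absorb the contribution of $\int_0^x F$ --- coincide with the paper's and are correct.

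The step that does not close as written is the third. Your displayed inequality
$$
\frac{d}{dx}\bigl(x^{\alpha+1}G(x)\bigr)\ge\Bigl(\lambda\tfrac{1}{\beta}-[(\alpha+1)C_0+C_1]a\Bigr)x^{\frac{1}{\beta}-1}
$$
is correct, but it gives nonnegativity only when $\lambda\ge\beta\,[(\alpha+1)C_0+C_1]\,a$; the factor $\beta$ cannot be discarded, because the positive term $\lambda x^{\frac{1}{\beta}}$ contributes only $\lambda\beta^{-1}x^{\frac{1}{\beta}-1}$ to the derivative, and the comparison $x^{\alpha-\delta}\le x^{\frac1\beta-1}$ is saturated at $x=1$. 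Your claim that this requirement is ``governed by the entry $4$'' of \eqref{eq:cone_cond}, i.e.\ by $\lambda\ge 4a(C_0+C_1)$, is therefore justified only when $\beta\le 2$ or so, whereas $\beta$ ranges over $[1,1/\alpha)$ and may be large. This is not merely slack in the estimate: taking $\alpha$ small, $\beta$ large, $\delta=\tfrac12$ and $F(x)=x^{-1/2}-2$, one checks that $\tfrac{d}{dx}(x^{\alpha+1}G(x))<0$ at $x=1$ when $\lambda$ is only of size $4a(C_0+C_1)$. To be fair, the paper's own Step 3 consists of the bare assertion that $\lambda\ge 4a(C_0+C_1)$ implies $\varphi'\ge0$, with no computation, so you have faithfully reproduced --- and in fact exposed --- the same weak point; a self-contained argument needs the entry $4$ replaced by a $\beta$-dependent quantity (roughly $2\beta$), which would then have to be tracked through the places where the lemma is invoked.
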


 \begin{remark}
  Note that 
  $\lambda x^{\frac{1}{\beta} - \alpha - 1 } 
  \in \cC_a(\gamma)$ if 
  $a \ge 1$.
 \end{remark}

 \begin{proof}[Proof of Lemma \ref{lem:belong_general}] Let us denote 
    \begin{align*}
      \psi(x) = 
      F(x)
    + \lambda x^{\frac{1}{\beta}
    - \alpha - 1}.
    \end{align*}
  \noindent\textbf{Step 1}:
  $\psi \ge 0$. This holds if $\lambda \ge 
  C_0 a$.
  
  \noindent\textbf{Step 2}:
  $\psi$ is 
  decreasing. We see that $\psi'(x) \le 0$ if 
  $$
  \lambda \ge \tfrac{C_1 a}{1 + \alpha - 
  \frac{1}{\beta}}.
  $$
    
  \noindent\textbf{Step 3:} 
  $x^{\alpha + 1}\psi(x)$ is 
  increasing. Let $\varphi(x) 
  = x^{\alpha + 1}\psi(x)$. Then,  
  $$
  \lambda \ge  a 4 ( C_0 + C_1) 
  \implies \varphi'(x) \ge 0.
  $$
  
  \noindent\textbf{Step 4:} 
  $\int_0^x \psi(t) \, dt \le 
    a
     x^{ \frac{1}{\beta} 
     - \alpha } m(\psi)$. 
     Since $m(F) = 0$, we have 
     $m(\psi) = 
     \int_0^1 \lambda x^{\frac{1}{\beta}
     - \alpha - 1} \, dx
     = \lambda ( \tfrac{1}{\beta} 
     - \alpha)^{-1}$. Hence, 
     \begin{align*}
     \int_0^x \psi(t) \, dt 
     &\le m(\psi) a x^{\frac{1}{\beta} - 1 }
     \biggl( 
     \tfrac{C_0 ( \frac{1}{\beta} - 
     \alpha) }{\lambda (1 - \delta)} 
     x^{ 1 - \delta + \alpha - \frac{1}{\beta} }
     + \tfrac{1}{a}
     \biggr) \\
     &\le 
     m(\psi) a x^{\frac{1}{\beta} - 1 }
     \biggl( 
        \tfrac{C_0 ( \frac{1}{\beta} - 
        \alpha) }{\lambda (1 - \delta)} 
     + \tfrac{1}{2}
     \biggr).
     \end{align*}
     It follows that 
     \begin{align*}
        \lambda \ge 
        \tfrac{2C_0
        ( \frac{1}{\beta} - \alpha )
        }{
         (1 - \delta)
        }
        \implies 
        \int_0^x \psi(t) \, dt \le 
    a
     x^{ \frac{1}{\beta} 
     - \alpha } m(\psi).
     \end{align*}
  
  \noindent\textbf{Conclusion:} 
  We obtain 
  $$
  \eqref{eq:cone_cond} \implies 
  F(x)
  + \lambda x^{\frac{1}{\beta}
  - \alpha - 1}
  \in \cC_a(\gamma)
  $$
  by combining  
  the bounds established in Steps 1 through 4.
  \end{proof}

We record 
in the following 
two lemmas some basic 
properties of the 
functions $X_{\gamma, i}$ 
($i=1,2$) and their 
derivatives, 
which are easy to verify. 


\begin{lem}[Properties of 
  $X_{\gamma, 1}$]\label{lem:x_alpha}
  The function $(\alpha, x) \mapsto X_{\gamma, 1} 
  (x)$ is $C^2$ on $(0, \infty) \times (0, 1]$. 
  If $B = [ \alpha_\ell, \alpha_u ] 
\times [1, \beta_u] \subset \mathfrak{D}$, 
  there exist  $C_i = C_i(B) 
  > 0$ such that for all $x \in (0,1]$ and all $\gamma \in B$, 
\begin{align*}
&|X_{\gamma, 1}(x)| \le C_0
x^{1+\alpha}( 1 - \log (x) ), 
\quad |X_{\gamma, 1}'(x)|
\le C_1 x^\alpha ( 1 - \log(x)), \\ 
&|X_{\gamma, 1}''(x)|
\le C_2 x^{\alpha-1} 
( 1 - \log(x)).
\end{align*}
\end{lem}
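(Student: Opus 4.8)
The plan is to verify each of the three bounds directly from the explicit formula
\[
X_{\gamma, 1}(x) = 2^{\alpha}\, g_{\gamma,1}(x)^{1+\alpha}\bigl(\log 2 + \log g_{\gamma,1}(x)\bigr),
\]
using the information about $g_{\gamma,1}$ already recorded in the excerpt, namely that $g_{\gamma,1}(x) \in [\tfrac{x}{2}, x]$ and
\[
\bigl| g_{\gamma,1}(x) - x(1 - 2^{\alpha} x^{\alpha}) \bigr| \le 2^{2\alpha} x^{2\alpha+1}
\qquad \forall x \in [0,1].
\]
First I would establish the $C^2$ regularity. Since $f_{\gamma,1}(y) = y(1 + 2^\alpha y^\alpha)$ is real-analytic in $(\alpha, y)$ on $(0,\infty) \times (0,1]$ with $f_{\gamma,1}'(y) = 1 + 2^\alpha(1+\alpha)y^\alpha \ge 1 > 0$, the inverse function theorem (with parameters) gives that $g_{\gamma,1}(x)$ is $C^\infty$, indeed real-analytic, jointly in $(\alpha, x)$ on $(0,\infty) \times (0,1]$; composing with the analytic maps $y \mapsto 2^\alpha y^{1+\alpha}$ and $y \mapsto \log 2 + \log y$ (both smooth on $y > 0$, and $g_{\gamma,1}(x) > 0$ for $x > 0$) yields that $(\alpha,x) \mapsto X_{\gamma,1}(x)$ is $C^2$ (in fact $C^\infty$) on $(0,\infty) \times (0,1]$.

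Next I would prove the pointwise bounds for $\gamma = (\alpha,\beta) \in B$. Write $y = g_{\gamma,1}(x)$, so $\tfrac x2 \le y \le x \le 1$. For the $C^0$ bound, $|X_{\gamma,1}(x)| = 2^\alpha y^{1+\alpha}|\log 2 + \log y| \le 2^{\alpha_u} y^{1+\alpha}(\log 2 - \log y)$; since $y \le x \le 1$ and $-\log y \le -\log(x/2) = \log 2 - \log x$, and $y^{1+\alpha} \le x^{1+\alpha}$, this is bounded by $C_0(B)\, x^{1+\alpha}(1 - \log x)$. For the derivative bounds I would differentiate $x \mapsto X_{\gamma,1}(x)$ using the chain rule, expressing everything in terms of $y$, $y' = g_{\gamma,1}'(x) = 1/f_{\gamma,1}'(y) = (1 + 2^\alpha(1+\alpha)y^\alpha)^{-1} \in (0,1]$, and $y'' = -f_{\gamma,1}''(y)(y')^3$ with $f_{\gamma,1}''(y) = 2^\alpha \alpha(1+\alpha)y^{\alpha-1}$. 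Concretely,
\[
X_{\gamma,1}'(x) = 2^\alpha y' \bigl[(1+\alpha)y^\alpha(\log 2 + \log y) + y^\alpha\bigr],
\]
and since $0 < y' \le 1$, $y^\alpha \le x^\alpha$, and $|\log 2 + \log y| \le C(1 - \log x)$ as above, we get $|X_{\gamma,1}'(x)| \le C_1(B)\, x^\alpha(1 - \log x)$. Differentiating once more produces a finite sum of terms each of the schematic form (constant depending on $\alpha$) times a product of powers of $y'$, $y$, $y^{\alpha-1}$ and a factor that is either bounded or of size $O(1-\log x)$; using $y \ge x/2$ only through the harmless factors $y^{\alpha-1} \le (x/2)^{\alpha-1} \le 2^{1-\alpha_\ell} x^{\alpha-1}$ (here one uses $\alpha \le \alpha_u < 1$, so $\alpha - 1 < 0$) and bounding all other factors as before yields $|X_{\gamma,1}''(x)| \le C_2(B)\, x^{\alpha-1}(1 - \log x)$.

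The only mildly delicate point — and the one I would be most careful about — is keeping the constants uniform over $\gamma \in B$: all the exponents $\alpha$, $\alpha+1$, $\alpha-1$ range over a compact subinterval determined by $[\alpha_\ell,\alpha_u]$, the factors $2^\alpha$, $(1+\alpha)$, $\alpha(1+\alpha)$ are bounded above and below on that interval, and the estimate $-\log y \le \log 2 - \log x$ together with $-\log x \ge 0$ (from $x \le 1$) ensures $1 - \log y$, $\log 2 + \log y$ etc. are all controlled by a fixed multiple of $1 - \log x$; none of this involves $\beta$ except through the compactness of $[1,\beta_u]$, which is irrelevant here since $X_{\gamma,1}$ does not depend on $\beta$. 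Collecting the terms and taking $C_i = C_i(B)$ to be the (finite) maximum of the constants arising in each schematic term completes the proof.
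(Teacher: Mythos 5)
Your proof is correct and is precisely the routine verification the paper leaves implicit (the lemma is stated there with the remark that it is "easy to verify" and no proof is given): joint smoothness of $g_{\gamma,1}$ via the inverse/implicit function theorem, followed by direct differentiation and the uniform bounds $\tfrac{x}{2}\le g_{\gamma,1}(x)\le x$ and $0<g_{\gamma,1}'(x)\le 1$. The handling of the logarithmic factor and the uniformity of constants over $\gamma\in B$ are both done carefully, so nothing further is needed.
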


\begin{lem}[Properties of $X_{\gamma, 2}$]\label{lem:x_beta}
    The function $(\beta, x) \mapsto X_{\gamma, 2}$ 
    is $C^2$ on $(0, \infty) \times (0,1]$.
    If $B = [ \alpha_\ell, \alpha_u ] 
\times [1, \beta_u] \subset \mathfrak{D}$, 
  there exist  $C_i = C_i(B) 
  > 0$ such that for all $x \in (0,1]$ and all $\gamma \in B$
  \begin{align*}
  |X_{\gamma, 2}(x)| \le 
  C_0 \log( x^{-1} ) x, \quad
  |X_{\gamma, 2}'(x)| \le
  C_1 [ 1 - \log(x)], 
  \quad
  |X_{\gamma, 2}''(x)| 
  \le C_2 \tfrac{1}{x}. 
\end{align*}
\end{lem}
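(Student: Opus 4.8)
The plan is to work directly from the closed form $X_{\gamma, 2}(x) = \tfrac{1}{\beta} x \log x$ recorded just above the statement, since once this is in hand everything reduces to one–variable calculus. First I would observe that $(\beta, x) \mapsto \tfrac{1}{\beta} x \log x$ is the product of $\beta \mapsto \tfrac1\beta$, which is $C^\infty$ on $(0,\infty)$, and $x \mapsto x \log x$, which is $C^\infty$ on $(0,1]$; hence $X_{\gamma, 2}$ is $C^\infty$, in particular $C^2$, on $(0,\infty) \times (0,1]$ (the only point where smoothness could fail is $x \to 0$, which is excluded from the domain).

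Next I would differentiate in the spatial variable, obtaining
\[
X_{\gamma, 2}'(x) = \tfrac1\beta(\log x + 1), \qquad X_{\gamma, 2}''(x) = \tfrac{1}{\beta x}.
\]
On $B = [\alpha_\ell, \alpha_u] \times [1, \beta_u]$ one has $\beta \ge 1$, so $\tfrac1\beta \le 1$, and for $x \in (0,1]$ one has $\log x \le 0$, so that $|\log x| = \log(x^{-1})$ and $|\log x + 1| \le 1 - \log x$. Combining these elementary facts gives
\[
|X_{\gamma, 2}(x)| = \tfrac1\beta x \log(x^{-1}) \le x \log(x^{-1}), \qquad
|X_{\gamma, 2}'(x)| \le \tfrac1\beta (1 - \log x) \le 1 - \log x, \qquad
|X_{\gamma, 2}''(x)| = \tfrac{1}{\beta x} \le \tfrac1x,
\]
which is exactly the assertion with $C_0 = C_1 = C_2 = 1$.

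There is no genuine obstacle here: the statement is a direct computation, and the only mild point worth flagging is that the constants may be taken uniform over $B$ (indeed absolute) because the sole parameter dependence enters through the harmless factor $\tfrac1\beta \le 1$; in particular $X_{\gamma,2}$ does not depend on $\alpha$ at all.
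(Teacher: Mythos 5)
Your proof is correct and is exactly the direct verification the paper intends: the paper states this lemma without proof, remarking only that the properties "are easy to verify" from the closed form $X_{\gamma,2}(x)=\tfrac{1}{\beta}x\log x$, and your computation of the derivatives and the elementary bounds (using $\beta\ge 1$ and $|\log x + 1|\le 1-\log x$ on $(0,1]$) fills this in correctly with absolute constants.
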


Next, we verify that $D_i$ is well-defined 
for $i \in \{1,2\}$.

\begin{lem}\label{lem:D_well_defined} Suppose 
that $\gamma \in \mathfrak{D}$ and that 
$\psi \in \cC_a(\gamma) \cap \cC_{b_1, b_2, b_3}^{(3)}$ for $a \ge a_0(\gamma)$ and $b_1,b_2,b_3$ as in 
Lemma \ref{lem:cone_in_c3}.
Let  
$i \in \{1,2\}$. Then, for all $\delta \in (0,1)$ 
there exist $\psi_{1}, \psi_2 \in 
\cC_a(\delta, \beta)$ 
such that 
\begin{align}\label{eq:decomp_di}
(X_{\gamma, i} \cN_{\gamma, i}
    (\psi)  )' = 
    \psi_{1} - \psi_2.
\end{align}
In particular, if $\gamma_1, \gamma \in 
B = [ \alpha_\ell, \alpha_u ] 
\times [1, \beta_u] \subset \mathfrak{D}$,
then 
for all $\phi \in L^\infty(m)$, 
\begin{align*}
    S_i = - \sum_{k=0}^{\infty} \int_0^1
    \phi \cdot \cL_{\gamma_1}^k[ ( 
        X_{\gamma, i} \cN_{\gamma, i}
    ( \psi )  )'  ] \, dm, \quad i \in \{1,2\},
  \end{align*}
is absolutely summable.
\end{lem}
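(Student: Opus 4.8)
The plan is to prove \eqref{eq:decomp_di} by explicitly computing the function $(X_{\gamma,i}\cN_{\gamma,i}(\psi))'$ and exhibiting it as a difference of two functions to which Lemma \ref{lem:belong_general} applies. First I would establish a pointwise bound: using the hypotheses $\psi \in \cC_a(\gamma)$ (so $0 \le \psi$ is decreasing, $x^{\alpha+1}\psi$ is increasing, and $\int_0^x \psi \le a x^{1/\beta-\alpha}m(\psi)$) together with the estimates on $X_{\gamma,i}$ and its derivatives from Lemma \ref{lem:x_alpha} (for $i=1$) and Lemma \ref{lem:x_beta} (for $i=2$), I would show that $F := (X_{\gamma,i}\cN_{\gamma,i}(\psi))'$ satisfies $|F(x)| \le C a x^{-\delta}$ and $|F'(x)| \le C a x^{-\delta-1}$ for a suitable $\delta \in (0,1)$ with $\delta < 1 + \alpha - \tfrac{1}{\beta}$; here one uses that $\cN_{\gamma,i}(\psi)$ behaves like $\psi$ near $0$ up to the Jacobian factor $g_{\gamma,i}'$, which is comparable to a constant for $i=2$ and to $1$ for $i=1$ near the origin, while the logarithmic factors in $X_{\gamma,i}$ are absorbed into the exponent by shrinking $\delta$ slightly. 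The key point making the estimate work is exactly the observation in the second remark of the introduction: $X_{\gamma,i}$ vanishes to higher order at $0$ than the compensating singularity of $h_\gamma$-type densities, so $F$ is only mildly singular.

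Second, since $X_{\gamma,i}\cN_{\gamma,i}(\psi)$ is $C^2$ on $(0,1]$ and vanishes at both endpoints $x=0$ and $x=1$ (at $x=0$ because $X_{\gamma,i}(x) \to 0$ and $\cN_{\gamma,i}(\psi)$ grows at most polynomially slower than $X_{\gamma,i}$ decays; at $x=1$ because $g_{\gamma,i}'$ or $X_{\gamma,i}$ can be checked directly), the fundamental theorem of calculus gives $m(F) = \int_0^1 (X_{\gamma,i}\cN_{\gamma,i}(\psi))' \, dm = 0$. Then I would apply Lemma \ref{lem:belong_general} with parameters $(\delta,\alpha,\beta)$: choosing $\lambda$ large enough (depending on $a$, $C_0$, $C_1$, and $\delta$ as in \eqref{eq:cone_cond}), both $F(x) + \lambda x^{1/\beta - \alpha - 1}$ and $\lambda x^{1/\beta - \alpha - 1}$ lie in $\cC_a(\gamma)$. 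However, the lemma as stated gives membership in $\cC_a(\gamma) = \cC_a(\alpha,\beta)$ whereas \eqref{eq:decomp_di} asks for $\psi_1, \psi_2 \in \cC_a(\delta,\beta)$; I would reconcile this either by re-running Lemma \ref{lem:belong_general} with the smaller exponent $\delta$ in place of $\alpha$ (noting $\cC_a(\delta,\beta) \subset \cC_a(\alpha,\beta)$ when $\delta \le \alpha$, and conversely the structural conditions are easier to satisfy with a larger power weight $x^{\delta+1}$ versus $x^{\alpha+1}$), or by tracking the exponent carefully so that the conclusion lands in the stated cone. Setting $\psi_1 = F + \lambda x^{1/\beta-\alpha-1}$ and $\psi_2 = \lambda x^{1/\beta-\alpha-1}$ then yields $F = \psi_1 - \psi_2$ as required.

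For the second assertion, that $S_i$ is absolutely summable, I would combine \eqref{eq:decomp_di} with the loss of memory estimate, Lemma \ref{lem:memory_loss}. Since $m(\psi_1) = m(\psi_2)$ (both equal $\lambda(\tfrac{1}{\beta}-\alpha)^{-1}$, as $m(F) = 0$) and $\psi_1, \psi_2 \in \cC_a(\delta,\beta)$ with $\gamma_1 = (\alpha_1,\beta_1) \in B$, Lemma \ref{lem:memory_loss} gives $\Vert \cL_{\gamma_1}^k(\psi_1 - \psi_2)\Vert_{L^1(m)} = \Vert \cL_{\gamma_1}^k F \Vert_{L^1(m)} \le C(m(\psi_1)+m(\psi_2)) k^{-\gamma_*}$ for suitable $\gamma_* > 1$ — provided $\delta$ was chosen small enough that $\tfrac{1}{\alpha_u}(\tfrac{1}{\beta_u} - \delta) > 1$, which is possible because $\tfrac{1}{\alpha_u}(\tfrac{1}{\beta_u} - \alpha_u) > 0$ might not already exceed $1$ but shrinking $\delta$ toward $0$ pushes this exponent up to nearly $\tfrac{1}{\alpha_u\beta_u}$, and one needs $\alpha_u\beta_u < 1$ to make that $> 1$. (If $\tfrac{1}{\alpha_u\beta_u} \le 1$ is not available one uses the weak bound \eqref{eq:ml_weak}, but in $\mathfrak{D}$ we have $\alpha_u\beta_u < 1$ so $\gamma_*$ can be taken $> 1$.) Then $|S_i| \le \Vert \phi \Vert_{L^\infty} \sum_{k\ge 0} \Vert \cL_{\gamma_1}^k F\Vert_{L^1(m)} \le C \Vert\phi\Vert_{L^\infty} \sum_{k\ge 0} k^{-\gamma_*} < \infty$.

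The main obstacle I anticipate is the first step — the pointwise derivative bounds $|F(x)| \le Ca x^{-\delta}$ and $|F'(x)| \le C a x^{-\delta-1}$ with a \emph{quantitative} $\delta < 1+\alpha-\tfrac{1}{\beta}$. Differentiating $X_{\gamma,i}\cN_{\gamma,i}(\psi)$ produces terms involving $\psi'$ (and for $F'$ even $\psi''$, but $\psi$ is only assumed in the cone $\cC_a$, not $C^2$, so one must be careful: the statement should be read with $\psi$ having enough regularity, or $F'$ bounds interpreted via the structural monotonicity rather than literal second derivatives — I would instead bound $|F'|$ by noting $F = (X_{\gamma,i}\cN_{\gamma,i}(\psi))'$ and that the relevant products are controlled by the cone conditions on $\psi$, and if needed restrict to $\psi \in \cC_a^{(2)}(\gamma)$ which is the case of interest since $h_\gamma \in \cC_a^{(2)}(\gamma_u)$ by Lemma \ref{lem:cone_inv}). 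Extracting the logarithm-free power bound requires replacing factors like $x^{1+\alpha}(1-\log x)$ by $x^{1+\alpha-\epsilon}$ and choosing $\epsilon$ so that the net exponent $-\delta$ still satisfies $\delta < 1+\alpha-\tfrac{1}{\beta}$; verifying this inequality has room to spare is the crux, and it does because $\cN_{\gamma,i}(\psi)(x) \lesssim a\,x^{1/\beta-\alpha-1}$ near $0$ (from the cone bound on $\int_0^x \psi$ via monotonicity) while $X_{\gamma,i}'(x) \lesssim x^{\alpha}(1-\log x)$, giving $F(x) \lesssim a\, x^{1/\beta - 1}(1-\log x) \lesssim a\, x^{-\delta}$ with $\delta$ slightly above $1 - \tfrac{1}{\beta}$, comfortably below $1 + \alpha - \tfrac{1}{\beta}$.
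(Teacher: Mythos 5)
Your proposal is correct and follows essentially the same route as the paper's proof: pointwise bounds $|F(x)|\lesssim a\,x^{1/\beta-1}(1-\log x)$ and $|F'(x)|\lesssim a\,x^{1/\beta-2}(1-\log x)$ obtained from the cone properties together with Lemmas \ref{lem:x_alpha}/\ref{lem:x_beta}, then Lemma \ref{lem:belong_general} applied with an arbitrarily small first parameter $\delta$ (so the decomposition lands in $\cC_a(\delta,\beta)$ with $\psi_2=\lambda x^{1/\beta-\delta-1}$), and finally Lemma \ref{lem:memory_loss} with $\gamma_*>1$ for $\delta$ small. Your side remark that bare membership of $\psi$ in $\cC_a(\gamma)$ does not license the second differentiation is apt; the paper's proof likewise invokes Lemma \ref{lem:cone_in_c3}, i.e.\ implicitly uses the extra regularity available for the case of interest $\psi=h_\gamma$.
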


\begin{remark}
  Since $h_\gamma \in \cC_a(\gamma)$, it 
  follows that $D_i$ is absolutely summable  
  for $i \in \{1,2\}$.
\end{remark}

\begin{proof}[Proof of 
  Lemma \ref{lem:D_well_defined}] For $i = 1$, writing 
    $u =
    ( X_{\gamma,1} \cN_{\gamma,1}
        (\psi)  )'$ we have 
    $m(u) = 0$. Moreover, by Lemmas 
    \ref{lem:x_alpha} 
    and \ref{lem:cone_in_c3}, there 
    exist $C_i = C_i(\gamma, a, b_1, b_2) > 0$ 
    such that 
    $$
    |u(x)| \le C_1 x^{\frac{1}{\beta} - 1} 
    (1 - \log (x)) 
    $$
    and 
    $$
    |u'(x)| \le C_2 x^{\frac{1}{\beta} - 2} 
    (1 - \log (x)).
    $$
    Hence, in Lemma \ref{lem:belong_general}
    we may take $\alpha$ arbitrary small. That 
    is, for any $\delta \in (0,1)$
    there exists $\lambda = 
    \lambda(\gamma, \delta, a, b_1, b_2) 
    > 0$ 
    such that 
    $$
    u(x) + \lambda x^{\frac{1}{\beta} 
    - \delta -1 } \in \cC_a(\delta, \beta),
    $$
    if $a \ge 2$. Since 
    $\lambda x^{\frac{1}{\beta} 
    - \delta -1 } \in \cC_a(\delta, \beta)$, 
    the decomposition \eqref{eq:decomp_di} 
    follows.

    For $i = 2$, we define 
    $$
    v =
    ( X_{\gamma, 2} \cN_{\gamma, 2}
        (\psi)  )'
    = X_{\gamma, 2}'\cN_{\gamma, 2}
    (\psi) + X_{\gamma, 2}\cN_{\gamma, 2}
    (\psi)'.
    $$
    Then $m(v) = 0$. The upper bound  
    $\psi(g_{\gamma, 2}(x)) 
    \le \psi(\tfrac12) \le 2^{\alpha+1}$ 
    combined with 
    Lemma \ref{lem:x_beta} yields 
    \begin{align*}
      | X_{\gamma, 2}'\cN_{\gamma, 2}
    (\psi)(x) |
    &\le (1 - \log(x)) \cdot g_{\gamma, 2}'(x) 
    \psi(g_{\gamma, 2}(x)) \\ 
    &\le (1 - \log(x)) \psi( \tfrac12 ) 
    \frac{1}{ T_\gamma'( g_{\gamma, 2}(x)  )} \\ 
    &\le C_3 x^{\frac{1}{\beta} - 1} 
    (1 - \log (x)),
    \end{align*}
    and, by Lemmas \ref{lem:x_beta} 
    and \ref{lem:cone_in_c3},
    \begin{align*}
      |X_{\gamma, 2}\cN_{\gamma, 2}(x)
      (\psi)'| \le C_4 (1 - \log(x)) x 
      \cdot a x^{-1} \cN_{\gamma, 2}(\psi)(x) 
      \le C_4 x^{\frac{1}{\beta} - 1} 
      (1 - \log (x)).
    \end{align*}
    Hence, $|v(x)| \le C_5 x^{\frac{1}{\beta} - 1} 
    (1 - \log (x))$. Similarly, using  
    Lemmas \ref{lem:x_beta} 
    and \ref{lem:cone_in_c3}, we see that 
    $|v'(x)| \le C_6 x^{\frac{1}{\beta} - 2} 
    (1 - \log (x))$.
    Again 
    the decomposition \eqref{eq:decomp_di} 
    follows 
    by Lemma \ref{lem:belong_general}.

    The decomposition \eqref{eq:decomp_di} 
    combined with the 
    first part of Lemma \ref{lem:memory_loss} 
    yields the 
    upper bound 
    $$
    \Vert \cL_{\gamma_1}^k[ ( 
        X_{\gamma, i} \cN_{\gamma, i}
    (\psi)  )'  ] \Vert_{L^1(m)} 
    \le 
    C ( m(\psi_1) + m(\psi_2) ) 
    k^{-\gamma_*},
    $$
    for some $C > 0$ determined by 
    $a, B, \gamma, \delta$ and 
    $$
    \gamma_* =  \tfrac{1}{\alpha_u} 
    (  \tfrac{1}{\beta_u} - \delta ).
    $$
    For sufficiently small $\delta > 0$ 
    we have $\tfrac{1}{\alpha_u} 
    (  \tfrac{1}{\beta_u} - \delta ) > 1$. It 
    follows that $S_i$ is absolutely summable 
    for $i \in \{1,2\}$.
\end{proof}

\section{Regularity properties of  
$\cL_{\gamma}(\varphi)
$}\label{sec:regularity}

For sufficiently regular $\varphi : [0,1] \to \bR$,  
we analyze properties of the partial derivatives 
of $\gamma \mapsto 
\cL_{\gamma} (\varphi) (x)$. Since the first 
branch of $T_\gamma$ does not depend on 
$\beta$ and the second branch does not 
depend on $\alpha$, we do not have to consider 
mixed derivatives.
As before, given 
$\alpha$ and $\beta$ we denote $\gamma = (\alpha, 
\beta)$.

\begin{lem}\label{
  lem:regularity_inverse_branch
} The functions 
  $(\alpha, y) 
  \mapsto g_{\gamma, 1}(y)$ 
  and $(\beta, y) \mapsto g_{\gamma, 2}(y)$ 
  are both 
  $C^4$ on $(0,\infty) \times (0,1)$, 
  and for all $x \in (0,1)$, $\alpha, \beta > 0$, 
  \begin{align}\label{eq:partial_g_1}
    \partial_i g_{\gamma, i}(x)
    = - \frac{X_{\gamma, i}(x)}{ f_{\gamma, i}' 
    (g_{\gamma, i} (x) ) },
    \quad i \in \{1,2\}.
  \end{align}
\end{lem}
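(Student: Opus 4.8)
The plan is to treat the two branches separately: the derivative formula \eqref{eq:partial_g_1} will come from differentiating the defining identity $f_{\gamma,i}(g_{\gamma,i}(x))=x$ in the parameter, while the $C^4$ regularity will come from the explicit formula for $g_{\gamma,2}$ and from the implicit function theorem for $g_{\gamma,1}$.

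For $i=2$ everything is explicit. Since $g_{\gamma,2}(x)=\tfrac12(x^{1/\beta}+1)$ and $x^{1/\beta}=\exp(\tfrac1\beta\log x)$ is jointly $C^\infty$ on $(0,\infty)\times(0,1)$, the map $(\beta,x)\mapsto g_{\gamma,2}(x)$ is $C^\infty$, in particular $C^4$. A direct computation gives $\partial_2 g_{\gamma,2}(x)=-\tfrac{1}{2\beta^2}x^{1/\beta}\log x$; to check this agrees with \eqref{eq:partial_g_1} I would use $g_{\gamma,2}(x)-\tfrac12=\tfrac12 x^{1/\beta}$ and $f_{\gamma,2}'(y)=\beta 2^{\beta}(y-\tfrac12)^{\beta-1}$, which give $f_{\gamma,2}'(g_{\gamma,2}(x))=2\beta\,x^{1-1/\beta}$ and hence $-X_{\gamma,2}(x)/f_{\gamma,2}'(g_{\gamma,2}(x))=-\bigl(\tfrac1\beta x\log x\bigr)/\bigl(2\beta x^{1-1/\beta}\bigr)$, exactly the expression above.

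For $i=1$, I would apply the implicit function theorem to
\[
F(\alpha,y,x)=f_{\alpha,1}(y)-x=y+2^{\alpha}y^{\alpha+1}-x .
\]
Writing $2^{\alpha}y^{\alpha+1}=\exp(\alpha\log 2+(\alpha+1)\log y)$ shows $F$ is $C^\infty$ on $(0,\infty)\times(0,\tfrac12)\times(0,1)$, and $\partial_y F=f_{\alpha,1}'(y)=1+2^{\alpha}(\alpha+1)y^{\alpha}\ge 1>0$ there. Since $f_{\gamma,1}$ restricts to a strictly increasing bijection $[0,\tfrac12]\to[0,1]$, its inverse $g_{\gamma,1}$ maps $(0,1)$ into $(0,\tfrac12)$, so the theorem applies at every point of the graph of $g_{\gamma,1}$; as $g_{\gamma,1}$ is the unique continuous solution of $F=0$, it coincides locally with the $C^\infty$ solution furnished by the theorem, whence $(\alpha,x)\mapsto g_{\gamma,1}(x)$ is $C^\infty$ (in particular $C^4$) on $(0,\infty)\times(0,1)$; its independence from $\beta$ is immediate from the formula for $f_{\gamma,1}$.

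It remains to establish \eqref{eq:partial_g_1} in general. I would differentiate the identity $f_{\gamma,i}(g_{\gamma,i}(x))=x$ with respect to the $i$-th parameter — legitimate because $(\gamma,y)\mapsto f_{\gamma,i}(y)$ is smooth for $y$ in the range of $g_{\gamma,i}$ and $g_{\gamma,i}$ has just been shown smooth in the parameter — and the chain rule gives
\[
(\partial_i f_{\gamma,i})(g_{\gamma,i}(x))+f_{\gamma,i}'(g_{\gamma,i}(x))\,\partial_i g_{\gamma,i}(x)=0 .
\]
Since $\partial_i f_{\gamma,i}=v_{\gamma,i}$ and $X_{\gamma,i}=v_{\gamma,i}\circ g_{\gamma,i}$ by definition, the first term is $X_{\gamma,i}(x)$, and dividing by $f_{\gamma,i}'(g_{\gamma,i}(x))\neq 0$ yields \eqref{eq:partial_g_1}. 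This lemma is essentially routine; the only point requiring care is to stay on the open set $(0,\infty)\times(0,1)$, where $g_{\gamma,1}(x)$ is bounded away from $0$ and $\tfrac12$, so that the logarithmic factors appearing in $f_{\gamma,1}$, $v_{\gamma,i}$ and $X_{\gamma,i}$ are smooth and no degeneracy occurs at the endpoints.
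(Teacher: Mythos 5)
Your proof is correct and follows the same route as the paper, which simply invokes the implicit function theorem for $F_i((\text{parameter},x),y)=f_{\gamma,i}(y)-x$; your extra details (the explicit check for $g_{\gamma,2}$ and the differentiation of $f_{\gamma,i}\circ g_{\gamma,i}=\mathrm{id}$ to get \eqref{eq:partial_g_1}) are exactly what the paper leaves implicit. No gaps.
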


\begin{proof}
  The result is an 
  immediate consequence of  
  the implicit function theorem 
  applied 
  with $F_1( (\alpha, x), y ) = f_{\gamma, 1} (y) - x$
  and $F_2( (\beta, x), y ) = f_{\gamma, 2} (y) - x$.
\end{proof}

\begin{lem}\label{lem:first_partial_L} 
  Suppose that $\varphi \in C^1((0,1], \bR)$. Then, 
for all $x \in (0,1)$, $\alpha, \beta > 0$, 
and $i \in \{1,2\}$,
\begin{align*}
  \partial_i \cL_{\gamma}(\varphi)(x)
  &= - X_{\gamma, i} ' \cN_{\gamma, i}(\varphi) (x)
  - X_{\gamma, i} \cN_{\gamma, i}( \varphi' / 
  f_{\gamma, i}' )(x)
  + X_{\gamma, i} \cN_{\gamma, i} 
  (\varphi f_{\gamma, i}'' / (f'_{\gamma, i})^2 )(x)
  \\
  &= - ( X_{\gamma, i} \cN_{\gamma, i}(\varphi) )'(x).
\end{align*}
In particular, 
$m( \partial_i \cL_{\gamma} (\varphi) ) = 0$ 
for $i=1,2$.
\end{lem}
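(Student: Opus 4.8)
The plan is to compute $\partial_i \cL_\gamma(\varphi)(x)$ directly from the definition $\cL_\gamma(\varphi) = \cN_{\gamma,1}(\varphi) + \cN_{\gamma,2}(\varphi)$, where $\cN_{\gamma,i}(\varphi)(x) = g_{\gamma,i}'(x)\,\varphi(g_{\gamma,i}(x))$. Since the first branch of $T_\gamma$ is independent of $\beta$ and the second branch is independent of $\alpha$, differentiating with respect to the $i$th parameter kills the term coming from the branch $j \ne i$, and we are left with $\partial_i \cL_\gamma(\varphi)(x) = \partial_i\big[ g_{\gamma,i}'(x)\,\varphi(g_{\gamma,i}(x)) \big]$. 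The key input is the formula from Lemma \ref{lem:regularity_inverse_branch}, namely $\partial_i g_{\gamma,i}(x) = - X_{\gamma,i}(x) / f_{\gamma,i}'(g_{\gamma,i}(x))$; note that the composition identity $X_{\gamma,i} = v_{\gamma,i} \circ g_{\gamma,i}$ is exactly what makes this clean.

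First I would apply the product rule in the $\partial_i$ variable:
\begin{align*}
\partial_i \cN_{\gamma,i}(\varphi)(x)
= \big(\partial_i g_{\gamma,i}'(x)\big)\,\varphi(g_{\gamma,i}(x))
+ g_{\gamma,i}'(x)\,\varphi'(g_{\gamma,i}(x))\,\partial_i g_{\gamma,i}(x).
\end{align*}
Since $g_{\gamma,i}(y)$ is $C^4$ jointly in the parameter and $y$ (Lemma \ref{lem:regularity_inverse_branch}), mixed partials commute, so $\partial_i g_{\gamma,i}'(x) = (\partial_i g_{\gamma,i})'(x) = -\big( X_{\gamma,i}(x) / f_{\gamma,i}'(g_{\gamma,i}(x)) \big)'$. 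Expanding this derivative with the chain rule (using $(f_{\gamma,i}' \circ g_{\gamma,i})' = (f_{\gamma,i}'' \circ g_{\gamma,i})\cdot g_{\gamma,i}'$ and $g_{\gamma,i}' = 1/(f_{\gamma,i}' \circ g_{\gamma,i})$) gives
\begin{align*}
\partial_i g_{\gamma,i}'(x) = - X_{\gamma,i}'(x)\, g_{\gamma,i}'(x) + X_{\gamma,i}(x)\,\frac{f_{\gamma,i}''(g_{\gamma,i}(x))}{f_{\gamma,i}'(g_{\gamma,i}(x))}\, g_{\gamma,i}'(x)^2.
\end{align*}
Substituting this and $\partial_i g_{\gamma,i}(x) = - X_{\gamma,i}(x)\, g_{\gamma,i}'(x)$ back into the product rule, and rewriting the three resulting terms using the operator notation $\cN_{\gamma,i}(\psi)(x) = g_{\gamma,i}'(x)\,\psi(g_{\gamma,i}(x))$, yields exactly
\begin{align*}
\partial_i \cL_\gamma(\varphi)(x) = - X_{\gamma,i}'\,\cN_{\gamma,i}(\varphi)(x) - X_{\gamma,i}\,\cN_{\gamma,i}(\varphi'/f_{\gamma,i}')(x) + X_{\gamma,i}\,\cN_{\gamma,i}(\varphi f_{\gamma,i}''/(f_{\gamma,i}')^2)(x).
\end{align*}

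It remains to recognize this as $-(X_{\gamma,i}\,\cN_{\gamma,i}(\varphi))'$. Differentiating the product $X_{\gamma,i}(x)\cdot\cN_{\gamma,i}(\varphi)(x) = X_{\gamma,i}(x)\, g_{\gamma,i}'(x)\,\varphi(g_{\gamma,i}(x))$ in the spatial variable $x$ produces precisely three terms: one hitting $X_{\gamma,i}$, one hitting $g_{\gamma,i}'$ (which introduces $g_{\gamma,i}''= -\,g_{\gamma,i}'^2\,(f_{\gamma,i}''\circ g_{\gamma,i})\,g_{\gamma,i}'$, matching the third term up to sign), and one hitting $\varphi\circ g_{\gamma,i}$ (matching the second term). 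Matching these term by term gives the second displayed equality. Finally, $m(\partial_i \cL_\gamma(\varphi)) = 0$ follows since $X_{\gamma,i}\,\cN_{\gamma,i}(\varphi)$ is (for $\varphi \in C^1((0,1])$) the difference of its values at the endpoints after integration, and $X_{\gamma,i}(x) \to 0$ as $x \to 0^+$ by Lemmas \ref{lem:x_alpha} and \ref{lem:x_beta} while $X_{\gamma,i}(1)\cN_{\gamma,i}(\varphi)(1)$ cancels appropriately — more cleanly, one can differentiate under the integral sign in $\int_0^1 \cL_\gamma(\varphi)\,dm = \int_0^1 \varphi\,dm$, which is parameter-independent, so its $\partial_i$ vanishes. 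The only mild obstacle is bookkeeping: keeping the three chain-rule terms aligned between the $\partial_i$ computation and the spatial-derivative computation, and checking the integrability/boundary behavior needed to justify $m(\partial_i\cL_\gamma(\varphi)) = 0$; the decay estimates on $X_{\gamma,i}$ near $0$ from Lemmas \ref{lem:x_alpha}–\ref{lem:x_beta} handle the latter.
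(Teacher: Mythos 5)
Your proposal is correct and follows essentially the same route as the paper: the product rule in the parameter, the formula $\partial_i g_{\gamma,i} = -X_{\gamma,i}/(f_{\gamma,i}'\circ g_{\gamma,i})$ from Lemma \ref{lem:regularity_inverse_branch}, and a regrouping of the three resulting terms into the $\cN_{\gamma,i}$ notation, followed by matching them against $-(X_{\gamma,i}\cN_{\gamma,i}(\varphi))'$. The only (cosmetic) difference is that you obtain $\partial_i g_{\gamma,i}'$ by commuting mixed partials, justified by the $C^4$ joint regularity, and then differentiating in $x$, whereas the paper derives the same identity \eqref{eq:partial_g_prime_1} from first principles via a difference quotient and Taylor expansion.
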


\begin{proof}
  These formulas are from \cite{baladi2014linear}; for 
  completeness, we give 
  a proof  in the case of 
  $\partial_2 
  \cL_{\gamma}(\varphi)(x)$, the proof 
  for $\partial_1 
  \cL_{\gamma}(\varphi)(x)$ being the same. As 
  the second branch of $T_\gamma$ is independent 
  of $\alpha$, in the following we will 
  denote $f_{\beta,2} = f_{\gamma, 2}$, 
  $g_{\beta, 2} = g_{\gamma, 2}$, 
  $X_{\beta, 2} = X_{\gamma, 2}$, and 
  $\cN_{\beta,2} = \cN_{\gamma,2}$, omitting 
  $\alpha$ from the subscripts.

  First, we verify that 
  \begin{align}\label{eq:partial_g_prime_1}
  \partial_2 g_{\beta, 2}'(x) 
  = - \frac{ X'_{\beta, 2}(x)}{ 
    f_{\beta,2}'(g_{\beta, 2} (x) ) } 
    + X_{\beta, 2}
     \frac{ f_{\beta, 2}''(g_{\beta, 2}(x)) }{
      (f_{\beta, 2}' ( g_{\beta, 2}(x) ) )^3
    }.
  \end{align}
  For $\delta > 0$ sufficiently small we have 
  $g_{\beta + \delta,2}'(x) - 
  g_{\beta,2}'(x) = I + II$, where 
  \begin{align*}
    I = \frac{ f'_{\beta, 2}( g_{\beta,2}(x) )
    - f'_{\beta + \delta, 2}( g_{\beta,2}(x) ) }{
      f'_{\beta + \delta,2}( g_{\beta,2}(x) )
      f'_{\beta,2}( g_{\beta,2}(x) )
    } \quad \text{and} \quad 
    II = \frac{ f'_{\beta + \delta,2}( 
      g_{\beta,2}(x) )
    - f'_{\beta + \delta,2}( 
      g_{\beta + \delta,2}(x) ) }{
      f'_{\beta + \delta,2}( 
        g_{\beta,2}(x) )f'_{\beta + \delta,2}( 
          g_{\beta + \delta,2}(x) )
    }. 
  \end{align*}
  By Taylor's theorem, for any $y \in (\tfrac12, 1)$, 
  \begin{align*}
    f_{\beta + \delta,
    2} ' (y) = f_{\beta,1} ' (y) + 
    \partial_2 f'_{\beta,2} (y)
    \cdot \delta + O(\delta^2),
  \end{align*}
  where $\partial_2 f'_{\beta,2} (y) = 
  (\partial_2 f_{\beta,2})' (y) 
  = v_{\beta,2}'(y)$, since 
  $(\beta, y) \mapsto f_{\beta,2}(y)$ is $C^2$.
  It follows that 
  \begin{align*}
    \lim_{\delta \to 0 }
    \frac{I}{\delta} = - 
    \frac{v_{\beta,2}'(g_{\beta,2}(x)) 
    }{ ( f_{\beta,2}'(g_{\beta,2} (x) ) )^2 } 
    = - \frac{ X'_{\beta,2}(x)}{ 
      f_{\beta,2}'(g_{\beta,2} (x) ) }.
  \end{align*}
  By a similar computation, using Taylor's 
  theorem together with
   $g_{\beta + \delta, 2}(y) - g_{\beta, 2}(y) 
   = O(\delta)$,
  we see that 
  \begin{align*}
    \lim_{\delta \to 0}
    \frac{II}{\delta} 
    = \frac{X_{\beta,2}}{ f_{\beta,2}'(g_{\beta,2}(x)) }
    \cdot \frac{ f_{\beta,2}''(g_{\beta,2}(x)) }{
      (f_{\beta,2}' ( g_{\beta,2}(x) ) )^2
    }.
  \end{align*}
  Hence, \eqref{eq:partial_g_prime_1} holds. Using 
  \eqref{eq:partial_g_1}, 
  we obtain 
  \begin{align*}
    \partial_2 \cL_{\alpha,\beta}(\varphi)(x)
    &= \partial_2 \cN_{\beta,2}(\varphi)(x)
    = \partial_2 g_{\beta,2}'(x) 
    \cdot \varphi(g_{\beta,2}
    (x))
    +  g_{\beta,2}'(x) \varphi'(g_{\beta,2}(x)) 
    \cdot 
    \partial_\alpha g_{\beta,2} (x) \\
    &= \partial_2 g_{\beta,2}'(x) 
    \cdot \varphi(g_{\beta,2}
    (x))
    - X_{\beta,2} \frac{\varphi'( g_{\beta,2}(x) )}
    {( f_{\beta,2}' ( g_{\beta,2}(x) ) )^2},
  \end{align*}
where the last term may be written as 
\begin{align*}
  X_{\beta,2} \frac{\varphi'( g_{\beta,2}(x) )}
    {( f_{\beta,2}' ( g_{\beta,2}(x) ) )^2}
    = X_{\beta,2} \cN_{\beta,2}( 
      \varphi' / f_{\beta,2}' )(x),
\end{align*}
and, using \eqref{eq:partial_g_prime_1}, the 
first term may be written as 
\begin{align*}
\partial_2 g_{\beta,2}'(x) \cdot 
\varphi(g_{\beta,2}
(x)) = - X_{\beta,2} ' \cN_{\beta,2}(\varphi) (x)
+ X_{\beta,2} \cN_{\beta,2} 
(\varphi f_{\beta,2}'' / (f'_{\beta,2})^2 )(x).
\end{align*}
\end{proof}

\begin{lem}\label{lem:partial_second_L}
  If $\varphi \in C^3((0,1])$, then
  $(\alpha, x) \mapsto 
  X_{\gamma, 1} \cN_{\gamma, 1}(\varphi)(x)$
  and 
  $(\beta, x) \mapsto 
  X_{\gamma, 2} \cN_{\gamma, 2}(\varphi)(x)$
  are $C^3$ on $(0, \infty) \times (0,1)$, 
  and for $i \in \{1,2\}$,
  \begin{align}\label{eq:second_partial_L_1}
  &\partial_i^2 \cL_{\gamma}(\varphi)(x) \notag
    \\&= ( \partial_i X_{\gamma, i}  
    \cN_{\gamma, i}(\varphi) )'(x)
    + X_{\gamma, i}' ( X_{\gamma, i} 
    \cN_{\gamma, i}(\varphi) )'(x)
    + X_{\gamma, i} ( X_{\gamma, i} 
    \cN_{\gamma, i}(\varphi) )''(x).
  \end{align}
Moreover, $m( \partial_i^2 
\cL_{\gamma}(\varphi) )
= 0$.
\end{lem}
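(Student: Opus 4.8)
The plan is to differentiate, once more in the relevant parameter, the first-order identity of Lemma~\ref{lem:first_partial_L}. As already noted at the start of this section, the first branch of $T_\gamma$ does not involve $\beta$ and the second does not involve $\alpha$, so $\partial_1\cL_\gamma=\partial_1\cN_{\gamma,1}$ and $\partial_2\cL_\gamma=\partial_2\cN_{\gamma,2}$; throughout, $\partial_i$ denotes differentiation in the single parameter on which $\cN_{\gamma,i}$ depends. Lemma~\ref{lem:first_partial_L} then reads
\begin{equation}\label{eq:prop_first_order}
\partial_i\cN_{\gamma,i}(\psi)=-\bigl(X_{\gamma,i}\,\cN_{\gamma,i}(\psi)\bigr)'\qquad\text{for }\psi\in C^1((0,1]),
\end{equation}
and the idea is to apply $\partial_i$ once more, with $\psi=\varphi$, interchange $\partial_i$ with $\partial_x$, and then invoke \eqref{eq:prop_first_order} a second time to eliminate the remaining $\partial_i\cN_{\gamma,i}(\varphi)$.

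First I would settle the regularity statement, which is what licenses the interchange of $\partial_i$ and $\partial_x$. For $i=1$ write $X_{\gamma,1}(x)=v_{\gamma,1}(g_{\gamma,1}(x))$ with $v_{\gamma,1}(y)=2^{\alpha}y^{1+\alpha}(\log 2+\log y)$ and $\cN_{\gamma,1}(\varphi)(x)=g_{\gamma,1}'(x)\,\varphi(g_{\gamma,1}(x))$. By Lemma~\ref{lem:regularity_inverse_branch} the map $(\alpha,x)\mapsto g_{\gamma,1}(x)$ is $C^4$ on $(0,\infty)\times(0,1)$, hence $(\alpha,x)\mapsto g_{\gamma,1}'(x)$ is $C^3$ there; since $v_{\gamma,1}$ is $C^\infty$ on $(0,\infty)\times(0,\infty)$ and, for $x\in(0,1)$, the $C^3$ function $\varphi$ is evaluated at the interior point $g_{\gamma,1}(x)>0$, the maps $X_{\gamma,1}$ and $\varphi\circ g_{\gamma,1}$ are $C^3$, and so is the product $X_{\gamma,1}\cN_{\gamma,1}(\varphi)$. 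The case $i=2$ is the same, using that $X_{\gamma,2}(x)=\tfrac1\beta x\log x$ and $g_{\gamma,2}(x)=\tfrac12(x^{1/\beta}+1)$ are $C^\infty$ on $(0,\infty)\times(0,1)$.

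With $W:=X_{\gamma,i}\cN_{\gamma,i}(\varphi)$, which by the previous paragraph is $C^3$ on $(0,\infty)\times(0,1)$, the right-hand side of \eqref{eq:prop_first_order} (with $\psi=\varphi$) is $C^2$ and its mixed partials commute, so
\begin{equation*}
\partial_i^2\cL_\gamma(\varphi)=\partial_i\bigl(-W'\bigr)=-\bigl(\partial_i W\bigr)'=-\bigl((\partial_i X_{\gamma,i})\,\cN_{\gamma,i}(\varphi)+X_{\gamma,i}\,\partial_i\cN_{\gamma,i}(\varphi)\bigr)'.
\end{equation*}
Substituting $\partial_i\cN_{\gamma,i}(\varphi)=-W'$ from \eqref{eq:prop_first_order} and expanding the two product rules, where $\bigl(X_{\gamma,i}W'\bigr)'=X_{\gamma,i}'(X_{\gamma,i}\cN_{\gamma,i}(\varphi))'+X_{\gamma,i}(X_{\gamma,i}\cN_{\gamma,i}(\varphi))''$, and collecting terms gives \eqref{eq:second_partial_L_1}; here one must keep careful track of the signs through the two applications of the product rule.

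For $m(\partial_i^2\cL_\gamma(\varphi))=0$ I would use the conservation of mass $m(\cL_\gamma(\varphi))=m(\varphi)$, which is constant in $\gamma$: differentiating it twice under the integral sign gives the claim, the differentiation being justified by an integrable $\gamma$-uniform majorant obtained from \eqref{eq:second_partial_L_1} together with Lemmas~\ref{lem:x_alpha} and \ref{lem:x_beta} and the regularity of $g_{\gamma,i}$ (the delicate region being a neighbourhood of the neutral fixed point). Alternatively, \eqref{eq:second_partial_L_1} exhibits $\partial_i^2\cL_\gamma(\varphi)$ as $\partial_x G$ for an explicit $G$ built from $X_{\gamma,i}$, $\partial_i X_{\gamma,i}$ and $W$, so $m(\partial_i^2\cL_\gamma(\varphi))=G(1^-)-G(0^+)$, and one checks $G(0^+)=G(1^-)=0$: at $x=0$ because $X_{\gamma,i}(x),\partial_i X_{\gamma,i}(x)\to0$ while $\cN_{\gamma,i}(\varphi)$ stays bounded, and at $x=1$ because $X_{\gamma,1}(1)=\partial_1 X_{\gamma,1}(1)=0$ (as $v_{\gamma,1}$ and its $\alpha$-derivative vanish at the critical value $\tfrac12=g_{\gamma,1}(1)$) and $X_{\gamma,2}(1)=0$. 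The only slightly delicate points I anticipate are this endpoint/integrability bookkeeping and keeping track of how many $x$-derivatives remain when commuting $\partial_i$ with $\partial_x$ — which is exactly why Lemma~\ref{lem:regularity_inverse_branch} is stated with a spare degree of smoothness; neither point is deep.
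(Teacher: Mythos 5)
Your proof is correct and follows essentially the same route as the paper's: $C^3$ joint regularity via Lemma \ref{lem:regularity_inverse_branch} to commute $\partial_i$ with $\partial_x$, a second application of the first-order formula of Lemma \ref{lem:first_partial_L} to eliminate $\partial_i\cN_{\gamma,i}(\varphi)$, and differentiation of a mass identity under the integral sign (justified by the majorants of Lemmas \ref{lem:bounds_on_second_1}--\ref{lem:bounds_on_second_2}, which is why both you and the paper are implicitly restricting the zero-mean claim to cone densities) for $m(\partial_i^2\cL_\gamma(\varphi))=0$. One caveat: your displayed computation, carried through correctly, yields $-\bigl((\partial_i X_{\gamma,i})\,\cN_{\gamma,i}(\varphi)\bigr)' + X_{\gamma,i}'\,(X_{\gamma,i}\cN_{\gamma,i}(\varphi))' + X_{\gamma,i}\,(X_{\gamma,i}\cN_{\gamma,i}(\varphi))''$, i.e.\ \eqref{eq:second_partial_L_1} with the first term negated; this is not an error on your part but a sign typo in the paper's statement (whose own proof is likewise sign-inconsistent), and it is harmless downstream since only absolute-value bounds on the three terms are ever used. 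Finally, in your alternative endpoint argument for the zero mean, the assertion that $\cN_{\gamma,i}(\varphi)$ stays bounded near $0$ is false for the relevant densities (it grows like $x^{1/\beta-\alpha-1}$ for $i=1$ and like $x^{1/\beta-1}$ for $i=2$ when $\beta>1$); what actually gives $G(0^+)=0$ is that the products of these factors with $X_{\gamma,i}$ and $\partial_i X_{\gamma,i}$ still tend to $0$.
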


\begin{proof} Using Lemma 
  \ref{ lem:regularity_inverse_branch } 
  we see that 
  $(\alpha, x) \mapsto 
  X_{\gamma, 1} \cN_{\gamma, 1}(\varphi)(x)$
  and 
  $(\beta, x) \mapsto 
  X_{\gamma, 2} \cN_{\gamma, 2}(\varphi)(x)$
  are $C^3$. Then, by Lemma 
  \ref{lem:first_partial_L}, 
  \begin{align*}
    \partial_i^2 \cL_{\gamma}(\varphi)(x) 
    &=
     [ \partial_i ( 
      X_{\gamma, i} \cN_{\gamma, i}(\varphi) )]'(x) \\
    &= 
    ( \partial_i X_{\gamma, i}  
    \cN_{\gamma, i}(\varphi) )'(x) 
    + X'_{\gamma, i}  \partial_i \cN_{\gamma, i}
    (\varphi)(x)
    + X_{\gamma, i}  ( \partial_i 
    \cN_{\gamma, i}(\varphi) )' (x),
  \end{align*}
  and now the desired formula for 
  $\partial_i^2 \cL_{\gamma}(\varphi)(x)$
  follows by another application of Lemma 
  \ref{lem:first_partial_L}. Finally, 
  we have 
  $m( \partial_i^2 
  \cL_{\gamma}(\varphi) )
  = \partial_i 
  m( \partial_i 
  \cL_{\gamma}(\varphi) )
  = 0$ by Lemma \ref{lem:first_partial_L}, 
  where the use of Leibniz integral 
  rule is justified by 
  \eqref{eq:bound_on_d2_1} 
  and \eqref{eq:bound_on_d2_2} below.
\end{proof}

\begin{lem}\label{lem:bounds_on_second_1}
  Suppose that 
  $\gamma = (\alpha, \beta) \in 
  B = [\alpha_\ell, \alpha_u] \times 
  [1, \beta_u] \subset \mathfrak{D}$.
  Let $\varphi \in \cC_{b_1,b_2,b_3}^{(3)}
  \cap C_{a}(\gamma_u)$, $m(\varphi) = 1$,  where 
  $a \ge a_0(\gamma_u)$ and $b_i$ are 
  as in Lemma \ref{lem:cone_in_c3}.
  Then, for any
  $0 < \widetilde{\alpha} 
  < \alpha$, there exist $C_i > 0$ depending only on 
  $B, a, b_1, b_2, b_3, \widetilde{\alpha}$, 
  such that 
\begin{align}\label{eq:bound_on_d2_1}
| \partial_1^2 \cL_{\gamma}(\varphi)(x)  | 
\le C_0 x^{-\kappa} 
\quad \text{and} 
\quad | ( \partial_1^2 \cL_{\gamma}(\varphi))' (x)  
| \le C_1 x^{-\kappa + 1}, 
\end{align}
where 
$$
\kappa <  1 + (\alpha_u - \widetilde{\alpha})
- \tfrac{1}{\beta_u} 
.
$$
In particular, there exist $\varphi_i \in 
\cC_a( \alpha_u - \widetilde{\alpha}, 
\beta_u)$ 
such that 
\begin{align}\label{eq:decomp_second_diff}
\partial_1^2 \cL_\gamma(\varphi) 
= \varphi_1 - \varphi_2 \quad 
\text{and} \quad 
\Vert \varphi_i \Vert_{L^1(m)} 
\le C_i',
\end{align}
where $C_i' > 0$ depend 
only on $B, a, b_1, b_2, b_3, \widetilde{\alpha}$.
\end{lem}

\begin{proof} To obtain 
  the desired upper bound on 
  $\partial_1^2 \cL_{\gamma}(\varphi)(x)$
  we have 
  to control each of the three terms 
  in \eqref{eq:second_partial_L_1}. 
  Let us denote them respectively by 
  $E_1$, $E_2$ and $E_3$. By a direct 
  computation, we see that 
  \begin{align*}
  | \partial_1 X_{\gamma, 1}(x)  | 
  \le C(B) x^{1 + \alpha} 
  ( 1 + \log(x^{-1}))^2
  \quad \text{and} \quad
  | \partial_1 X_{\gamma, 1}'(x) | \le C(B) 
  x^{\alpha} 
  (1 + \log(x^{-1}))^2.
  \end{align*}
  These upper bounds together with Lemmas 
  \ref{lem:x_alpha} and \ref{lem:cone_in_c3} 
  yield 
  \begin{align*}
  |E_1| &= |( \partial_1 X_{\gamma, 1}  
  \cN_{\gamma, 1}(\varphi) )'(x)| 
  \le C_1(B, a, b_1) x^{
  \frac{1}{\beta_u} -  
  \alpha_u - 1 + \alpha} 
  (  1 + \log(x^{-1}) )^2,
\end{align*}
\begin{align*}
    |E_2| &= |X_{\gamma, 1}' ( X_{\gamma, 1} 
  \cN_{\gamma, 1}(\varphi) )'(x)| \le 
  C_2(B, a, b_1) x^{
  \frac{1}{\beta_u} -  
  \alpha_u - 1 + 2\alpha} 
  (  1 + \log(x^{-1}) )^2,
\end{align*}
\begin{align*}
  |E_3| &= |X_{\gamma, 1} ( X_{\gamma, 1} 
  \cN_{\gamma, 1}(\varphi) )''(x)| \le 
C_3(B, a, b_1,b_2) x^{
\frac{1}{\beta_u} -  
\alpha_u - 1 + 2\alpha} 
(  1 + \log(x^{-1}) )^2.
\end{align*}
Hence the first inequality in  
\eqref{eq:bound_on_d2_1} holds. For the 
second inequality we have to differentiate 
each of the three terms $E_i$. We have 
$$
E_1' = ( \partial_1 X_{\gamma, 1}  
\cN_{\gamma, 1}(\varphi) )''(x) 
= \partial_1 X_{\gamma, 1}'' \cN_{\gamma, 1}(\varphi)
+ 2 \partial_1 X_{\gamma, 1}' \cN_{\gamma, 1}(\varphi)'
+ \partial_1 X_{\gamma, 1} \cN_{\gamma, 1}(\varphi)''.
$$
Using 
$
| \partial_1 X_{\gamma, 1}''(x)| \le 
C(B)  x^{\alpha-1} ( 1 + \log(x^{-1}))^2,
$
Lemma \ref{lem:cone_in_c3}, 
and the bounds on $\partial_1 X_{\gamma, 1}(x)$ 
and $ \partial_1 X_{\gamma, 1}'(x)$ above 
we 
see that  
$$
|E_1'| \le C_1(B, a, b_1, b_2) 
x^{ \frac{1}{\beta_u} -  
\alpha_u - 2 + \alpha} ( 1 + \log(x^{-1}))^2.
$$
Using Lemmas \ref{lem:x_alpha} and \ref{lem:cone_in_c3} 
we obtain $|E_j'| \le C_i(B, a, b_1, b_2, b_3) 
x^{ \frac{1}{\beta_u} -  
\alpha_u - 2 + \alpha} ( 1 + \log(x^{-1}))^2$ 
for $j=2,3$, so that the second inequality in 
\eqref{eq:bound_on_d2_1} holds. Now, 
\eqref{eq:decomp_second_diff} is an immediate 
consequence of Lemma \ref{lem:belong_general} 
and the first inequality in 
\eqref{eq:bound_on_d2_1}.
\end{proof}

\begin{lem}\label{lem:bounds_on_second_2}
  Suppose that 
  $ \gamma \in 
  B = [\alpha_\ell, \alpha_u] \times 
  [1, \beta_u] \subset \mathfrak{D}$.
  Let $\varphi \in \cC_{b_1,b_2,b_3}^{(3)}
  \cap C_{a}(\gamma_u)$, $m(\varphi) = 1$,  where 
  $a \ge a_0(\gamma_u)$ and $b_i$ are 
  as in Lemma \ref{lem:cone_in_c3}.
  Then for any $0 < \widetilde{\alpha}_u < \alpha_u$ 
  there exist $C_i > 0$ depending only on 
  $B, a, b_1, b_2, b_3, \widetilde{\alpha}_u$
  such that 
\begin{align}\label{eq:bound_on_d2_2}
| \partial_2^2 \cL_{\gamma}(\varphi)(x)  | 
\le C_0 x^{-\kappa} 
\quad \text{and} 
\quad | ( \partial_2^2 \cL_{\gamma}(\varphi))' (x)  
| \le C_1 x^{-\kappa + 1}, 
\end{align}
where 
$$
\kappa <  1 + \widetilde{\alpha}_u
- \tfrac{1}{\beta_u} 
.
$$
In particular, there exist $\varphi_i \in 
\cC_a( \widetilde{\alpha}_u, 
\beta_u)$ 
such that 
\begin{align}\label{eq:decomp_second_diff_2}
\partial_2^2 \cL_\gamma(\varphi) 
= \varphi_1 - \varphi_2 \quad 
\text{and} \quad 
\Vert \varphi_i \Vert_{L^1(m)} 
\le C_i',
\end{align}
where $C_i' > 0$ depend only on 
$B, 
  a, b_1, b_2, b_3, \widetilde{\alpha}_u$.
\end{lem}

\begin{proof} Denote 
  each of the three terms 
  in \eqref{eq:second_partial_L_1}
  respectively by 
  $E_1$, $E_2$ and $E_3$.
  The upper bounds 
  \begin{align*}
    &|\cN_{\gamma, 2}(\varphi)(x)| \le 
    C_1(B) x^{ \frac{1}{\beta_u} - 1 }, \\ 
  &| \partial_2 X_{\gamma, 2}(x)  | 
  \le C_2(B) \log(x^{-1}) x
  \quad \text{and} \quad
  | \partial_2 X_{\gamma, 2}'(x) | \le C_3(B) 
  (1 + \log(x^{-1}))
  \end{align*}
  combined with Lemmas 
  \ref{lem:x_beta} and \ref{lem:cone_in_c3} 
  yield 
  \begin{align*}
  |E_1| &= |( \partial_2 X_{\gamma, 2}  
  \cN_{\gamma, 2}(\varphi) )'(x)| 
  \le C_1(B, a, b_1) x^{
  \frac{1}{\beta_u} -  
   1 } 
  (  1 + \log(x^{-1}) ),
\end{align*}
\begin{align*}
    |E_2| &= |X_{\gamma, 2}' ( X_{\gamma, 2} 
  \cN_{\gamma, 2}(\varphi) )'(x)| \le 
  C_2(B, a, b_1) x^{
  \frac{1}{\beta_u}   
   - 1 } 
  (  1 + \log(x^{-1}) )^2,
\end{align*}
\begin{align*}
  |E_3| &= |X_{\gamma, 2} ( X_{\gamma, 2} 
  \cN_{\gamma, 2}(\varphi) )''(x)| \le 
  C_3(B, a, b_1, b_2) x^{
    \frac{1}{\beta_u} - 1 } 
    (  1 + \log(x^{-1}) )^2.
\end{align*}
Hence, the first inequality in  
\eqref{eq:bound_on_d2_2} holds. 
To obtain the 
second inequality in \eqref{eq:bound_on_d2_2}
we differentiate 
$E_1$, 
$$
E_1' = ( \partial_2 X_{\gamma, 2}  
\cN_{\gamma, 2}(\varphi) )''(x) 
= \partial_2 X_{\gamma, 2}'' \cN_{\gamma, 2}(\varphi)
+ 2 \partial_2 X_{\gamma, 2}' 
\cN_{\gamma, 2}(\varphi)'
+ \partial_2 X_{\gamma, 2} \cN_{\gamma, 2}(\varphi)''
$$
and observe that 
$
| \partial_2 X_{\gamma, 2}''(x)| \le C x^{-1}
$.
Using Lemma \ref{lem:cone_in_c3}, 
together with the bounds on 
$\cN_{\gamma, 2}(\varphi)(x)$,
$\partial_2 X_{\gamma, 2}(x)$, 
$ \partial_2 X_{\gamma, 2}'(x)$, and  
$\partial_2 X_{\gamma, 2}''$ above 
we arrive at the upper bound 
$$
|E_1'| \le C_1(B, a, b_1, b_2) 
x^{ \frac{1}{\beta_u} - 2} ( 1 + \log(x^{-1}))^2.
$$
By Lemmas \ref{lem:x_beta} 
and \ref{lem:cone_in_c3},
$|E_j'| \le C_i(B, a, b_1, b_2, b_3) 
x^{ \frac{1}{\beta_u} -  2} ( 1 + \log(x^{-1}))^2$ 
for $j=2,3$, so that the second inequality in 
\eqref{eq:bound_on_d2_2} holds. 
Now, 
\eqref{eq:decomp_second_diff_2} 
follows 
by Lemma \ref{lem:belong_general} combined 
with the first inequality in \eqref{eq:bound_on_d2_2}.
\end{proof}

\section{Proof of Theorem \ref{thm:main}
for $\phi \in L^\infty$
}
\label{sec:proof_main}

In this section we prove Theorem 
\ref{thm:main} for $\phi \in L^\infty(m)$.
The generalization to $L^q$ observables 
will be carried out in Appendix \ref{appendix:b}. 
The proof for $\phi \in L^\infty(m)$
is based 
on the approach of 
\cite{baladi2016linear}
and relies on results established  
in the previous sections. 

Let $\gamma = (\alpha, \beta) \in \mathfrak{D}$. 
The task 
is to show that 
$$
\cR_\phi(\gamma + \delta) -  
\cR_\phi(\gamma ) - D \cdot \delta
= o( \Vert \delta \Vert )
$$
as $\delta = 
(\delta_1, \delta_2) \to 0$, where 
$D = (D_1, D_2)$ and, for $i \in \{1,2\}$,
\begin{align*}
  D_i = - \sum_{k=0}^{\infty} \int_0^1
  \phi \cdot \cL_{\gamma}^k[ ( X_{\gamma, i} 
  \cN_{\gamma,i}
  (h_{\gamma})  )'  ] \, dm.
\end{align*}
By Lemma \ref{lem:D_well_defined}, 
$D_1$ 
and $D_2$ are well-defined. Let $\gamma_u = (\alpha_u, \beta_u) = (\alpha, \beta) + (\delta_*, \delta_*)$ 
and $\alpha_\ell = \alpha - \delta_*$, where $\delta_* > 0$ is sufficiently small such that 
$$
B := 
[ \alpha_\ell, \alpha_u ] \times [1, \beta_u]
\subset \mathfrak{D}.
$$
In the rest of the proof we only consider $\delta$ with $\gamma + \delta \in B$.

By Lemma \ref{lem:memory_loss},
\begin{align*}
  \cR_\phi(\gamma + \delta) -  
  \cR_\phi(\gamma )
  = \int_0^1 \phi \circ T^k_{\gamma + \delta} \, dm 
  - \int_0^1 \phi \circ T^k_\gamma \, dm
  + O ( \Vert \phi  \Vert_\infty k^{  1- \frac{1}{\alpha_u 
  \beta_u}   })
\end{align*}
for all $k \ge 1$, 
where the constant in the error term 
depends only on $\gamma$ and $\delta_*$. Let $\xi > 0$. 
Then there exists a constant 
$\mathfrak{c} = \mathfrak{c}(\gamma, \delta_*) > 0$, 
such that for 
$$
k \ge k_0(\xi, \delta, B) 
:= \lceil \mathfrak{c} \Vert \delta \Vert^{- (1 + \xi) 
\frac{\alpha_u\beta_u}{1 - \alpha_u\beta_u}} \rceil
$$
we have  
\begin{align*}
\frac{ \cR_\phi(\gamma + \delta) -  
\cR_\phi(\gamma ) - D \cdot \delta}{ \Vert \delta  \Vert}
= P_k + O( \Vert \phi \Vert_\infty 
\Vert \delta  \Vert^\xi ) ,
\end{align*}
where
\begin{align}\label{eq:p_k}
P_k 
:= \frac{
    \int_0^1 \phi \circ T^k_{\gamma + \delta} \, dm 
  - \int_0^1 \phi \circ T^k_\gamma \, dm
  - D \cdot \delta
}{  \Vert \delta \Vert }.
\end{align}
It remains to show that 
$P_k \to 0$ as $\delta \to 0$.

For  $\varphi \in C^3(0,1]$ and $x \in (0,1]$,
Lemma \ref{lem:partial_second_L} combined 
with Taylor's theorem
gives the formula
\begin{align*}
(\cL_{\gamma + \delta} - \cL_\gamma)(\varphi)(x)
&= \partial_1 \cL_\gamma (\varphi) (x) \delta_1
+ \partial_2 \cL_\gamma (\varphi) (x) \delta_2
\\
&+ \delta_1^2 \int_0^1 (1 - t) \partial_1^2 
\cL_{\gamma + \delta t} (\varphi) (x)  \, dt
+ \delta_2^2 \int_0^1 (1 - t) \partial_2^2 
\cL_{\gamma + \delta t} (\varphi) (x) \, dt.
\end{align*}
Thus, by writing
\begin{align}\label{eq:decomp_diff_int}
\int_0^1 \phi \circ T^k_{\gamma + \delta} \, dm 
  - \int_0^1 \phi \circ T^k_\gamma \, dm
= \sum_{j=0}^{k-1} \int_0^1
 \phi \cdot \cL_{\gamma+\delta}^j ( 
     \cL_{\gamma + \delta} - \cL_\gamma )
\cL_{\gamma}^{k-j-1} ( \mathbf{1} ) \, dm, 
\end{align}
we obtain the decomposition 
\begin{align}\label{eq:decomp_pk}
P_k = Q_k^{(1)} + Q_k^{(2)} + R_k^{(1)} + R_k^{(2)}, 
\end{align}
where
\begin{align*}
  &Q_k^{(i)} =
  \frac{ \sum_{j=0}^{k-1} \delta_i 
  \int_0^1 \phi \cdot 
  \cL_{\gamma + \delta}^j \partial_i \cL_\gamma ( \varphi_j ) \, dm - D_i \delta_i   }{
    \Vert \delta \Vert
  }, \\
  &R_k^{(i)}
  = \frac{
    \sum_{j=0}^{k-1} \int_{0}^1 \int_{0}^1
    \phi \cdot \cL_{\gamma + \delta}^j \partial_i^2 \cL_{\gamma + \delta t}(\varphi_j)
     \, dm \cdot (1 - t) \delta_i^2 \, dt  
  }{ \Vert \delta \Vert},
\end{align*}
and 
$
\varphi_j = \cL_{\gamma}^{k-j-1}( \mathbf{1} ).
$ 
The interchange of the order of integration 
in $R_k^{(i)}$ can be justified by applying 
Fubini's theorem together with 
Lemmas \ref{lem:bounds_on_second_1} 
and \ref{lem:bounds_on_second_2}.

\subsection{Limit of $R_k^{(i)}$} We 
show that if $\delta_*$ is 
sufficiently small, then 
\begin{align}\label{eq:estim_on_Rk}
\sup_{ \Vert \delta \Vert \le \delta_* , \Vert  \delta' \Vert \le \delta_* }\sum_{j=0}^\infty
\Vert \cL_{ \gamma + \delta }^j 
\partial_i^2 \cL_{\gamma + \delta' }(\varphi_j)
\Vert_{L^1(m)} < \infty, 
\end{align}
which implies that for some $C > 0$, 
$$
|R_k^{(i)}|
\le \frac{ C \Vert \phi \Vert_\infty \delta_i^2 }{ \Vert \delta \Vert} 
\le C \Vert \phi \Vert_\infty \Vert \delta \Vert \to 0,  
\quad \text{as $\delta \to 0$}.
$$
Suppose $i=1$. By Lemma 
\ref{lem:partial_second_L}, 
$m(\partial_1^2 \cL_{\gamma + \delta' 
}(\varphi_j)) = 0$. Moreover, since
$\varphi_j \in 
\cC_a(\gamma_u) \cap \cC^{(3)}_{b_1,b_2,b_3}$, it follows from Lemma
\ref{lem:bounds_on_second_1} that 
\begin{align}\label{eq:decomp_rk}
\partial_1^2 \cL_{\gamma + \delta' }(\varphi_j)
= (\psi_1^{(\delta')} - m(\psi_1^{(\delta')}) - (\psi_2^{(\delta')} - m(\psi_2^{(\delta')})) , 
\end{align}
where $\psi_i^{(\delta')} \in 
\cC_{a}(  \delta_* , \beta_u )$ 
and $\Vert \psi_i^{(\delta')} \Vert_{L^1(m)} 
\le C_i(\gamma, \delta_*, a, b_1, b_2, b_3) < \infty$ 
for all $\delta'$ 
with $\Vert \delta' \Vert \le \delta_*$. 
Since $\cC_{a}(\gamma_u)$ 
contains constant functions, 
Lemma \ref{lem:memory_loss} implies the 
upper bound 
$$
\Vert \cL_{ \gamma + \delta }^j 
\partial_1^2 \cL_{\gamma + \delta' }(\varphi_j)
\Vert_{L^1(m)}
\le \sum_{t=1,2} \Vert 
\cL_{\gamma + \delta}^j( 
  \psi_t^{(\delta')} - m(\psi_t^{(\delta')}
)) \Vert_{L^1(m)} 
\le C j^{-\gamma_*},
$$
where $C = C(\gamma, \delta_*, a, b_1, b_2,b_3) > 0$ is a 
constant and 
$$
\gamma_* =  \tfrac{1}{\alpha_u}(
        \tfrac{1}{\beta_u} - \delta_* ).
$$
The obtained upper bound is summable over 
$j$ if $\delta_* 
< \tfrac{1}{\beta_u} - \alpha_u$, which holds 
whenever $\delta_*$ is sufficiently small. Consequently, 
\eqref{eq:estim_on_Rk} holds for 
$i = 1$. For $i=2$ the proof is the same, 
except that Lemma \ref{lem:bounds_on_second_2}
is applied instead of Lemma 
\ref{lem:bounds_on_second_1} to obtain 
the decomposition of 
$\partial_2^2 \cL_{\gamma + \delta' }(\varphi_j)$.

\subsection{Limit of $Q_k^{(i)}$}
To complete the proof of Theorem \ref{thm:main}, 
it remains to show that 
\begin{align*}
  |Q_k^{(i)}| &= \biggl|
\frac{ \sum_{j=0}^{k-1} \delta_i \int_0^1 \phi \cdot 
  \cL_{\gamma + \delta}^j \partial_i \cL_\gamma
   ( \varphi_j ) \, dm - D_i \delta_i   }{
    \Vert \delta \Vert
  } \biggr| \\
  &\le \biggl| \sum_{j=0}^{k-1}  \int_0^1 \phi \cdot 
  \cL_{\gamma + \delta}^j \partial_i \cL_\gamma
   ( \varphi_j ) \, dm - D_i  \biggr| \to 0
\end{align*}
as $\delta \to 0$, where 
$k = k_0(\xi, \delta, B) 
= \lceil \mathfrak{c} 
\Vert \delta \Vert^{- (1 + \xi) 
\frac{\alpha_u\beta_u}{1 - \alpha_u\beta_u}} \rceil$. 

Let $\ve > 0$. 
Recalling $\partial_i \cL_\gamma(\varphi_j)
= (X_{\gamma,i} \cN_{\gamma,i}(\varphi_j))'$, 
by Lemma
\ref{lem:D_well_defined}
there exists $N_\ve > 1$ 
such that whenever 
$\Vert \delta \Vert$ is sufficiently small, 
i.e. $k$ is sufficiently large, 
\begin{align*}
    &\biggl| \sum_{j=0}^{k-1}  \int_0^1 \phi \cdot 
    \cL_{\gamma + \delta}^j \partial_i \cL_\gamma
     ( \varphi_j ) \, dm - D_i \biggr| \\
     &\le \biggl|
      \sum_{j=0}^{N_\ve}  \int_0^1 \phi \cdot 
     \cL_{\gamma + \delta}^j [
      (X_{\gamma,i} \cN_{\gamma,i}(\varphi_j))'
     ] \, dm - \sum_{j=0}^{N_\ve} 
      \int_0^1
      \phi \cdot \cL_{\gamma}^j[ ( X_{\gamma,i} 
      \cN_{\gamma,i}
      (h_{\gamma})  )'  ] \, dm \biggr|
      + \tfrac{\ve}{2} \\
    &= \biggl| \sum_{j=0}^{N_\ve} \int 
    \phi \cdot  ( \cL_{\gamma + \delta}^j 
    - \cL_\gamma^j  ) 
    [
      (X_{\gamma,i} \cN_{\gamma,i}(\varphi_j))'
     ] \, dm  \biggr|
      \\
    &+ \biggl| \sum_{j=0}^{N_\ve} 
    \int_0^1
    \phi \cdot \cL_{\gamma}^j[ 
        (X_{\gamma,i} \cN_{\gamma,i}(\varphi_j))'
        -   
    ( X_{\gamma,i} 
    \cN_{\gamma,i}
    (h_{\gamma})  )'  ] \, dm \biggr| + \tfrac{\ve}{2},
  \end{align*}
  where we recall $
  \varphi_j = \cL_{\gamma}^{k-j}( \mathbf{1} )
  $. Hence, it suffices to show that whenever 
  $k$ is sufficiently large, 
  \begin{align}\label{eq:to_show_last_1}
    \Vert ( \cL_{\gamma + \delta}^j 
    - \cL_\gamma^j  ) 
    [
      (X_{\gamma,i} \cN_{\gamma,i}(\varphi_j))'
     ]
    \Vert_{L^1(m)}
    \le \widetilde{\ve}   
    \quad  \forall  \, 0 \le j \le N_\ve,
  \end{align}
  and 
  \begin{align}\label{eq:to_show_last_2}
    \Vert
    (X_{\gamma,i} \cN_{\gamma,i}(\varphi_j))' -   
    ( X_{\gamma,i} 
    \cN_{\gamma,i}
    (h_{\gamma})  )' 
    \Vert_{L^1(m)} \le \widetilde{\ve}    
    \quad  \forall  \, 0 \le j \le N_\ve,
  \end{align}
  where 
  $
  \widetilde{\ve} = 
  \tfrac{\varepsilon}{4 N_\ve (1 + \Vert 
    \phi \Vert_\infty) }.
  $

\subsubsection{$i=1$} 
For \eqref{eq:to_show_last_1}, we use  
Lemma 
\ref{lem:D_well_defined} to decompose 
$$
(X_{\gamma,1} \cN_{\gamma,1}(\varphi_j))'
= \psi_1^{(j)} - \psi_2^{(j)}
$$
for some $\psi_r^{(j)} \in \cC_a^{(2)}(\gamma)$. 
Hence, 
\begin{align*}
  &\Vert ( \cL_{\gamma + \delta}^j 
  - \cL_\gamma^j  ) 
  [
    (X_{\gamma,1} \cN_{\gamma,1}(\varphi_j))'
   ] \Vert_1 \\ 
  &\le \sum_{\ell=0}^{j} \sum_{r=1,2}
  \Vert 
  ( \cL_{\gamma + \delta}
   - \cL_{\gamma} ) \cL_\gamma^{\ell} \psi_r^{(j)} 
   \Vert_{L^1(m)} \\
  &= \sum_{\ell=0}^{j} \sum_{r=1,2}
  \biggl\Vert \delta_1 
  \int_0^1 \partial_1 \cL_{\gamma + \delta t } 
  \cL_\gamma^{\ell} \psi_r^{(j)}  dt  
  + 
  \delta_2 
  \int_0^1 \partial_2 \cL_{\gamma + \delta t } 
  \cL_\gamma^{\ell} \psi_r^{(j)}  dt 
   \biggr\Vert_{L^1(m)} \\
  &\le 
  \sum_{\ell=0}^{j} \sum_{r=1,2} \biggl[
  | \delta_1 |
  \int_0^1 \Vert \partial_1 \cL_{\gamma + \delta t } 
  \cL_\gamma^{\ell} \psi_r^{(j)} \Vert_{L^1(m)}  dt \\
  &+ 
  | \delta_2 |
  \int \Vert \partial_2 \cL_{\gamma + \delta t } 
  \cL_\gamma^{\ell} \psi_r^{(j)} \Vert_{L^1(m)}  dt 
  \biggr],
\end{align*}
where the  $L^1$ contraction property 
of $\cL_\gamma$ was used in the first inequality.
Now $
\cL_\gamma^{\ell} \psi_r^{(j)} \in 
\cC_a^{(2)}(\gamma)$ so that, 
by Lemmas \ref{lem:x_alpha} and \ref{lem:first_partial_L}
$$
\Vert \partial_i \cL_{\gamma + \delta t } 
  \cL_\gamma^{\ell} \psi_r^{(j)} \Vert_{L^1(m)} 
  \le C(\gamma, \delta_*,  a) < \infty.
$$
This establishes \eqref{eq:to_show_last_1}.

To show \eqref{eq:to_show_last_2}, we first 
use Lemmas \ref{lem:x_alpha} and 
\ref{lem:memory_loss}  to derive the 
upper bound 
\begin{align*}
  \Vert X_{\gamma, 1}' (
    \cN_{\gamma, 1}(\varphi_j) 
    - \cN_{\gamma, 1}(h_\gamma)
   )
  \Vert_{L^1(m)}
  &\le 
  C(\alpha) \Vert \cN_{\gamma, 1}(\varphi_j) 
  - \cN_{\gamma, 1}(h_\gamma) \Vert_{L^1(m)} \\
  &\le C(\alpha) \Vert \varphi_j
  - h_\gamma \Vert_{L^1(m)}
   \le C(\gamma)  (k - N_\ve )^{ 
    1- \frac{1}{\alpha \beta }  },
\end{align*}
whenever $k > N_\ve$. Hence,
\begin{align*}
  &\Vert
  (X_{\gamma, 1} \cN_{\gamma, 1}(\varphi_j))' -   
  ( X_{\gamma, 1} 
  \cN_{\gamma, 1}
  (h_{\gamma})  )' 
  \Vert_{L^1(m)} \\ 
  &\le \Vert X_{\gamma, 1} 
  [\cN_{\gamma, 1} (\varphi_j)]' 
  -  X_{\gamma, 1} 
  [\cN_{\gamma, 1} (h_\gamma)]' 
  \Vert_{L^1(m)} 
  + C(\gamma)  (k - N_\ve )^{  
   1- \frac{1}{\alpha \beta }  }.
\end{align*}
The remaining term will be controlled using 
Lemma \ref{lem:distortion_returns} and
\eqref{eq:dist_sharp}.

Let $\ell > 1$ be an integer 
whose value will be fixed later. 
We decompose 
\begin{align}
  &\Vert X_{\gamma, 1} 
  [\cN_{\gamma, 1} (\varphi_j)]' 
  -  X_{\gamma, 1} 
  [\cN_{\gamma, 1} (h_\gamma)]' 
  \Vert_{L^1(m)} \notag \\
  &\le \int_0^{b_\ell} 
  | X_{\gamma, 1} 
  [\cN_{\gamma, 1} (\varphi_j)]' 
  -  X_{\gamma, 1} 
  [\cN_{\gamma, 1} (h_\gamma)]' | \, d
  m \label{eq:control_last_1}\\
  &+ 
  \int_{b_\ell}^{1} 
  | X_{\gamma, 1} 
  [\cN_{\gamma, 1} (\varphi_j)]' 
  -  X_{\gamma, 1} 
  [\cN_{\gamma, 1} (h_\gamma)]' | \, d
  m . \label{eq:control_last_2}
\end{align}
By Lemmas \ref{lem:x_alpha} 
and \ref{lem:cone_in_c3}, for 
$\varphi \in \{ \varphi_j, h_\gamma \}$,
we have
\begin{align*}
  \eqref{eq:control_last_1} &\le 
  C(\gamma, a, b_1)  \int_0^{b_\ell} 
  x^{1+\alpha}( 1 - \log ( x) ) \cdot 
  x^{-1} x^{ \frac{1}{\beta} - \alpha - 1 } \, dx \\ 
  &\le 
  C(\gamma, a, b_1) \int_0^{b_\ell} 
  ( 1 - \log ( x) ) \cdot 
   x^{ \frac{1}{\beta} - 1 } \, dx 
   \le C(\gamma, a, b_1) b_\ell^{ \frac{1}{\beta} } 
   (1 - \log ( b_\ell )) 
   \le C(\gamma, a, b_1) \ell^{ - \frac{1}{2\alpha\beta} }.
\end{align*}
Using $|X_{\gamma, 1}| \le C(\alpha) < \infty$, 
a change 
of variable, and Lemma \ref{lem:memory_loss}, 
we see that 
\begin{align*}
\eqref{eq:control_last_2} &\le 
C(\alpha) \int_{b_\ell}^{1} 
  |  
  [\cN_{\gamma, 1} (\varphi_j - h_\gamma)]' 
 | \, d
  m 
  \le C(\gamma) \ell^{ \frac{1}{\alpha} - 1 } 
  \Vert \varphi_j - h_\gamma  
  \Vert_{L^1(m)} + 
  C(\gamma) \int_{b_{\ell + 1}}^1 | 
   \varphi_j' - h_\gamma'  | \, dm \\ 
   & \le C(\gamma, \ell) 
   \biggl(
    (k - N_\ve )^{  
    1- \frac{1}{\alpha \beta }  } 
   + \int_{b_{\ell + 1}}^1 | 
   \varphi_j' - h_\gamma'  | \, dm \biggr).
\end{align*}
It remains to control 
\begin{align}\label{eq:integral_i}
I := \int_{b_{\ell + 1}}^1 | 
   \varphi_j' - h_\gamma'  | \, dm = 
   \int_{b_{\ell + 1}}^1 | 
     [\cL^m_\gamma(u)]'  | \, dm,
\end{align}
where $u = 
\cL_\gamma^{k - j - m}( \mathbf{1} - h_\gamma  )$ 
and $m = \round{\tfrac{k}{2}}$. 
We have 
$|
[ \cL^m_\gamma (u ) ]'(x) | 
\le E_1(x) + E_2(x)$, where 
\begin{align*}
   E_1(x) = \sum_{y \in T^{-m}_\gamma x } \frac{
    \frac{1}{(T^m_\gamma)' 
    y } | u'(y) | 
    }{(T^m_\gamma)' y }
= \cL_\gamma^m \biggl(  \tfrac{|u'|}{(T^m)'}   \biggr)(x) 
\quad \text{and} \quad 
E_2(x) = 
    \sum_{y \in T^{-m}_\gamma x } \frac{
      \frac{ (T^m_\gamma)''y }{((T^m_\gamma)' 
      y )^2} | u(y) | 
      }{(T^m_\gamma)' y }.
\end{align*}

\begin{claim}\label{claim:control_e1} There 
    exists $C = C(\gamma, \ell, a, b_1) > 0$ 
    such that 
\begin{align*}
    \int_{b_{\ell+1}}^1 E_1(x) \, dx 
    \le C k^{ - \frac{1}{(\alpha+1)\beta} }.
\end{align*}
\end{claim}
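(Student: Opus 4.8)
The plan is to turn $\int_{b_{\ell+1}}^1 E_1\,dx$ into an integral over the set of $m$-preimages of $[b_{\ell+1},1]$ and to estimate it by combining the distortion bounds of Section~\ref{sec:memory_loss} with cone bounds on $u'$. Since $E_1=\cL_\gamma^m(|u'|/(T^m_\gamma)')$ and all terms are nonnegative, Tonelli's theorem applied branch by branch to $T^m_\gamma$ yields
\[
\int_{b_{\ell+1}}^1 E_1(x)\,dx=\int_S\frac{|u'(y)|}{(T^m_\gamma)'y}\,dy,\qquad S:=\{y\in[0,1]:T^m_\gamma(y)\ge b_{\ell+1}\}.
\]
The key elementary observation is that $S\subset[b_{m+\ell+1},1]$: if $y<b_{m+\ell+1}$ then the first $m$ iterates of $y$ lie in $[0,\tfrac12]$, where $T_\gamma=f_{\gamma,1}$, so $T^m_\gamma(y)=f_{\gamma,1}^m(y)<f_{\gamma,1}^m(b_{m+\ell+1})=b_{\ell+1}$. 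Hence the integration stays uniformly away from the neutral fixed point.

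To bound $u'$ pointwise, note that $\mathbf{1}$ and $h_\gamma$ both belong to $\cC^{(3)}_{b_1,b_2,b_3}\cap\cC_a(\gamma)$ (Lemma~\ref{lem:cone_in_c3}; for $\mathbf 1$ this is immediate), hence so do $\cL_\gamma^{k-j-m}\mathbf 1$ and $h_\gamma$, both of mass $1$. From $\int_0^x f\le a x^{\frac1\beta-\alpha}$ together with monotonicity one gets $f(x)\le 2a\,x^{\frac1\beta-\alpha-1}$, and the first inequality defining $\cC^{(3)}$ then gives $|u'(y)|\le C(\gamma,a,b_1)\,y^{\frac1\beta-\alpha-2}$ on $(0,1]$. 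For the lower bound on $(T^m_\gamma)'y$ I would decompose $S$ along the Markov intervals $I_n=(b_{n+1},b_n)$ and $J_n=(\hat b_{n+1},\hat b_n)$. On $I_n\cap S$ with $n\le m-2$, where $T^{n+1}_\gamma$ maps $I_n$ onto $Y$, bounded distortion of the left branch (Lemma~\ref{lem:distortion}) and \eqref{eq:return_lb_1} give $(T^{n+1}_\gamma)'y\asymp(n+1)^{1+\frac1\alpha}$, and \eqref{eq:dist_first} applied from the return point yields $(T^m_\gamma)'y\gtrsim_\ell(n+1)^{1+\frac1\alpha}(m-1-n)^{1+\frac1{\alpha\beta}}$. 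On the $O(\ell)$ deep pieces $I_n$ with $m-1\le n\le m+\ell$, where $T^m_\gamma=f_{\gamma,1}^m$, bounded distortion of the left branch comparing $I_n$ with its image (an interval of length $\gtrsim_\ell 1$) gives the stronger bound $(T^m_\gamma)'y\gtrsim_\ell|I_n|^{-1}\gtrsim_\ell m^{1+\frac1\alpha}$. On $J_n\cap S$ one has $y\ge\tfrac12$, so $|u'|\le C$, and \eqref{eq:ind_lb}, the distortion bound \eqref{eq:distortion_2} and \eqref{eq:dist_first} give $(T^m_\gamma)'y\gtrsim_\ell(n+1)^{1+\frac1{\alpha\beta}}(m-1-n)^{1+\frac1{\alpha\beta}}$, the pieces with $m-1\le n\le m+\ell$ being handled directly.

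Finally I would insert these estimates, use $|I_n|\asymp n^{-\frac1\alpha-1}$ and $|J_n|\asymp n^{-\frac1{\alpha\beta}-1}$ from Lemma~\ref{lem:bn_bound} and the bound on $|u'|$, and sum. On each non-deep piece the powers of $(n+1)$ combine (the growth $|u'(y)|\lesssim y^{\frac1\beta-\alpha-2}$ near $0$ being absorbed by the factor $(n+1)^{1+\frac1\alpha}$), so that its contribution is $\lesssim_\ell(n+1)^{-1-\frac1{\alpha\beta}}(m-1-n)^{-1-\frac1{\alpha\beta}}$, while each deep piece contributes $\lesssim_\ell m^{-1-\frac1{\alpha\beta}}$; summing over $n$ gives $\int_{b_{\ell+1}}^1E_1\,dx\le C(\gamma,\ell,a,b_1)\,m^{-1-\frac1{\alpha\beta}}$. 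Since $m=\round{k/2}$ and $1+\frac1{\alpha\beta}>\frac1{(\alpha+1)\beta}$, this is in particular $\le Ck^{-\frac1{(\alpha+1)\beta}}$, proving the claim.

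The main obstacle is the control of $(T^m_\gamma)'y$ near the fixed point. The uniform estimate \eqref{eq:dist_first}, $(T^m_\gamma)'y\gtrsim_\ell m^{1+\frac1{\alpha\beta}}$, is by itself insufficient: combined with $|u'(y)|\lesssim y^{\frac1\beta-\alpha-2}$ and the fact that $S$ contains the whole interval $[b_{m+\ell+1},b_m]$, it only gives $\lesssim_\ell m^{\frac1\alpha(1-\frac2\beta)}$, which does not even tend to $0$ once $\beta\ge2$. One must exploit that on the deep part $T^m_\gamma$ is an iterate of the (essentially Liverani--Saussol--Vaienti) left branch, whose derivative is of order $m^{1+\frac1\alpha}$, and carry out the bookkeeping of which Markov pieces $S$ meets.
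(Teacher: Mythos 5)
Your argument is correct, and it follows the same skeleton as the paper's proof (change of variables to $\int_{T^{-m}[b_{\ell+1},1]}|u'|/(T^m_\gamma)'\,dm$, decomposition along the Markov pieces, cone bounds giving $|u'(y)|\lesssim y^{\frac1\beta-\alpha-2}$, and the distortion estimates of Section~\ref{sec:memory_loss}), but the key quantitative step is genuinely different and in fact sharper. The paper only retains the single factor $\min\{j,m\}^{-1-\frac1\alpha}$ from \eqref{eq:dist_sharp} on the pieces $I_j$ with $j\ge p$, which forces it to treat the shallow region $[b_p,\tfrac12]$ separately with the uniform bound \eqref{eq:dist_first} and then optimize over the cutoff $p$; the resulting competition between $m^{-1-\frac1{\alpha\beta}}p^{1+\frac1\alpha-\frac1{\alpha\beta}}$ and $p^{-\frac1{\alpha\beta}}$ is precisely what produces the exponent $\frac{1}{(\alpha+1)\beta}$ in the statement. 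You instead keep the product of the first-excursion factor $(n+1)^{1+\frac1\alpha}$ (from \eqref{eq:return_lb_1}) and the remaining-time factor $(m-1-n)^{1+\frac1{\alpha\beta}}$ (from \eqref{eq:dist_first} applied at the return point, legitimate since $T^m y\ge b_{\ell+1}$), which turns the sum over $n$ into a convolution of two sequences decaying like $n^{-1-\frac1{\alpha\beta}}$ and removes the cutoff altogether, yielding the stronger rate $C(\gamma,\ell,a,b_1)\,m^{-1-\frac1{\alpha\beta}}$; this implies the claim since $1+\frac1{\alpha\beta}>\frac{1}{(\alpha+1)\beta}$. The only point worth spelling out is the bounded distortion of the incomplete left-branch iterates $f_{\gamma,1}^m$ on the deep pieces $I_n$, $m-1\le n\le m+\ell$, which does not literally fall under \eqref{eq:distortion_1} for $n\ge m$ but follows from the same Koebe argument the paper uses for \eqref{eq:return_lb_3}. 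Your identification of the obstacle (that the uniform bound \eqref{eq:dist_first} alone fails near the fixed point once $\beta\ge2$) is exactly right and is the reason the paper introduces the cutoff $p$.
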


\begin{proof}[Proof of Claim \ref{claim:control_e1}]
For brevity, denote $T = T_\gamma$. We have 
\begin{align*}
\int_{b_{\ell+1}}^1 E_1(x) \, dx 
&= \int_{ T^{-m}[b_{\ell+1}, 1] } 
\frac{|u'(x)|}{(T^m)'x} \, dx \\ 
&= \sum_{j=0}^\infty 
\int_{ T^{-m}[b_{\ell+1}, 1] 
\cap [b_{j+1}, b_j]  } \frac{|u'(x)|}{(T^m)'x} 
\, dx + 
\int_{ T^{-m}[b_{\ell+1}, 1] \cap [\frac12, 1] }
\frac{|u'(x)|}{(T^m)'x} 
\, dx.
\end{align*}
Since $u$ is the difference of two 
densities in $\cC_a(\gamma) \cap 
\cC^{(3)}_{b_1,b_2,b_3}$, 
$|u'|$ is bounded on $[\tfrac12,1]$.
Hence, an application 
of \eqref{eq:dist_first} yields the upper bound 
$$
\int_{ T^{-m}[b_{\ell+1}, 1] \cap [\frac12, 1] }
\frac{|u'(x)|}{(T^m)'x} 
\, dx \le C(a, b_1, \gamma, \ell) 
m^{-1-\frac{1}{\alpha\beta}}.
$$

To control the remaining integral, we let 
$1 < p \ll m$ be an integer whose value will be fixed 
later, and decompose 
\begin{align}
&\sum_{j=0}^\infty 
\int_{ T^{-m}[b_{\ell+1}, 1] 
\cap [b_{j+1}, b_j]  } \frac{|u'(x)|}{(T^m)'x} 
\, dx \notag \\ 
&= \sum_{j=p}^\infty 
\int_{ T^{-m}[b_{\ell+1}, 1] 
\cap [b_{j+1}, b_j]  } \frac{|u'(x)|}{(T^m)'x} 
\, dx 
+ 
\int_{ T^{-m}[b_{\ell+1}, 1] 
\cap [b_{p}, \frac12]  } \frac{|u'(x)|}{(T^m)'x} 
\, dx. \label{eq:integrals_e1}
\end{align}
The second integral in 
\eqref{eq:integrals_e1} is estimated using 
properties 
of densities in $\cC_a(\gamma) \cap 
\cC^{(3)}_{b_1,b_2,b_3}$ together with 
\eqref{eq:dist_first} and Lemma 
\ref{lem:bn_bound}: for some 
constant $C = C(\gamma, \ell, a, b_1) > 0$,
\begin{align*}
    &\int_{ T^{-m}[b_{\ell+1}, 1] 
    \cap [b_{p}, \frac12]  } \frac{|u'(x)|}{(T^m)'x} 
    \, dx 
    \le C   
    m^{-1-\frac{1}{\alpha\beta}} 
    \int_{b_p}^{\frac12} x^{ \frac{1}{\beta} 
    - \alpha - 2 } \, dx \\ 
    &\le 
    C  
    m^{-1-\frac{1}{\alpha\beta}} 
    b_{p}^{ \frac{1}{\beta} 
    - \alpha - 1 } 
    \le C m^{-1-\frac{1}{\alpha\beta}} 
    p^{ 1 + \frac{1}{\alpha} - 
    \frac{1}{\alpha\beta}  }.
\end{align*}

For the first integral in \eqref{eq:integrals_e1}, 
note that if $x \in T^{-m}[b_{\ell+1}, 1] 
\cap [b_{j+1}, b_j]$, then starting 
from $x$ it will take $j+1$ 
iterates until the trajectory first enters 
$[\tfrac12, 1]$, and so in 
\eqref{eq:dist_sharp} we have $\ell_1 = j + 1$ 
if $j < m$.
It follows that, for some constant 
$C = C(\ell, \gamma) > 0$, 
$$
\frac{1}{(T^m)'x} \le 
C \min\{j, m\}^{-1-\frac{1}{\alpha}}
$$
holds whenever $j \ge 0$ and 
$x \in 
T^{-m}[b_{\ell+1}, 1] 
\cap [b_{j+1}, b_j]$. Then, 
using properties of densities 
in 
$\cC_a(\gamma) \cap 
\cC^{(3)}_{b_1,b_2,b_3}$
together with Lemma \ref{lem:bn_bound},
we see that 
\begin{align*}
    &\sum_{j=p}^\infty 
    \int_{ T^{-m}[b_{\ell+1}, 1] 
    \cap [b_{j+1}, b_j]  } \frac{|u'(x)|}{(T^m)'x} 
    \, dx \\ 
    &= 
    \sum_{j=p}^{m-1} 
    \int_{ T^{-m}[b_{\ell+1}, 1] 
    \cap [b_{j+1}, b_j]  } \frac{|u'(x)|}{(T^m)'x} 
    \, dx + \int_{ T^{-m}[b_{\ell+1}, 1] 
    \cap [0, b_m]  } \frac{|u'(x)|}{(T^m)'x} 
    \, dx \\ 
    &\le 
    \sum_{j=p}^{ \infty }C j^{-1 - \frac{1}{\alpha}}
    \int_{ [b_{j+1}, b_j]  } |u'(x)| 
    \, dx +  C m^{-1-\frac{1}{\alpha}} 
    \int_{ [b_{\ell+m+1}, 1]  } |u'(x)| 
    \, dx  \\ 
    &\le 
    C_1 \sum_{j=p}^\infty j^{-1 - \frac{1}{\alpha}}
    | [b_{j+1}, b_j] | b_{j+1}^{ \frac{1}{\beta} 
    - \alpha - 2  } 
    + C_2  m^{-1-\frac{1}{\alpha}} b_{\ell+m+1}^{
        \frac{1}{\beta} - \alpha - 1
    } \\ 
    &\le C_1 \sum_{j=p}^\infty j^{-2 - \frac{2}{\alpha}}
    j^{ \frac{2}{\alpha} + 1 - \frac{1}{\alpha\beta}  }
    + C_2  m^{-1-\frac{1}{\alpha}} 
    m^{ \frac{1}{\alpha} + 1 - \frac{1}{\alpha\beta} } \\ 
    &\le 
    C_1 p^{-\frac{1}{\alpha\beta}} + C_2
    m^{-\frac{1}{\alpha\beta}},
\end{align*}
where $C_i = C_i(\gamma, \ell, a, b_1) > 0$.

We have established 
$$
\int_{b_{\ell+1}}^1 E_1(x) \, dx 
    \le C \biggl( m^{-1-\frac{1}{\alpha\beta}} 
    + m^{-1-\frac{1}{\alpha\beta}} 
    p^{ 1 + \frac{1}{\alpha} - 
    \frac{1}{\alpha\beta}  } + 
    p^{ - \frac{1}{\alpha\beta} } + 
    m^{-\frac{1}{\alpha\beta}} \biggr).
$$
The desired upper bound follows by 
choosing $p = O( m^{\frac{\alpha\beta}{(\alpha+1)
\beta}} )$, and by recalling that 
$m = \round{\tfrac{k}{2}}$.
\end{proof}

\begin{claim}\label{claim:control_e2} There 
    exists $C = C(\gamma, \ell, a) > 0$ 
    such that 
\begin{align*}
    \int_{b_{\ell+1}}^1 E_2(x) \, dx 
    \le C (k - 2N_\ve )^{ 1 - \frac{1}{\alpha\beta} }.
\end{align*}
\end{claim}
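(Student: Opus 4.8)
The plan is to bound $\int_{b_{\ell+1}}^1 E_2\,dm$ by a constant times $\Vert u\Vert_{L^1(m)}$ using the second-order distortion estimate, and then read off the decay from the loss-of-memory lemma. First, by the change of variables $\int_{b_{\ell+1}}^1 E_2\,dm = \int_{T_\gamma^{-m}[b_{\ell+1},1]} \tfrac{(T_\gamma^m)''}{((T_\gamma^m)')^2}\,|u|\,dm$ (just the duality between $\cL_\gamma^m$ and composition with $T_\gamma^m$), everything is reduced to an integral over $\{\,y : T_\gamma^m(y)\ge b_{\ell+1}\,\}$. On that set the bound \eqref{eq:dist_second} of Lemma \ref{lem:distortion_returns}, applied with $\ell$ replaced by $\ell+1$ and uniformly in $m\ge 1$, gives $\tfrac{(T_\gamma^m)''(y)}{((T_\gamma^m)'(y))^2}\le C(\gamma,\ell)$, so that $\int_{b_{\ell+1}}^1 E_2\,dm\le C(\gamma,\ell)\,\Vert u\Vert_{L^1(m)}$. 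Note that, unlike the treatment of $E_1$ in Claim \ref{claim:control_e1}, no decomposition of $[0,1]$ into the dynamical intervals $(b_{j+1},b_j)$ is needed here, since $|u|$ is globally integrable, whereas $|u'|$ (which governs $E_1$) blows up near $0$.

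It then remains to estimate $\Vert u\Vert_{L^1(m)}$ with $u = \cL_\gamma^{n}(\mathbf{1}-h_\gamma)$, $n = k-j-m$. Since $0\le j\le N_\ve$ and $m=\round{k/2}$, one has $n\ge\lceil k/2\rceil-N_\ve\ge\tfrac12(k-2N_\ve)$, which is positive for $k$ large (and $k=k_0(\xi,\delta,B)\to\infty$ as $\delta\to 0$). Both $\mathbf{1}$ and $h_\gamma$ lie in $\cC_a(\gamma)$ for $a\ge a_0(\gamma)$, with unit mass, so Lemma \ref{lem:memory_loss} applies; invoking it for the single map $T_\gamma$ — i.e. with a box having $\gamma$ as its maximal element, so that the rate $n^{1-\frac{1}{\alpha_u\beta_u}}$ in \eqref{eq:ml_weak} becomes $n^{1-\frac{1}{\alpha\beta}}$ — yields $\Vert u\Vert_{L^1(m)}\le C(\gamma,a)\,n^{1-\frac{1}{\alpha\beta}}$. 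Since $1-\frac{1}{\alpha\beta}<0$ the power is decreasing in $n$, so $n\ge\tfrac12(k-2N_\ve)$ gives $\Vert u\Vert_{L^1(m)}\le C(\gamma,a)(k-2N_\ve)^{1-\frac{1}{\alpha\beta}}$ after absorbing a constant factor. Combining with the first step gives the claim with $C=C(\gamma,\ell,a)$.

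I do not expect a real obstacle: this argument is shorter than the one for $E_1$. The two points needing care are (i) invoking Lemma \ref{lem:memory_loss} in its sharp single-map form, so that the exponent involves $\tfrac{1}{\alpha\beta}$ and not $\tfrac{1}{\alpha_u\beta_u}$ — the same device used for $Q_k^{(1)}$ and in Claim \ref{claim:control_e1} — and (ii) the elementary but essential bookkeeping $n\ge\tfrac12(k-2N_\ve)$, which, together with the negative sign of the exponent, produces exactly the stated form of the bound; one should also check that \eqref{eq:dist_second} is applied on the set $T_\gamma^{-m}[b_{\ell+1},1]=\{y:T_\gamma^m(y)\ge b_{\ell+1}\}$.
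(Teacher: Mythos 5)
Your proof is correct and follows essentially the same route as the paper: bound $(T_\gamma^m)''/((T_\gamma^m)')^2$ by a constant on $T_\gamma^{-m}[b_{\ell+1},1]$ via \eqref{eq:dist_second}, reduce to $\Vert u\Vert_{L^1(m)}$ by duality (the paper phrases this as the $L^1$ contraction of $\cL_\gamma$ applied to $|u|$), and conclude with Lemma \ref{lem:memory_loss}. Your explicit bookkeeping $k-j-m\ge\tfrac12(k-2N_\ve)$ and the remark on invoking the single-map rate $n^{1-\frac{1}{\alpha\beta}}$ are exactly what the paper's terser statement implicitly uses.
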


\begin{proof}[Proof of Claim 
    \ref{claim:control_e2}] By 
    \eqref{eq:dist_second}, 
    \begin{align*}
        \int_{b_{\ell+1}}^1 E_2(x) \, dx 
        &=  \int_{b_{\ell+1}}^1 \sum_{y \in T^{-m}_\gamma x } \frac{
            \frac{ (T^m_\gamma)''y }{((T^m_\gamma)' 
            y )^2} | u(y) | 
            }{(T^m_\gamma)' y } \, dx  \\ 
        &\le C \int_{b_{\ell+1}}^1 \sum_{y \in T^{-m}_\gamma x } \frac{
             | u(y) | 
            }{(T^m_\gamma)' y } \, dx 
        = C \int_{0}^1 \cL_{\gamma}^m(|u|) \, dm
        \le C \Vert u \Vert_{L^1(m)}, 
    \end{align*}
where the $L^1$ contraction property of 
$\cL_\gamma$ was used in the last inequality.
Recalling $u = 
\cL_\gamma^{k - j - m}( \mathbf{1} - h_\gamma  )$ 
and $m = \round{\tfrac{k}{2}}$, it follows 
by an  application of 
Lemma \ref{lem:memory_loss} that 
$$
\int_{b_{\ell+1}}^1 E_2(x) \, dx  
\le C (k - 2j )^{ 1 - \frac{1}{\alpha\beta} }
\le C (k - 2N_\ve )^{ 1 - \frac{1}{\alpha\beta} }.
$$
\end{proof}

By Claims \ref{claim:control_e1} and 
\ref{claim:control_e2}, 
$$
I \le C  \biggl( k^{ - \frac{1}{(\alpha+1)\beta} } 
+ (k - 2N_\ve )^{ 1 - \frac{1}{\alpha\beta} }\biggr),
$$
and so it follows that 
$$
\eqref{eq:control_last_2} 
\le 
 C  \biggl( k^{ - \frac{1}{(\alpha+1)\beta} } 
+ (k - 2N_\ve )^{ 1 - \frac{1}{\alpha\beta} }\biggr),
$$
where $C = C(\gamma, \ell, a, b_1) > 0$.
Combining this with the estimate 
on \eqref{eq:control_last_1} obtained earlier, we 
see that 
\begin{align*}
&\Vert
(X_{\gamma,1} \cN_{\gamma,1}(\varphi_j))' -   
  ( X_{\gamma, 1} 
  \cN_{\gamma,1}
  (h_{\gamma})  )' 
  \Vert_{L^1(m)} \\
  &\le C_1(\gamma, a) \ell^{ - \frac{1}{2\alpha\beta}} + 
  C_2(\gamma, \ell, a, b_1)  \biggl( k^{ - \frac{1}{(\alpha+1)\beta} } 
  + (k - 2N_\ve )^{ 1 - \frac{1}{\alpha\beta} }\biggr).
\end{align*}
Now, fix $\ell$ sufficiently large such that the first 
term is smaller than 
$\widetilde{\ve}/2$. Then for all sufficiently 
large $k$ (depending on $\ell$) the second  
term is smaller than 
$\widetilde{\ve}/2$. We have established
\eqref{eq:to_show_last_2}.


\subsubsection{$i=2$}
We obtain \eqref{eq:to_show_last_1} as in 
the case $i = 1$, using Lemma 
\ref{lem:D_well_defined}. 
For \eqref{eq:to_show_last_2}, we first 
apply the estimate on 
$X_{\gamma,2}'$ from Lemma \ref{lem:x_beta} together 
with $|\cN_{\gamma, 2}(\varphi)(x)| \le 
C(\gamma) x^{ \frac{1}{\beta} - 1 }$ for 
$\varphi \in \{ \varphi_j, h_\gamma \}$ 
to obtain  
\begin{align*}
  &\Vert X_{\gamma, 2}' [
    \cN_{\gamma, 2}(\varphi_j) 
    - \cN_{\gamma, 2}(h_\gamma)
  ]
  \Vert_{L^1(m)} \\
  &\le 
  \int^{x_*}_0 | X_{\gamma, 2}' [
    \cN_{\gamma, 2}(\varphi_j) 
    - \cN_{\gamma, 2}(h_\gamma)
  ] | \, dx \\ 
  &+ \int_{x_*}^1 | X_{\gamma, 2}' [
    \cN_{\gamma, 2}(\varphi_j) 
    - \cN_{\gamma, 2}(h_\gamma)
  ] | \, dx \\
  & \le C(\gamma) \log(x_*^{-1}) x^{ \frac1\beta  }_*
  + 
  C(\gamma) \log( x_*^{-1} ) \Vert 
  \varphi_j
    - h_\gamma
  \Vert_{L^1(m)} 
\end{align*}
for any $x_* \in (0,1)$. Then, an application of 
Lemma \ref{lem:memory_loss} together 
with a suitable choice of $x_*$ yields
$$
\Vert X_{\gamma, 2}' [
    \cN_{\gamma, 2}(\varphi_j) 
    - \cN_{\gamma, 2}(h_\gamma)
  ]
  \Vert_{L^1(m)} \le 
  C(\gamma) (k - N_\ve )^{ \frac{1}{2} 
  ( 1- \frac{1}{\alpha \beta } ) }, 
$$
from which we see that 
\begin{align*}
  \eqref{eq:to_show_last_2} \le \Vert X_{\gamma, 2} 
  [\cN_{\gamma, 2} (\varphi_j)]' 
  -  X_{\gamma, 2} 
  [\cN_{\gamma, 2} (h_\gamma)]' 
  \Vert_{L^1(m)} + C(\gamma) (k - N_\ve )^{
     \frac{1}{2} 
  ( 1- \frac{1}{\alpha\beta } ) }.
\end{align*}

For the remaining term, as in the case 
$i = 1$, we 
let $\ell > 1$ be a number whose value is 
fixed later and decompose 
\begin{align}
  &\Vert X_{\gamma, 2} 
  [\cN_{\gamma, 2} (\varphi_j)]' 
  -  X_{\gamma, 2} 
  [\cN_{\gamma, 2} (h_\gamma)]' 
  \Vert_{L^1(m)} \notag \\
  &\le \int_0^{b_\ell} 
  | X_{\gamma, 2} 
  [\cN_{\gamma, 2} (\varphi_j)]' 
  -  X_{\gamma, 2} 
  [\cN_{\gamma, 2} (h_\gamma)]' | \, d
  m \label{eq:control_last_11}\\
  &+ 
  \int_{b_\ell}^{1} 
  | X_{\gamma, 2} 
  [\cN_{\gamma, 2} (\varphi_j)]' 
  -  X_{\gamma, 2} 
  [\cN_{\gamma, 2} (h_\gamma)]' | \, d
  m . \label{eq:control_last_22}
\end{align}
By Lemmas \ref{lem:x_beta} and 
\ref{lem:cone_in_c3} combined with 
$|\cN_{\gamma, 2}(\varphi)(x)| \le 
C x^{ \frac{1}{\beta} - 1 }$ for 
$\varphi \in \{ \varphi_j, h_\gamma \}$, 
we have 
\begin{align*}
  \eqref{eq:control_last_11} &\le 
  C \int_0^{b_\ell} 
  x  \log ( x^{-1})  \cdot 
  x^{-1} x^{ \frac{1}{\beta} - 1 } \, dx
  \le 
  C \int_0^{b_\ell} 
  \log ( x^{-1} )  \cdot 
   x^{ \frac{1}{\beta} - 1 } \, dx \\
   &\le C \log( b_\ell^{-1} ) 
   b_\ell^{ \frac{1}{\beta} }
   \le C \ell^{ - 
   \frac{1}{2}  \frac{1}{\alpha\beta}   },
\end{align*}
for some $C = C(a, b_1, \gamma) > 0$ 
independent of $\ell$.

Using $|X_{\gamma, 2}| \le C(\gamma)$ together 
with a change of variable, we obtain 
\begin{align*}
\eqref{eq:control_last_22} 
&\le 
C(\gamma)  \int_{b_\ell}^{1} 
  |  
  [\cN_{\gamma, 2} (\varphi_j - h_\gamma)]' 
 | \, dm \\
  &\le C(\gamma) \ell^{ \frac{1}{\alpha}( 2 - \frac{1}{\beta} ) } \Vert \varphi_j - h_\gamma 
  \Vert_{L^1(m)} + C(\gamma )
  \ell^{ \frac{1}{\alpha} (  1 - \frac{1}{\beta} ) }
  \int_{ \hat{b}_{\ell + 1} }^1 |  \varphi_j' - h_\gamma' | \, dm \\
  &\le C(\gamma) \ell^{ \frac{1}{\alpha}( 2 - \frac{1}{\beta} ) } \Vert \varphi_j - h_\gamma 
  \Vert_{L^1(m)} + C(\gamma )
  \ell^{ \frac{1}{\alpha} (  1 - \frac{1}{\beta} ) }
  \int_{ b_{\ell + 1} }^1 |  \varphi_j' - h_\gamma' | \, dm,
\end{align*}
and now it follows by applying Lemma 
\ref{lem:memory_loss} that 
\begin{align*}
\eqref{eq:control_last_22} &\le 
C(\gamma) \ell^{ \frac{1}{\alpha}( 2 
- \frac{1}{\beta} ) } 
(k - N_\ve )^{  
    1- \frac{1}{\alpha\beta}  } + C(\gamma )
  \ell^{ \frac{1}{\alpha} (  1 - \frac{1}{\beta} ) }
  \int_{ b_{\ell + 1} }^1 |  \varphi_j' - h_\gamma' | \, dm.
\end{align*}
The remaining integral is exactly the 
same as \eqref{eq:integral_i},
and thus to complete the proof of 
\eqref{eq:to_show_last_2} we may 
repeat the argument for $i=1$.


\appendix

\section{Proof of
 Lemma \ref{lem:cone_in_c3}}\label{appendix:a}

Since $ \cL_\gamma(\varphi) = \sum_{i=1,2}
\cN_{\gamma,i}(\varphi)$, it 
suffices to verify the statement concerning
$\cN_{\gamma,2}$.

\begin{lem}\label{lem:inv_na} There 
    exist (absolute) constants $\kappa_i > 1$
    such that, if 
    $b_1 \ge 1$, $b_2 \ge \kappa_1 b_1$, 
    and $b_3  \ge \kappa_2 b_2$, then 
    for all $\beta \ge 1$, 
  $
  \cN_{\gamma, 2} (
  \cC^{(3)}_{b_1,b_2,b_3} )
  \subset \cC^{(3)}_{b_1,b_2,b_3}.
  $
\end{lem}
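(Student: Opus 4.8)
The plan is to set $u := g_{\gamma,2}$, so that $u(x) = \tfrac12(x^{1/\beta}+1)$ maps $(0,1]$ diffeomorphically onto $(\tfrac12,1]$, and to write $\psi := \cN_{\gamma,2}\varphi$ in the form $\psi = u'\cdot w$ with $w := \varphi\circ u$. Then $\psi\ge 0$ is immediate ($u'>0$, $\varphi\ge 0$), and $\psi\in C^3((0,1])$ since $u$ is smooth on $(0,1]$ and $\varphi\in C^3$ on $(\tfrac12,1]$. I would compute $\psi^{(k)}$ by Leibniz applied to the product $u'\cdot w$,
$$
\psi^{(k)}=\sum_{j=0}^{k}\binom{k}{j}u^{(j+1)}w^{(k-j)},\qquad k=1,2,3,
$$
together with the Fa\`a di Bruno expansions $w'=\varphi'(u)u'$, $w''=\varphi''(u)(u')^2+\varphi'(u)u''$, $w'''=\varphi'''(u)(u')^3+3\varphi''(u)u'u''+\varphi'(u)u'''$.

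The whole argument rests on two elementary facts about $u$. First, since $u^{(k)}(x)=\tfrac12\tfrac1\beta(\tfrac1\beta-1)\cdots(\tfrac1\beta-k+1)x^{1/\beta-k}$, one has $x\,u^{(k+1)}(x)/u^{(k)}(x)=\tfrac1\beta-k$, whose modulus is at most $k$ because $\tfrac1\beta\in(0,1]$. Second, and more important, $x\,u'(x)=\tfrac1\beta\bigl(u(x)-\tfrac12\bigr)$, so that
$$
\frac{x\,u'(x)}{u(x)}=\frac1\beta\Bigl(1-\frac{1}{2u(x)}\Bigr)\in\Bigl[0,\tfrac1{2\beta}\Bigr]\subseteq\bigl[0,\tfrac12\bigr],
$$
using $u\in[\tfrac12,1]$. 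Dividing the formula for $\psi^{(k)}$ by $\psi=u'w$, multiplying by $x^{k}$, and substituting the cone inequalities $|\varphi^{(j)}(u)|\le b_j u^{-j}\varphi(u)$ (with the convention $b_0=1$), each summand turns into a product of factors of the two types just described; crucially, the contribution of $b_j$ always appears multiplied by $\bigl(x u'/u\bigr)^{j}\le 2^{-j}$. One then reads off scale-invariant bounds of the shape
$$
\frac{x|\psi'|}{\psi}\le\Bigl(1-\tfrac1\beta\Bigr)+\tfrac1\beta b_1,\qquad
\frac{x^2|\psi''|}{\psi}\le c_0+c_1 b_1+\tfrac14 b_2,\qquad
\frac{x^3|\psi'''|}{\psi}\le d_0+d_1 b_1+d_2 b_2+\tfrac18 b_3,
$$
with absolute constants $c_i,d_i$ (and no contribution of $b_2,b_3$, respectively $b_3$, beyond the displayed ones).

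The first bound already gives $|\psi'|\le b_1 x^{-1}\psi$ since $(1-\tfrac1\beta)+\tfrac1\beta b_1\le b_1$ whenever $b_1\ge1$. From the second bound, $|\psi''|\le b_2 x^{-2}\psi$ follows as soon as $\tfrac34 b_2\ge c_0+c_1 b_1$, which holds for $b_2\ge\kappa_1 b_1$ with an absolute $\kappa_1$ (using $b_1\ge1$); from the third, $|\psi'''|\le b_3 x^{-3}\psi$ follows as soon as $\tfrac78 b_3\ge d_0+d_1 b_1+d_2 b_2$, which holds for $b_3\ge\kappa_2 b_2$ with an absolute $\kappa_2$ (using $b_2\ge\kappa_1 b_1\ge\kappa_1$). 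Hence $\psi\in\cC^{(3)}_{b_1,b_2,b_3}$, proving the lemma. The case $\beta=1$, where $u''\equiv u'''\equiv 0$, is automatically included and is the easiest; the only genuine work is the bookkeeping in the seven-term third-derivative expansion, and I expect the main obstacle to be purely organizational --- distributing the $x^{k}$ scaling correctly over the $u^{(j+1)}$ and $\varphi^{(i)}(u)$ factors so that every term is manifestly scale-invariant and bounded by an absolute constant times one of $b_0,b_1,b_2,b_3$, with the coefficients of $b_2$ and $b_3$ landing strictly below $1$.
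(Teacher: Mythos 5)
Your proposal is correct and is in substance the same computation as the paper's: the paper's Appendix~\ref{appendix:a} proof also expands $\cN_{\gamma,2}(\varphi)^{(k)}$ via Leibniz/Fa\`a di Bruno and bounds each term by a scale-invariant ratio, with the decisive point being that the $\varphi^{(k)}$ contribution carries the factor $\bigl(x g_{\gamma,2}'/g_{\gamma,2}\bigr)^{k}=\beta^{-k}(1-\tfrac{1}{2y})^{k}\le 2^{-k}$, so the coefficient of $b_k$ stays strictly below $1$. The only difference is notational (you differentiate the explicit inverse branch $u=g_{\gamma,2}$ directly, while the paper phrases the same ratios through $T'$, $T''$, $T'''$, $T^{(4)}$ evaluated at $y=g_{\gamma,2}(x)$), and your verification of the resulting constants is sound.
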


\begin{proof} Let $\varphi \in \cC^{(3)}_{b_1,b_2,b_3}$. We identify conditions on 
  $b_1, b_2$ and $b_3$ that guarantee 
  $\cN_{\gamma, 2}(\varphi) \in \cC^{(3)}_{b_1,b_2,b_3}$. 
  Let $x \in (0,1)$. Throughout this 
  proof we denote 
  $y = g_{\gamma, 2}(x) \in [\tfrac12, 1]$ 
  and $T = T_\gamma$.
  
\noindent\textbf{Step 1:} $|\cN_{\gamma, 2}(\varphi)'(x)| \le \tfrac{b_1}{x} \varphi(x)$. We have
\begin{align*}
  | \cN_{\gamma, 2}(\varphi)'(x) | 
  &= \biggl| -\frac{ T_\gamma''(y) }{ (T'_\gamma(y))^3 } \varphi(y) + \frac{1}{(T'_\gamma(y))^2} \varphi'(y)
  \biggr| \\
  &\le \frac{\varphi(y) }{T_\gamma'(y)} \biggl\{ 
  \frac{T''_\gamma(y)}{(T_\gamma'(y))^2} + \frac{b_1}{y T_\gamma'(y)}  
  \biggr\} \\
  &= \frac{b_1}{x} \cN_{\beta, 2}(\varphi)(x) 
  \cdot \biggl\{ 
    \frac{T_\gamma(y)}{b_1} \biggl( \frac{T''_\gamma(y)}{(T_\gamma'(y))^2} 
    + \frac{b_1}{y T_\gamma'(y)}
    \biggr)
  \biggr\},
\end{align*}
where 
$\varphi \in \cC^{(3)}_{b_1,b_2,b_3}$ was used in the second inequality.
The term inside curly brackets is equal to 
\begin{align*}
  &\frac{2^\beta ( y - \tfrac12 )^{\beta}}{b_1} 
  \biggl[ 
    \frac{\beta - 1}{\beta 2^\beta ( y - \tfrac12)^\beta} + \frac{b_1}{y\beta 2^{\beta}
    (y- \tfrac12)^{\beta - 1}}
  \biggr] \\ 
  &= \frac{1}{b_1} (  1 - \frac{1}{\beta} ) + \frac{1}{\beta} - \frac{1}{2y\beta} \le 1,
\end{align*}
whenever $b_1 \ge 1$.

\noindent\textbf{Step 2:} $|\cN_{\gamma, 2}(\varphi)''(x)| \le \tfrac{b_2}{x^2} \varphi(x)$.
Using 
$\varphi \in \cC^{(3)}_{b_1,b_2,b_3}$, we see that 
\begin{align*}
  &|\cN_{\gamma, 2}(\varphi)''(x)| \\
  &\le \frac{b_2}{x^2} \cN_{\gamma, 2}(\varphi)(x)  
  \biggl\{ 
    \frac{(T(y))^2}{y^2} \frac{1}{ (T'(y))^2 } + 3 \frac{(T(y))^2}{b_2} \frac{b_1}{y} \frac{T''(y)}{(T'(y))^3} \\
    &+ \frac{T(y)^2}{b_2} \frac{|T'''(y)|}{(T'(y))^3} 
    + 3\frac{T(y)^2}{b_2} \frac{T''(y)^2}{(T'(y))^4}
  \biggr\}.
\end{align*}
The term inside curly brackets is equal to 
\begin{align*}
  &\frac{1}{\beta^2} \biggl( 1 - \frac{1}{2y} \biggr)^2 + 3 \frac{b_1}{b_2} \frac{y - \tfrac12}{y} 
  \frac{\beta - 1}{\beta^2} 
   + \frac{|\beta -2 |(\beta - 1)}{\beta^2 b_2} + \frac{3}{b_2} \biggl( 1 - \frac{1}{\beta} \biggr)^2 \\ 
  &\le \frac14 + \frac{3}{2} \frac{b_1}{b_2} + \frac{1}{b_2} + \frac{3}{b_2} \le 1,
\end{align*}
if $b_2 \ge 12b_1 \ge 1$.

\noindent\textbf{Step 3:} 
$|\cN_{\gamma,2}(\varphi)'''(x)| \le \tfrac{b_2}{x^3} \varphi(x)$. Again by using 
$\varphi \in \cC^{(3)}_{b_1,b_2,b_3}$, we see that 
\begin{align*}
    |\cN_{\gamma, 2}(\varphi)'''(x)| 
    &\le 
    \varphi(y) \biggl[ 
        10 \frac{T''(y)|T'''(y)|}{(T'(y))^6}
        + 5 \frac{(T''(y))^3}{(T'(y))^7}
        + \frac{|T^{(4)}(y)|}{(T'(y))^5}
        \biggr]   \\ 
&+ |\varphi'(y)| \biggl[
    15 \frac{(T''(y))^2}{(T'(y))^6}
    + 4 \frac{|T'''(y)|}{(T'(y))^5}
 \biggr] \\
 &+ 6 |\varphi''(y)| \frac{T''(y)}{(T'(y))^5}   
 + |\varphi'''(y)| \frac{1}{(T'(y))^4} \\ 
 &\le 
 \frac{b_3}{x^3} \cN_{\gamma, 2}(\varphi)(x) 
 \cdot
 \frac{10 (T(y))^3 }{b_3}\biggl[ 
     \frac{T''(y)|T'''(y)|}{(T'(y))^5}
        +  \frac{(T''(y))^3}{(T'(y))^6}
        + \frac{|T^{(4)}(y)|}{(T'(y))^4}
    \biggr] \\ 
&+ \frac{b_3}{x^3}
\cN_{\gamma, 2}(\varphi)(x) \cdot \frac{15 b_1
(T(y))^3}{b_3 y}
\biggl[
    \frac{(T''(y))^2}{(T'(y))^5}
    +  \frac{|T'''(y)|}{(T'(y))^4}
 \biggr] \\ 
 &+ \frac{b_3}{x^3}
 \cN_{\gamma, 2}(\varphi)(x) \cdot 
 \frac{6 b_2 (T(y))^3 }{b_3 y^2} 
 \frac{T''(y)}{(T'(y))^4} 
 + \frac{b_3}{x^3} 
 \cN_{\gamma, 2}(\varphi)(x) \cdot
 \frac{(T(y))^3}{y^3} \frac{1}{(T'(y))^3}  \\ 
 &\le \frac{b_3}{x^3}
 \cN_{\gamma, 2}(\varphi)(x) \biggl\{ 
    \frac{(T(y))^3}{y^3 (T'(y))^3}
    + \frac{15 b_2}{b_3} \biggl[ 
        \frac{ (T(y))^3 }{ y^2} 
 \frac{T''(y)}{(T'(y))^4} 
 \\ 
 &+ \frac{
 (T(y))^3}{ y}
 \biggl(
     \frac{(T''(y))^2}{(T'(y))^5}
     +  \frac{|T'''(y)|}{(T'(y))^4}
  \biggr) \\ 
  &+  (T(y))^3 \biggl( 
    \frac{T''(y)|T'''(y)|}{(T'(y))^5}
       +  \frac{(T''(y))^3}{(T'(y))^6}
       + \frac{|T^{(4)}(y)|}{(T'(y))^4}
   \biggr)
    \biggr]
    \biggr\}
\end{align*}
Denote $\xi = 1 - \tfrac{1}{2y} \le \tfrac12$. 
The term inside curly brackets is equal to 
\begin{align*}
    &\frac{1}{\beta}\xi^{3} + 
    \frac{15b_2}{b_3} \biggl[  
        \frac{\beta - 1}{\beta^3} \xi^2
        + \frac{(\beta - 1)^2}{\beta^3} \xi
        + \frac{|\beta - 2|(\beta - 1)}{\beta^3} 
        \xi + \\
        &\frac{|\beta - 1|(\beta-1)^2}{\beta^4}
        + \frac{(\beta-1)^3}{\beta^3} 
        + \frac{|\beta - 3||\beta - 2| (\beta - 1)}{
            \beta^3
        }
        \biggr] \\ 
        &\le 
        \frac{1}{8} + \frac{90 b_2}{b_3} \le 1, 
\end{align*}
whenever $b_3 \ge 103 b_2$.

\end{proof}

\section{Proof of Theorem 
\ref{thm:main} for $\phi \in L^q$}\label{appendix:b}

For $\phi \in L^q$ with  $q $ sufficiently large, the linear response formula of 
Theorem \ref{thm:main} 
can be shown by modifying the proof for 
$\phi \in L^\infty(m)$ using a trick 
which involves 
a change of measure and interpolation.

\subsection{Loss of memory in $L^p$} In this section we assume that 
$\gamma = (\alpha, \beta) \in \fD$, and that 
$B = [\alpha_\ell, \alpha_u] \times [1, \beta_u] \subset \fD$, where 
$\gamma_u = (\alpha_u, \beta_u) = \gamma + (\delta_*, \delta_*)$  
and $\alpha_\ell = \alpha - \delta_*$
for a small $\delta_* > 0$.

Let $\widetilde{\cL}_\gamma$ denote 
the transfer operator associated to 
$T_\gamma$ and $\widetilde{m}$, where 
$\widetilde{m}$ denotes the 
measure 
defined by 
$
\widetilde{m}(A) = \int_{A} g \, dm
$
with $g(x) = x^{ \frac{1}{\beta} - 
\alpha - 1 }$. For $\varphi \in L^1(\widetilde{m})$
we have
$$
\widetilde{\cL}_\gamma^n(\varphi) = 
g^{-1} \cL_\gamma^n ( 
    g \varphi
) \quad \forall n \ge 1.
$$

The following extension of Lemma \ref{lem:memory_loss}
is derived using similar techniques to those found in \cite{nicol2021large, bunimovich2023maximal, su2022vector}.

\begin{lem}\label{lem:ml_in_lp}  \phantom{This text will be invisible}
	
	\begin{itemize}
		\item[(i)] 	If $\varphi : [0,1] \to \bR$ is bounded, then 
		for $a \ge a_0(\gamma)$, 
		$$
		\Vert \widetilde{\cL}_\gamma^n \varphi  
		\Vert_{L^\infty(\widetilde{m})} \le 
		a 
		\Vert \varphi \Vert_{L^\infty(\widetilde{m})} 
		\quad \forall n \ge 0.
		$$
		\item[(ii)] Let $\ve > 0$ and $p > 1$. Then, the following holds for any 
		sufficiently small $\delta_* = \delta_*(\gamma, \ve, p) > 0$, and any 
		 $\gamma_0 = (\alpha_0, \beta_0) 
		\in B$, $\gamma_1 = (\alpha_1, \beta_1) \in B$: 
		 there 
		exists a constant $C$ determined by 
		$a, \gamma_0, p,\ve, \gamma, \delta_*$ such that for all 
		$\phi, \psi \in 
		\cC_a(\gamma_0)$ with $m(\phi) = m(\psi)$
		and $a \ge a_0(\gamma_u)$, 
		\begin{align}\label{eq:ml_lp_fast}
			\Vert \widetilde{\mathcal{L}}_{\gamma_1}^n 
			[g^{-1}( \phi - \psi   ) ]
			\Vert_{L^p( \widetilde{m})}
			\le 
			C ( m(\psi) + 
			m(\phi) ) n^{-  \frac{1}{p + \ve}   \gamma_*},
		\end{align} 
		where 
		$$
		\gamma_* = \tfrac{1}{\alpha_u}(
		\tfrac{1}{\beta_u} - \alpha_0 ).
		$$
		Moreover, there exists a constant 
		$C' > 0$ determined by $a, p,\ve, \gamma, \delta_*$ such that 
		\begin{align}\label{eq:ml_lp_slow}
			\Vert \widetilde{\mathcal{L}}_{\gamma_1}^n 
			[g^{-1}( \phi - \psi   ) ]
			\Vert_{L^p( \widetilde{m})}
			\le 
			C' ( m(\psi) + 
			m(\phi) ) n^{  \frac{1}{p + \ve}
				( 1 - \frac{1}{\alpha_u\beta_u} ) }.
		\end{align}
	\end{itemize}
\end{lem}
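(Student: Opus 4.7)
The plan is to conjugate everything back to $\cL_\gamma$ via the identity
\[
\tilde{\cL}_\gamma^n \varphi = g^{-1} \cL_\gamma^n(g\varphi).
\]
A preliminary observation, valid for every $f \in \cC_a(\gamma_u)$, is that the decreasing property combined with $\int_0^x f \le a x^{1/\beta_u - \alpha_u} m(f)$ gives the pointwise estimate $f(x) \le a\, m(f)\, g(x)$, since $x f(x) \le \int_0^x f(t)\,dt$. In particular, $g$ itself lies in $\cC_a(\gamma_u)$ for all $a \ge 1$ (positivity, monotonicity, increase of $x^{\alpha_u+1}g(x) = x^{1/\beta_u}$, and $\int_0^x g = m(g) x^{1/\beta_u - \alpha_u}$ are all immediate), so Lemma \ref{lem:inv_of_lsv_cone} together with $m$-invariance of integrals yields $\cL_\gamma^n g \in \cC_a(\gamma_u)$ with $m(\cL_\gamma^n g) = m(g)$, and hence $\cL_\gamma^n g(x) \le a\, m(g)\, g(x)$ for every $n$ and $x \in (0,1]$.

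For the $L^\infty$ bound, since $g > 0$ on $(0,1]$ we have $L^\infty(\tilde m) = L^\infty(m)$, and the conjugation identity immediately gives
\[
|\tilde\cL_\gamma^n \varphi(x)| \le \|\varphi\|_\infty\, g(x)^{-1} \cL_\gamma^n g(x) \le a\, m(g)\, \|\varphi\|_{L^\infty(\tilde m)},
\]
which yields the claim after absorbing $m(g) = (1/\beta_u - \alpha_u)^{-1}$ into the constant $a$.

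For the $L^p$ bounds I would follow the interpolation trick of Nicol, Perreira and T\"or\"ok. Using $\tilde\cL_\gamma^n[g^{-1}(\phi-\psi)] = g^{-1} \cL_\gamma^n(\phi-\psi)$, we rewrite
\[
\|\tilde\cL_\gamma^n[g^{-1}(\phi-\psi)]\|_{L^p(\tilde m)}^p = \int_0^1 |\cL_\gamma^n(\phi-\psi)|^p g^{1-p} \, dm.
\]
Since $\phi, \psi \in \cC_a(\gamma_0) \subset \cC_a(\gamma_u)$, Lemma \ref{lem:inv_of_lsv_cone} gives $\cL_\gamma^n\phi, \cL_\gamma^n\psi \in \cC_a(\gamma_u)$, so the preliminary cone bound yields $|\cL_\gamma^n(\phi-\psi)(x)| \le a(m(\phi)+m(\psi)) g(x)$. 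Factoring $|\cL_\gamma^n(\phi-\psi)|^p$ as $|\cL_\gamma^n(\phi-\psi)|^{p-1} \cdot |\cL_\gamma^n(\phi-\psi)|$ and bounding the first factor by $[a(m(\phi)+m(\psi)) g(x)]^{p-1}$ makes the weight $g^{1-p}$ cancel exactly, leaving
\[
\|\tilde\cL_\gamma^n[g^{-1}(\phi-\psi)]\|_{L^p(\tilde m)}^p \le a^{p-1} (m(\phi)+m(\psi))^{p-1} \|\cL_\gamma^n(\phi-\psi)\|_{L^1(m)}.
\]
Invoking \eqref{eq:ml_strong} and \eqref{eq:ml_weak} of Lemma \ref{lem:memory_loss} and taking $p$-th roots yields \eqref{eq:ml_lp_fast} and \eqref{eq:ml_lp_slow} respectively; the extra factor $(m(\phi)+m(\psi))^{(p-1)/p}$ combines with the $1/p$-th power of the $L^1$ bound to produce the desired linear dependence on $m(\phi)+m(\psi)$.

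I do not anticipate any genuine analytical obstacle: the whole argument is a clean application of the change-of-measure device, relying only on the cone inequality $f \le a\, m(f)\, g$ and the $L^1$ memory-loss bounds already at our disposal. The only care needed is in tracking the dependence of constants on $p$, $a$, $B$, and $\gamma_0$, which is straightforward bookkeeping.
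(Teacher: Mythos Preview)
Your proposal is correct and follows essentially the same route as the paper: conjugate to $\cL_\gamma$, use the cone inequality $f \le a\, m(f)\, g$ for $f \in \cC_a(\gamma_u)$ to get an $L^\infty$ control, and then interpolate between $L^\infty$ and the $L^1$ memory-loss bounds of Lemma~\ref{lem:memory_loss}. The only cosmetic difference is that the paper phrases the interpolation abstractly as $\|\cdot\|_{L^p} \le \|\cdot\|_{L^1}^{1/p}\|\cdot\|_{L^\infty}^{1-1/p}$ and then substitutes $\varphi = g^{-1}(\phi-\psi)$, whereas you unfold the integral directly; your observation about the stray factor $m(g)$ is also well taken.
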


\begin{proof} (i) If $\varphi : [0,1] \to \bR$ 
	is bounded and $x \in (0,1]$,
	\begin{align*}
		| \widetilde{\cL}_\gamma^n \varphi(x) |
		= | g^{-1} \cL_\gamma^n( g \varphi)(x) |
		\le  \Vert \varphi 
		\Vert_{L^\infty(\widetilde{m})}
		g^{-1} \cL_\gamma^n( g )(x).
	\end{align*}
	Note that $g \in \cC_a(\gamma)$ 
	for $a \ge a_0(\gamma_u)$. 
	It 
	follows by Lemma 
	\ref{lem:inv_of_lsv_cone} that 
	$\cL_\gamma^n( g ) \in \cC_a(\gamma)$. Thus, 
	$$
	| \widetilde{\cL}_\gamma^n \varphi(x) | 
	\le \Vert \varphi \Vert_{L^\infty(\widetilde{m})} 
	g^{-1}(x) \cdot a x^{\frac{1}{\beta} - 
		\alpha - 1}
	\le a \Vert \varphi \Vert_{L^\infty(\widetilde{m})}.
	$$
	
\noindent (ii) Since $\phi, \psi \in \cC_a(\gamma_0) \subset \cC_a(\gamma_u)$,
\begin{align*}
	| \widetilde{\mathcal{L}}_{\gamma_1}^n 
	[g^{-1}( \phi - \psi   ) ](x) | 
	\le g^{-1}(x) ( | \widetilde{\mathcal{L}}_{\gamma_1}^n(\phi)| +     | \widetilde{\mathcal{L}}_{\gamma_1}^n(\psi)|   ) (x)
	\le 2 a m(\phi ) x^{ - \delta_* (   1 + \frac{1}{ \beta ( \beta + \delta_* ) }  )  }.
\end{align*}
Hence, for any $q \ge 1$, 
\begin{align*}
	\int_0^1	| \widetilde{\mathcal{L}}_{\gamma_1}^n 
	[g^{-1}( \phi - \psi   ) ] |^q  \, d \widetilde{m} 
	\le (2  a m ( \phi ))^q  \int_0^1 x^{ \frac{1}{\beta} - \alpha - 1 - q \delta_*  (   1 + \frac{1}{ \beta ( \beta + \delta_* ) }  )   } \, dx < \infty,
\end{align*}
whenever $\delta_*$ is sufficiently small. Next, interpolation is used to obtain
\begin{align*}
	\Vert \widetilde{\mathcal{L}}_{\gamma_1}^n 
	[g^{-1}( \phi - \psi   ) ] \Vert_{L^p(\widetilde{m})} 
	\le 	\Vert \widetilde{\mathcal{L}}_{\gamma_1}^n 
	[g^{-1}( \phi - \psi   ) ] \Vert_{L^1(\widetilde{m})}^{ \frac{1}{p + \ve }  } 
		\Vert \widetilde{\mathcal{L}}_{\gamma_1}^n 
	[g^{-1}( \phi - \psi   ) ] \Vert_{L^q(\widetilde{m})}^{1 -  \frac{1}{p + \ve }  },
\end{align*}
where 
$
\tfrac1p = \tfrac{1}{p + \ve} + \tfrac{ 1 - 1/(p + \ve) }{q}
$. For sufficiently small $\delta_*$ depending on $\gamma, \ve, p$, we have 
\begin{align}\label{eq:lp_l1}
\Vert \widetilde{\mathcal{L}}_{\gamma_1}^n 
[g^{-1}( \phi - \psi   ) ] \Vert_{L^p(\widetilde{m})} 
&\le  ( a m(\phi) )^{ 1 - \frac{1}{p + \ve} } C	\Vert \widetilde{\mathcal{L}}_{\gamma_1}^n 
[g^{-1}( \phi - \psi   ) ] \Vert_{L^1(\widetilde{m})}^{ \frac{1}{p + \ve }  } \notag \\ 
&\le  ( a m(\phi) )^{ 1 - \frac{1}{p + \ve} } C	\Vert \cL_{\gamma_1}^n 
 ( \phi - \psi    ) \Vert_{L^1(m)}^{ \frac{1}{p + \ve }  },
\end{align}
for some constant $C = C(p,\ve, \gamma, \delta_*) > 0$. Hence, by \eqref{eq:ml_strong},
\begin{align*}
	\Vert \widetilde{\mathcal{L}}_{\gamma_1}^n 
	[g^{-1}( \phi - \psi   ) ] \Vert_{L^p(\widetilde{m})}  \le 
	C' m(\phi)  n^{ - \frac{1}{p + \ve}    \frac{1}{\alpha_u} (  \frac{1}{\beta_u} - \alpha_0  )  },
\end{align*}
for some constant $C = C(a, \gamma_0, p,\ve, \gamma, \delta_*) > 0$. This proves \eqref{eq:ml_lp_fast}.  
Similarly we obtain  \eqref{eq:ml_lp_slow} by applying \eqref{eq:ml_weak} instead of \eqref{eq:ml_strong}.
	
\end{proof}

\subsection{Proof of Theorem 
\ref{thm:main} for $L^q$-observables}
Let $\gamma \in \mathfrak{D}$, and 
let $\phi : [0,1] \to \bR$ be a measurable 
    function such that 
    \begin{align*}
    \int_0^1 | \phi(x) |^q x^{\frac{1}{\beta} - 
    \alpha
    - 1} \, dx < \infty
    \end{align*}
    holds for some 
    $q > q_0 := \tfrac{1}{
        1 - \alpha \beta}$. This 
        means that $\phi \in L^q( \widetilde{m} )$ 
        if we 
        set $g(x) = x^{ \frac{1}{\beta} - 
        \alpha- 1 }$ in the definition of 
        $\widetilde{m}$.
In this situation, Theorem \ref{thm:main} can be proved 
by an argument similar to the one for 
$\phi \in L^\infty(m)$, but applying Lemma 
\ref{lem:ml_in_lp} together with 
H\"{o}lder's inequality instead of 
Lemma \ref{lem:memory_loss}. We 
provide details below about what has to be changed 
from the proof for $\phi \in L^\infty(m)$. 

Let $\gamma_u = (\alpha_u, \beta _u) = \gamma + (\delta_*, \delta_*) \in \fD$ for a small but fixed $\delta_* > 0$, and 
set $B = [\alpha_\ell, \alpha_u] \times [1, \beta_u] = [\alpha - \delta_*, \alpha + \delta_*] \times [1, \beta + \delta_*]$.
Let $p$ denote the H\"{o}lder conjugate of $q$. 
Note that  
$$
q > q_0 \iff p < \tfrac{1}{\alpha\beta} 
\iff 1  < \tfrac{1}{p}\tfrac{1}{\alpha\beta}.
$$
First, we verify that $D_i$ is still well-defined 
for $i \in \{1,2\}$. By a change of measure 
followed by an application of 
H\"{o}lder's inequality,  we see that 
\begin{align*}
    \sum_{k=0}^{\infty} \int_0^1
    | \phi \cdot \cL_{\gamma}^k[ ( X_{\gamma, i} 
    \cN_{\gamma,i}
    (h_{\gamma})  )'  ] | \, dm 
    &= \sum_{k=0}^{\infty} \int_0^1
    | \phi \cdot 
    \widetilde{\cL}_{\gamma}^k[ g^{-1} 
        ( X_{\gamma, i} \cN_{\gamma,i}
    (h_{\gamma})  )'  ] | \, d\widetilde{m} \\ 
    &\le 
    \sum_{k=0}^\infty
    \Vert \phi \Vert_{L^q( \widetilde{m} )}
    \Vert
    \widetilde{\cL}_{\gamma}^k[ g^{-1} 
        ( X_{\gamma, i} \cN_{\gamma,i}
    (h_{\gamma})  )'  ] \Vert_{L^p( 
        \widetilde{m} )}.
  \end{align*}
By Lemma \ref{lem:D_well_defined}, 
$( X_{\gamma, i} \cN_{\gamma, i}
(h_{\gamma})  )'$ can be written as a 
difference of two functions in $\cC_a( \delta_* , \beta)$.
Making $\delta_*$ smaller if necessary, Lemma \ref{lem:ml_in_lp}  allows 
us to derive the estimate 
$$
\sum_{k=0}^{\infty} \int_0^1
    | \phi \cdot \cL_{\gamma}^k[ ( X_{\gamma, i} 
    \cN_{\gamma,i}
    (h_{\gamma})  )'  ] | \, dm \le C
\Vert \phi \Vert_{L^q( \widetilde{m} )}
\sum_{k=0}^\infty 
k^{ - \frac{1}{p + \ve } \gamma_* },
$$
where 
$
\gamma_* = \tfrac{1}{\alpha_u}( \tfrac{1}{\beta_u} - \delta_* )
$
and $\ve > 0$ can be chosen to be arbitrarily small. Since $1  < \tfrac{1}{p}\tfrac{1}{\alpha\beta}$, 
we can guarantee $\sum_{k=0}^\infty 
k^{ -  \frac{1}{p + \ve} \gamma_* } < \infty$ by  choosing $\ve$ and $\delta_*$
sufficiently small, and so it follows that $D_i$ is absolutely 
summable for $i \in \{1,2\}$.

Suppose that $\Vert \delta \Vert  \le \delta_*$.
Again, by a 
change of measure followed by applications 
of H\"{o}lder's inequality and 
Lemma \ref{lem:ml_in_lp}, we see that for arbitrarily small $\ve$, 
\begin{align*}
  \cR_\phi(\gamma + \delta) -  
  \cR_\phi(\gamma )
  = \int_0^1 \phi \circ T^k_{\gamma + \delta} \, dm 
  - \int_0^1 \phi \circ T^k_\gamma \, dm
  + O ( \Vert \phi  
  \Vert_{L^q(\widetilde{m})} 
  k^{ \frac{1}{p + \ve } ( 1- \frac{1}{\alpha_u 
  \beta_u}  ) })
\end{align*}
for all $k \ge 1$, 
where the constant in the error term 
is determined by $p,\ve, \gamma, \delta_*$ . Let $\xi > 0$. 
Then there exists a constant 
$\mathfrak{c} = \mathfrak{c}(p,\ve, \gamma, \delta_*) > 0$, 
such that for 
$$
k \ge  k_0(\xi, \delta, B,  p) 
:= \lceil \mathfrak{c} \Vert \delta \Vert^{- (1 + \xi) (p + \ve )
\frac{\alpha_u\beta_u}{1 - \alpha_u\beta_u}} \rceil
$$
we have 
\begin{align*}
  \frac{ \cR_\phi(\gamma + \delta) -  
  \cR_\phi(\gamma ) - D \cdot \delta}{ \Vert \delta  \Vert}
  = P_k + O(  \Vert \phi  
  \Vert_{L^q(\widetilde{m})} 
  \Vert \delta  \Vert^\xi ),
\end{align*}
where $P_k$ is as in \eqref{eq:p_k}.
Now, to obtain 
$
\cR_\phi(\gamma + \delta) -  
\cR_\phi(\gamma ) - D \cdot \delta
= o( \Vert \delta \Vert )
$ as $\delta \to 0$, we only 
need to show that the terms 
$Q_k^{(i)}$ and $R_k^{(i)}$ ($i=1,2$)
in the decomposition 
\eqref{eq:decomp_pk} vanish as $\delta \to 0$.

\subsubsection{Limit of $R_k^{(i)}$} Recall that 
\begin{align*}
&R_k^{(i)}
  = \frac{
    \sum_{j=0}^{k-1} \int_{0}^1 \int_{0}^1
    \phi \cdot \cL_{\gamma + \delta}^j \partial_i^2 \cL_{\gamma + \delta t}(\varphi_j)
     \, dm \cdot (1 - t) \delta_i^2 \, dt  
  }{ \Vert \delta \Vert}.
\end{align*}
By \eqref{eq:decomp_rk}, if $\delta_*$ 
is sufficiently small, there exists a decomposition 
$$
\partial_i^2 \cL_{\gamma + \delta }(\varphi_j)
= (\psi_1^{(\delta)} - m(\psi_1^{(\delta)}) - (\psi_2^{(\delta)} - m(\psi_2^{(\delta)})) , 
$$
where $\psi_i^{(\delta)} \in 
\cC_{a}(  \delta_* , \beta_u )$ 
and $\Vert \psi_i^{(\delta)} \Vert_{L^1(m)} 
\le C_i(a, b_1, b_2, b_3, \gamma, \delta_*) < \infty$ 
holds for all $\delta$ 
with $\Vert  \delta \Vert  \le \delta_*$. 
Then, a change of measure, combined with applications 
of H\"{o}lder's inequality and 
Lemma \ref{lem:ml_in_lp}, yields
\begin{align*}
    &\int_{0}^1
    |\phi \cdot \cL_{\gamma + \delta}^j \partial_i^2 \cL_{\gamma + \delta t}(\varphi_j)
     | \, dm 
     = \int_{0}^1
     |\phi \cdot  \widetilde{\cL}_{\gamma + 
     \delta}^j  g^{-1}
     \partial_i^2 \cL_{\gamma + \delta t}(\varphi_j)
      | \, d \widetilde{m} \\
    &\le 
    \Vert \phi \Vert_{L^q( \widetilde{m} )}
    \Vert
    \widetilde{\cL}_{\gamma + 
    \delta}^j  g^{-1}
    \partial_i^2 \cL_{\gamma + \delta t}(\varphi_j)
    \Vert_{L^p( \widetilde{m} )} 
    \le C
    \Vert \phi \Vert_{L^q( \widetilde{m} )}
    j^{ - \frac{1}{p + \ve} \gamma_* },
\end{align*}
where $C = C(a, b_1, b_2, b_3, \gamma, p, \ve, \delta_*) > 0$ is a 
constant and 
$$
\gamma_* =  \tfrac{1}{\alpha_u}(
        \tfrac{1}{\beta_u} - \delta_* ).
$$
Since $\tfrac{1}{p} \tfrac{1}{\alpha\beta} > 1$, 
for sufficiently small $\ve$ and $\delta_*$ we have $\tfrac{1}{p + \ve } \tfrac{1}{\alpha_u}(
\tfrac{1}{\beta_u} - \delta_* ) > 1$ so that the obtained upper bound is summable with respect to $j$. 
Hence, for $\phi \in L^q( \widetilde{m} )$ we still have 
$\lim_{\delta \to 0 } R_k^{(i)} = 0$. 

\subsubsection{Limit of $Q_k^{(i)}$} We want to show that 
\begin{align*}
    |Q_k^{(i)}|
    &\le \biggl| \sum_{j=0}^{k-1}  \int_0^1 \phi \cdot 
    \cL_{\gamma + \delta}^j \partial_i \cL_\gamma
     ( \varphi_j ) \, dm - D_i  \biggr| \to 0, 
  \end{align*}
as $\delta \to 0$, where $
\varphi_j = \cL_{\gamma}^{k-j}( \mathbf{1} )
$. 

As in the case of $D_i$ above, we 
we see that 
$\sum_{j=0}^{k-1}  \int_0^1 \phi \cdot 
\cL_{\gamma + \delta}^j \partial_i \cL_\gamma
 ( \varphi_j ) \, dm$ is absolutely summable.
It follows that, for $\widetilde{\ve} > 0$, there exists 
$N = N( \widetilde{\ve} ) > 0$ such that  
\begin{align*}
    &\biggl| \sum_{j=0}^{k-1}  \int_0^1 \phi \cdot 
    \cL_{\gamma + \delta}^j \partial_i \cL_\gamma
     ( \varphi_j ) \, dm - D_i \biggr| \\
    &\le \biggl| \sum_{j=0}^{N} \int 
    \phi \cdot  ( \widetilde{\cL}_{\gamma + \delta}^j 
    - \widetilde{\cL}_\gamma^j  ) 
    [ g^{-1}
      (X_{\gamma,i} \cN_{\gamma,i}(\varphi_j))'
    ] \, d\widetilde{m}  \biggr|
      \\
    &+ \biggl| \sum_{j=0}^{N} 
    \int_0^1
    \phi \cdot \widetilde{\cL}_{\gamma}^j[ 
        g^{-1}
        (X_{\gamma,i} \cN_{\gamma,i}(\varphi_j))'
        -   
    g^{-1}( X_{\gamma, i} 
    \cN_{\gamma,i}
    (h_{\gamma})  )'  ] \, d\widetilde{m} 
    \biggr| + \tfrac{\widetilde{\ve}}{2},
  \end{align*}
whenever $\Vert \delta \Vert$  is sufficiently small, i.e. 
$k$ is sufficiently large.
Hence, for 
$\lim_{\delta \to 0} Q_k^{(i)} = 0$ it suffices 
to establish the following two properties 
(for a redefined $\widetilde{\ve}$ depending on $\Vert \phi \Vert_{L^q(\widetilde{m})}$ 
and $N$):
\begin{align}\label{eq:to_show_last_1_lq}
    \Vert ( \widetilde{\cL}_{\gamma + \delta}^j 
    - \widetilde{\cL}_\gamma^j  ) 
    [ g^{-1}
      (X_{\gamma,i} \cN_{\gamma,i}(\varphi_j))'
     ]
    \Vert_{L^p( \widetilde{m})}
    \le \widetilde{\ve}
    \quad  \forall  \, 0 \le j \le N,
  \end{align}
  and 
  \begin{align}\label{eq:to_show_last_2_lq}
    \Vert
    \widetilde{\cL}_{\gamma}^j[
   g^{-1} (X_{\gamma,i} \cN_{\gamma,i}(\varphi_j))'
    -   
    g^{-1} ( X_{\gamma,i} 
    \cN_{\gamma,i}
    (h_{\gamma})  )' ]
    \Vert_{L^p( \widetilde{m} )} \le \widetilde{\ve}    
    \quad  \forall  \, 0 \le j \le N.
  \end{align}
For
convenience, we denote $\psi = 
(X_{\gamma,i} \cN_{\gamma,i}(\varphi_j))'$ and 
$\widehat{\psi} = 
(X_{\gamma,i} \cN_{\gamma,i}(h_\gamma))'$.
For \eqref{eq:to_show_last_1_lq}, 
since 
$\psi$ 
can be written as a difference of two functions in 
$C_a(\gamma)$, we can argue as in \eqref{eq:lp_l1} 
to obtain 
\begin{align*}
&\Vert ( \widetilde{\cL}_{\gamma + \delta}^j 
    - \widetilde{\cL}_\gamma^j  ) 
     g^{-1}
      \psi
    \Vert_{L^p( \widetilde{m})} 
    \le C
    \Vert ( \widetilde{\cL}_{\gamma + \delta}^j 
    - \widetilde{\cL}_\gamma^j  ) 
    g^{-1}
    \psi
    \Vert_{L^1( \widetilde{m})}^{\frac{1}{p + \ve }}  
    = C
    \Vert ( \cL_{\gamma + \delta}^j 
    - \cL_\gamma^j  ) 
    \psi
    \Vert_{L^1(m)}^{\frac{1}{p + \ve }}  
\end{align*}
for some constant $C$ and $\ve > 0$
whenever $\Vert \delta \Vert \le \delta_*$ is sufficiently small. Hence, 
\eqref{eq:to_show_last_1_lq} follows by 
\eqref{eq:to_show_last_1}.

For \eqref{eq:to_show_last_2_lq}, since 
$g^{-1} \psi$ is and $g^{-1} \widehat{\psi}$ 
are bounded by Lemma \ref{lem:D_well_defined}, we can use item (i)
of
Lemma \ref{lem:ml_in_lp} to derive 
the upper bound 
\begin{align*}
\Vert
    \widetilde{\cL}_{\gamma}^j(
   g^{-1} \psi
    -   
    g^{-1} \widehat{\psi} \: )
    \Vert_{L^p( \widetilde{m} )}
&\le \Vert
\widetilde{\cL}_{\gamma}^j(
g^{-1} \psi
-   
g^{-1} \widehat{\psi} \: )
\Vert_{L^\infty( \widetilde{m} )}^{1 - \frac1p} 
\Vert
    \widetilde{\cL}_{\gamma}^j(
   g^{-1} \psi
    -   
    g^{-1} \widehat{\psi} \: )
    \Vert_{L^1( \widetilde{m} )}^{\frac1p} \\
&\le C\Vert
\widetilde{\cL}_{\gamma}^j(
g^{-1} \psi
-   
g^{-1} \widehat{\psi} \: )
\Vert_{L^1( \widetilde{m} )}^{\frac1p} \\ 
&= C\Vert
\cL_{\gamma}^j(
 \psi
-   
\widehat{\psi} \: )
\Vert_{L^1( m )}^{\frac1p} 
\le 
C\Vert
 \psi
-   
\widehat{\psi} 
\Vert_{L^1( m )}^{\frac1p},
\end{align*}
where the $L^1$ contraction property of 
$\cL_\gamma$ was used, and $C > 0$ 
is a constant determined by $a, b_1, \gamma, p$.
Now, \eqref{eq:to_show_last_2_lq} follows by 
\eqref{eq:to_show_last_2}.

\newpage

\bigskip
\bigskip
\bibliography{intermittent-critical}{}
\bibliographystyle{plainurl}


\vspace*{\fill}

\end{document}